\newfont{\bb}{msbm10 at 12pt}
\newfont{\bbp}{msbm10 at 10pt}
\def\r{\hbox{\bb R}}
\def\n{\hbox{\bb N}}
\def\np{\hbox{\bbp N}}
\def\zz{\hbox{\bb Z}}
\def\s{\hbox{\bb S}}
\def\amb{\mathcal{M}}
\def\anb{\mathcal{N}}
\newcommand{\cR}{{\color{red}\blacksquare}}
\newcommand{\cB}{{\color{blue}\blacklozenge}}
\newcommand{\campo}{\mathfrak{X}}
\newcommand{\camb}{\overline{\nabla}}
\newcommand{\abs}[1]{\left\vert #1 \right\vert}
\newcommand{\set}[1]{\left\{#1\right\}}
\newcommand{\meta}[2]{\langle #1,#2 \rangle }
\newcommand{\Kb}{\overline{K}}
\newcommand{\Rb}{\overline{R}}
\newcommand{\Ricb}{\overline{Ric}}
\newcommand{\Sb}{\overline{Sc}}
\newcommand{\derb}[3]{\left.\frac{d #1}{d #2}\right\vert _{#3} }
\numberwithin{equation} {section}
\newtheorem{lemma}{Lemma}[section]
\newtheorem{remark}{Remark}[section]
\newtheorem{definition}{Definition}[section]
\newtheorem{theorem}{Theorem}[section]
\newtheorem{proposition}{Proposition}[section]
\theoremstyle{}
\begin{document}

\begin{center}
\rule{15cm}{1.5pt} \vspace{.3cm}

{\Large \bf  Frankel Property, Maximum Principle at Infinity\\[3mm]  and Splitting Theorems\\[3mm]  for complete minimal hypersurfaces} 

\vspace{0.5cm}
{\large Jos$\acute{\text{e}}$ M. Espinar$\,^\dag$, Harold Rosenberg$\,^\ddag$}\\ 
\vspace{0.3cm} \rule{15cm}{1.5pt}
\end{center}

\begin{center}
{\footnotesize $^{\dag}$Departamento de Geometr\'ia y Topolog\'ia, Facultad de Ciencias, Universidad de Granada, Avenida Fuentenueva s/n, 18001, Granada, Spain\\Email: jespinar@ugr.es\\
$^{\ddag}$Email: rosenbergharold@gmail.com}
\end{center}

\begin{abstract} 
In this paper, we study complete minimal hypersurfaces in Riemannian \(n\)-manifolds \(\amb^n\) for dimensions \(4 \leq n \leq 7\), and we obtain some results in the spirit of known work for $n=3$.

Key contributions include extending the work of Anderson and Rodríguez to higher dimensions. Specifically, we show that in four dimensional manifolds with nonnegative sectional curvature and positive scalar curvature, two disjoint properly embedded minimal hypersurfaces bound a slab isometric to the product of one hypersurface with an interval. 

Our results are grounded in a maximum principle at infinity for two-sided, parabolic, properly embedded minimal hypersurfaces in complete Riemannian manifolds of bounded geometry, generalizing the work of Mazet in dimension three to higher dimensions. We also leverage the recent classification of complete two-sided stable minimal hypersurfaces by Chodosh, Li, and Stryker.
\end{abstract}

{\bf Key Words:} Minimal hypersurfaces; Area-minimizing hypersurfaces; Rigidity of $n-$manifolds with boundary; Frankel property; Maximum Principle at infinity.

{\bf 2020 MSC:} 53A10, 53C24

\section{Introduction}

In 1897, J. Hadamard  \cite{JHad97} proved that two complete and embedded geodesics on a strictly convex $2-$sphere must intersect if one of them is compact. The closure of an embedded complete geodesic is a geodesic lamination. If a geodesic lamination of a strictly convex sphere were not one compact geodesic, then it would contain a stable geodesic.  Using the formula for the second variation of arc length and a log-cut-off trick, one sees there is no such stable geodesic.  Hence an embedded complete geodesic on such a $2-$sphere (e.g., an ellipsoid) is a compact (circle) geodesic. In the Appendix, we provide another proof of this using the Gauss-Bonnet Theorem. When the ambient dimension is three, we have:

\begin{quote}
{\bf Theorem \cite{HRos01}.} {\it Let $\amb ^3$ be a closed orientable manifold with positive Ricci curvature. If $\Sigma$ is an injectively immersed complete orientable minimal surface of bounded curvature in $\amb ^3$, then $\Sigma$ is compact.}
\end{quote}

The idea behind these theorems is that when the closure of a minimal hypersurface $\Sigma$ is a minimal lamination $\mathcal L$, then $\Sigma$ is properly embedded or there is a limit leaf $L \in \mathcal L$ ($L$ maybe $\Sigma$) that is stable. To exclude $L$, we might ask ourself three different questions:
\begin{itemize}
\item When do two minimal hypersurfaces necessarily intersect?
\item When do two minimal hypersurfaces satisfy a Maximum Principle at Infinity: if $\Sigma _1\cap \Sigma _2 = \emptyset$ then the distance between $\Sigma _1 $ and $\Sigma _2$ is bounded away from zero? 
\item When does the ambient geometry determine the non-existence of stable two-sided minimal hypersurfaces?
\end{itemize}

T. Frankel \cite{TFra66} considered a complete $n-$manifold $\amb ^n$ with positive Ricci curvature and two minimal hypersurfaces $\Sigma _1$ and $\Sigma _2$ in $\amb ^n$, $\Sigma _1$ closed (compact with no boundary), $\Sigma _2$ immersed and $ \Sigma _2$ proper ($\overline{\Sigma}_2 = \Sigma _2$). He proved $\Sigma _1 \cap \Sigma _2 \neq \emptyset $. C. Croke and B. Kleiner \cite{CCroBKle92} considered two closed orientable embedded minimal hypersurfaces in an orientable complete $\amb ^n$ with non-negative Ricci curvature. They proved that if $\Sigma _1$ does not intersect $\Sigma _2$ then they are both totally geodesic and either (i) or (ii) holds: 
\begin{enumerate}
\item[(i)] $\Sigma _1 \cup \Sigma _2$ bounds a domain in $\amb $ isometric to $\Sigma _1 \times [0, {\bf d}]$, with the product metric, ${\bf d}:= {\rm dist}_{\amb}(\Sigma _1, \Sigma_2)$;
\item[(ii)] $\amb ^n$ is a mapping torus; i.e., $\amb $ is isometric to $\Sigma _1 \times [0, {\bf d}]$, with the product metric, where $\Sigma _1 \times \set{0}$ is identified with $\Sigma _1 \times \set{{\bf d}}$ by an isometry.
\end{enumerate}

A property that obliges minimal submanifolds to intersect is now known as a Frankel property. Thus the space of closed minimal hypersurfaces in $\amb ^n$ with positive Ricci curvature satisfies the Frankel property and, nowadays, this is well understood if the ambient manifold has non-negative Ricci curvature and one of the minimal hypersurfaces is compact. When the minimal hypersurfaces are complete and non-compact we refer to \cite{JChoAFra18} for a detailed exposition and references therein

Hoffman and Meeks \cite{DHofWMee90} proved that a connected, proper, possibly branched, nonplanar minimal surface $\r ^3$ is not contained in a halfspace, their {\it Half-space Theorem}. They also proved that if $\Sigma _1$ and $\Sigma _2$ are two properly immersed minimal surfaces in the Euclidean three-space $\mathbb{R}^3$ such that $\partial \Sigma _1 = \partial \Sigma _2 = \emptyset$, then they are parallel planes; their {\it Strong Half-space Theorem}. To prove, they construct an orientable complete least area surface $L$ between $\Sigma _1$ and $\Sigma _2$; $L$ is stable hence a plane and they conclude with their Half-space Theorem.  Also, Meeks-Rosenberg proved the following Maximum Principle at Infinity:

\begin{quote}
{\bf Theorem \cite{WMeeHRos08}.} {\it Let $\Sigma _1$ be a properly immersed minimal surface in $\r ^3$. Then, if $\Sigma _2$ is a properly immersed minimal surface in $\r ^3 \setminus \Sigma _1 $, then $\Sigma _1$ and $ \Sigma _2$ are parallel planes}
\end{quote}

Many people have proved such Maximum Principles (see \cite{RLanHRos88,LMaz13,WMeeHRos08,ARosHRos10,HRosFSchJSpr13} and references therein), and the most general in ambient dimension three for us was done by L. Mazet \cite{LMaz13}; that we will explain in detail and generalize later for higher ambient dimension. 

Understanding stable minimal hypersurfaces is foundational on the subject. A complete two-sided stable minimal surface in $\r ^3$ is a flat plane \cite{DoCarCPen79,DFisRSch80,APog81}. R. Schoen \cite{RSch83} used this to prove a two-sided stable complete minimal surface $\Sigma$ in a complete $3-$manifold $\amb ^3$ of bounded geometry has bounded curvature, i.e., the sectional curvatures are bounded in absolute value and the injectivity radius has a positive lower bound. That is, there exists a constant $\Lambda >0$, depending on $\amb ^3$, such that if $\Sigma $ is as above, then 
$$ |A(x)|{\rm dist}_{\Sigma}(x , \partial \Sigma) \leq \Lambda  , \text{ for all } x \in \Sigma ,$$here $|A|$ is the norm of the second fundamental form of $\Sigma$. 

\begin{remark}
Schoen's curvature estimate depends also on an upper bound on the covariant derivate of the curvature tensor. H. Rosenberg, R. Souam and E. Toubiana \cite{HRosRSouETou10} obtained such a curvature estimate assuming only that the $3-$manifold $\amb ^3$ is of  bounded geometry.
\end{remark}

These results are, we believe, of the upmost importance for the study of complete minimal surfaces in $3-$manifolds. Recently, O. Chodosh and C. Li \cite{OChoCLi22} proved that an orientable, complete, stable minimal hypersurface in $\r ^4$ is a flat ($\r ^3$) hyperplane (see also \cite{GCatPMasARon22,OChoCLi23,OChoCLiPMinDStr24,HHonZYan24,LMaz24} for alternative proofs and extensions).


Until recently, curvature estimates for stable minimal hypersurfaces in dimensions greater than three depend not only on the distance from the boundary but also on a volume growth hypothesis \cite{RSchLSim81,RSchLSimSYau75}. Thanks to recent results \cite{OChoCLiDStr22,OChoCLiPMinDStr24,LMaz24}, where they proved that two-sided stable minimal hypersurfaces in $\r ^n$, $n=4,5,6$, are hyperplanes, and a blow-up argument, we now have curvature estimates that only depend on the intrinsic distance to the boundary for twos-sided stable minimal hypersurfaces when the ambient manifold has bounded geometry and dimension $n=4,5,6$. It is expected that two-sided stable minimal hypersurfaces in $\r ^7$ are hyperplanes, which would imply curvature estimates for stable hypersurfaces that only depend on the intrinsic distance to the boundary in ambient manifolds of bounded geometry.

M. Anderson and L. Rodr\'iguez \cite{MAndLRod89} proved that the existence of an area-minimizing surface in a complete three-manifold of non-negative Ricci curvature splits the manifold. The idea behind \cite{MAndLRod89} is to construct a sequence of area-minimizing surfaces with two boundary components, one boundary on the original area-minimizing surface and the other boundary small and on a close equidistant surface, using the Douglas Criterion for the Plateau Problem. Then, taking limits when the boundary component on the original area-minimizing goes to infinity and the other component shrinks to a point on the equidistant, they are able to construct another area-minimizing smooth surface, passing through this point. Finally, they prove that this gives a local, and then global, foliation of the manifold. O. Chodosh, M. Eichmair and V. Moraru extended the above result under the weaker condition of non-negative scalar curvature of the ambient three-manifold (see \cite{OChoMEicVMor19} for a state of the art and references therein). In higher dimension, see also \cite{VMor16} when the area-minimizer is compact.

\subsection{Structure of the paper}

In Section \ref{Sect:Prel}, we include the basic notation and definitions we will use along the paper. An important tool in our proofs is to prove that area-minimizing ${\rm mod}(2)$ hypersurfaces have curvature estimates only depending on the ambient manifold and distance to the boundary when $n \leq 7$. This is done in detail in Section \ref{Sect:Mini}.

In Section \ref{Sect:MPI}, we first observe that Mazet's Theorem \cite{LMaz13} can be extended up to dimension seven with the same techniques, thanks to the Schoen-Simon-Yau's curvature estimates \cite{RSchLSim81,RSchLSimSYau75} and the regularity theory for area-minimizers developed by the combined works of De Giorgi, Federer, Fleming, Hardt and Simon (cf. \cite[Section 3]{NWic14}). Parabolicity is a necessary condition in dimension $n \geq 4$, even in Euclidean space, as the example of the catenoid shows. In fact, there are no parabolic, complete, minimal hypersurfaces by the monotonicity formula in $\mathbb{R}^n$, $n\geq 4$. Also, inspired by the Tubular Neighborhood Theorem of Meeks-Rosenberg in Euclidean three-space \cite{WMeeHRos08,HRos01}, we focus on the existence of an embedded $\epsilon -$tube when the ambient manifold has non-negative Ricci curvature. 

In Section \ref{Sect:Spli} we establish the two main new contributions of this paper. We restrict to $4-$manifolds of non-negative sectional curvatures and scalar curvature bounded below by a positive constant. Under these conditions, complete stable two-sided minimal hypersurfaces are totally geodesic and parabolic (cf. \cite{OChoCLiDStr22} and \cite[Theorem 1.2]{OMunJWan22}) which is a fundamental tool in the proof. The existence of a non-separating, properly embedded minimal hypersurface in such manifolds implies a local splitting. 

\begin{quote}
{\bf Theorem \ref{ThNonSeparating}} {\it Let $(\amb ^4, g)$ be a complete orientable $4-$manifold of bounded geometry, nonnegative sectional curvature, and scalar curvature bounded below by a positive constant. If $\Sigma \subset  \amb $ is a properly embedded orientable minimal hypersurface that is not separating, then $\Sigma$ is embedded and $\amb $ is a mapping torus over $\Sigma$.}
\end{quote}

This enables us to prove a Frankel-type property in these manifolds: 

\begin{quote}
{\bf Theorem \ref{CorNonSeparating}} {\it Let $(\amb ^4, g)$ be a complete orientable $4-$manifold of bounded geometry, nonnegative sectional curvature, and scalar curvature bounded below by a positive constant. Let $\Sigma _1, \Sigma _2 \subset  \amb$ be two properly embedded orientable minimal hypersurfaces. Then either $\Sigma _1$ intersects $\Sigma _2$, or $\Sigma _1$ and $\Sigma _2$ bound a region with a product metric.}
\end{quote}

\begin{remark}
The previous theorems can be extended to quasi-embedded minimal hypersurfaces, which will be defined later. In Section \ref{Sect:Spli}, we provide more detailed information about the splitting in the previous results. However, for the sake of clarity in exposition, we have chosen to present a simpler version here.
\end{remark}

In Section \ref{Sect:Para} we focus on area-minimizing hypersurfaces extending the Anderson-Rodr\'iguez Splitting Theorem \cite{MAndLRod89} up to dimension seven. We shall clarify that Anderson \cite{MAnd90} obtained a result like our Theorem \ref{ThAreaMinimizing} under the stronger assumption that the volume growth of geodesic balls of $\amb ^n$ is cubic.

\subsection{Concluding remarks}

An interesting subject for future research is the study of properly embedded minimal and constant mean curvature (CMC) hypersurfaces in \(\mathbb{S}^2 \times \mathbb{R}^2\) with the product metric, where \(\mathbb{S}^2\) is the standard 2-sphere of curvature one, and \(\mathbb{T}^2 \times \mathbb{R}^2\) with the product metric, where \(\mathbb{T}^2\) is the standard flat 2-torus. There are many examples of minimal hypersurfaces in these spaces. We know a great deal about properly embedded minimal surfaces \(\Sigma\) in \(\mathbb{S}^2 \times \mathbb{R}\) and \(\mathbb{T}^2 \times \mathbb{R}\) \cite{WMeeHRos98,WMeeHRos05/2,HRos02}. If \(\Sigma\) has bounded curvature, then \(\Sigma\) has linear area growth. When \(\Sigma\) has finite topology, \(\Sigma\) has bounded curvature. Hence, when \(\Sigma\) has finite topology, \(\Sigma \times \mathbb{R}\) has quadratic area growth in \(\mathbb{S}^2 \times \mathbb{R}^2\) or \(\mathbb{T}^2 \times \mathbb{R}^2\), thus it is parabolic.

Let \(l \subset \mathbb{R}^2\) be a line. Thus, \(\Sigma = \mathbb{S}^2 \times l\) or \(\Sigma = \mathbb{T}^2 \times l\) is a properly embedded parabolic minimal hypersurface to which the techniques of our theorems apply. For example, suppose \(\Sigma\) is a properly embedded minimal hypersurface of bounded curvature in \(\amb^4 := \mathbb{T}^2 \times \mathbb{R}^2\). If \(\Sigma\) does not separate \(\amb\), then \(\amb\) splits. This is proved in the spirit of our Theorem \ref{ThNonSeparating}. Cutting open \(\amb\) along \(\Sigma\) results in a manifold with two boundary components, each a copy of \(\Sigma\), which we call this new manifold $\anb$ with boundary $\partial \anb  = \mathcal S _1 \cup \mathcal S _2$. In $\anb$, we find a least-area minimal hypersurface \(F\) that is orientable and separates $\anb$ into two components, each containing a boundary component of $\anb$. Then, the universal cover of \(F\) is a flat hyperplane by Chodosh-Li's Theorem \cite{OChoCLi22}, therefore \(F\) is flat as well. Proceeding as in the proof of Theorem \ref{ThNonSeparating}, we conclude that $\anb$ is isometric to \(\Sigma \times [0, {\bf d}]\), where \( {\bf d} := \text{dist}_{\anb}(\mathcal{S}_1, \mathcal{S}_2)\). The Half-space theorem applies to \(\Sigma\). This convinced us that the study of properly embedded minimal (and CMC) hypersurfaces in these 4-manifolds is an interesting subject.

We also hope that the proof of Hadamard's Theorem given in the Appendix will generalize to dimension 3, to prove that an embedded complete geodesic in a closed $3-$manifold $\amb ^3$ of strictly positive sectional curvature must intersect any closed totally geodesic surface in $\amb ^3$.

To our knowledge, the Chodosh-Li-Stryker Theorem \cite{OChoCLiDStr22} on complete stable minimal hypersurfaces in positively curved $4-$manifolds has not been extended to either five or six-dimensional positively curved manifolds. Such a generalization would be essential to extend the results of this paper. Additionally, the techniques in this article should be adaptable to CMC hypersurfaces under appropriate conditions in the ambient manifold. Finally, the results that contain Section \ref{Sect:Spli} can be used to obtain topological obstructions to the existence of manifolds with non-negative Ricci curvature (see \cite{ZSheCSor08} and references therein). 

To conclude, regularity issues appear in Section \ref{Sect:Para} in dimensions higher than seven and, for us, it is not clear that our results extend straightforward.

\section{Notation and conventions}\label{Sect:Prel}

Let $(\amb , g)$ be a complete $n$-dimensional manifold, where $g$, or $\meta{\cdot}{\cdot}$, denotes the Riemannian metric on $\amb$. Let $\camb $ be the Levi-Civita connection on $\amb$. First, let us fix the notation. Set 
\begin{equation*}
\Rb (X,Y)Z := \camb _X \camb _ Y Z - \camb _Y \camb _X Z - \camb _{[X,Y]} Z , \, \, X,Y,Z \in \campo (\amb),
\end{equation*}
as the {\it Riemann Curvature Tensor}. Let $\set{e_i} \in \campo (U)$, $U \subset \amb$ open and connected, be a local orthonormal frame of the tangent bundle $TU \subset T \amb$. We denote by $\Rb _{ijkl} = \meta{\Rb(e_i , e_j)e_k}{e_l}$, and the {\it sectional curvatures} are given by $\Kb _{ij} := \meta{\Rb (e_i , e_j)e_j}{e_i} = \Rb _{ijji}$.

Moreover, we define the {\it Ricci curvature}, $\Ricb$, as the trace of the Riemann curvature tensor; and the {\it scalar curvature}, $\Sb$, as the trace of the Ricci tensor. Specifically:
$$ \Ricb (e_i , e_j) = \sum _{k=1}^n \Rb _{ikkj}  \text{ and  } \Sb (p) = \sum _{k=1}^n \Ricb _p(e_k ,e_k) .$$

We denote by ${\rm inj}(\amb)$ the infimum of the injectivity radius at any $p \in \amb$; where ${\rm inj}_\amb (p)$ is the largest $R > 0$ so that the exponential map ${\rm exp}_p : B (R) \subset T_p\amb \to \amb$ is a diffeomorphism onto its image. For $p \in \amb $ and $r>0$, we also denote by $\mathcal B_p (r)$ the metric ball in $\amb$ centered at the point $p \in \amb$ with radius $r>0$, $ \mathcal B_p (r) := \set{ q \in \amb \, : \, \, {\rm dist}_{\amb}(p,q) < r } $. Henceforth, $\mathcal H ^{l}$ denotes the $l$-dimensional Hausdorff measure (for the standard Riemannian measure).

\subsection{Parabolicity and quasi-isometries}

An important notion in this paper is parabolicity. A Riemannian manifold $\amb$ without boundary $\partial \amb = \emptyset$ (resp. with boundary $\partial \amb \neq \emptyset$) is {\it parabolic} (resp. {\it parabolic with boundary}) if every positive superharmonic function $u$ on $\amb$ (resp. with $u = 0$ on $\partial \amb$) must be constant. A crucial property of parabolicity is that it is preserved by quasi-isometries between Riemannian manifolds.

\begin{definition}\label{Def:kIso}
Let $(\amb ,g)$ and $(\mathcal N,h)$ be two $n$-dimensional Riemannian manifolds and
$\Phi : \amb \to \mathcal N $. Given $C \geq 1$, we say that $\Phi$ is a (local) quasi-isometry, or $C$-isometry, if for any $p \in \amb$ and $v \in T_p \amb$ it holds
$$ C^{-1} \, g(v,v) \leq h (d\Phi _p (v), d\Phi _p (v)) \leq C \, g(v,v). $$
\end{definition}

If $\amb$ and $\mathcal N$ have no boundary and $\Phi : \amb \to \mathcal N $ is a quasi-isometry diffeomorphism, $\amb$ is parabolic if and only if $\mathcal N$ is parabolic. Also, if $\partial \amb \neq \emptyset $, $\mathcal N$ parabolic, $\partial \mathcal N = \emptyset $  and $\Phi : \amb \to \mathcal N $ is a $k$-isometry injection, then $\amb$ is parabolic (with boundary). See  \cite[Section 3]{LMaz13} for details on parabolic manifolds and quasi-isometries.

\subsection{Hypersurfaces in manifolds}

Let $\Sigma ^{n-1}\subset \amb ^n$ be a two-sided (possibly with boundary $\partial \Sigma$) hypersurface immersed in a Riemannian manifold $\amb$. We still denote by $g$, or $\meta{\cdot}{\cdot}$, the Riemannian metric induced on $\Sigma$. Let $\camb$ and $\nabla $ be the Levi-Civita connections in $\amb$ and $\Sigma$ respectively. Let us denote by $\mathcal N \Sigma $ the unit normal bundle; that is, 
$$ \mathcal N \Sigma = \set{ (p,v) \, : \, v \in \mathcal N_p \Sigma = T_p \Sigma ^\perp \subset T_p \amb , \, |v| = 1} .$$

Since we are assuming that $\Sigma $ is two-sided, $\mathcal N \Sigma $ is trivial, and set $N : \Sigma \to \mathcal N \Sigma$ as a fixed unit normal. Therefore, by the Gauss Formula, we obtain
$$ \camb _X Y = \nabla _X Y +  \meta{A(X)}{Y}N \, \text{ for all } \, X, Y \in \campo (\Sigma),$$
where $A : T \Sigma \to T \Sigma$ is the Weingarten (or shape) operator, given by $ A(X) := - \camb _X N $. We denote the {\it mean curvature} and {\it extrinsic curvature}, respectively, as
$$ (n-1) H = {\rm Trace}(A) \, \, \,  \text{   and   } \, \, \, K_e = {\rm det}(A).$$

Let $\overline R$ and $R$ denote the Riemann Curvature tensors of $\amb$ and $\Sigma$, respectively. Then, the Gauss Equation states that for all $X,Y,Z,W \in \campo (\Sigma)$ we have
\begin{equation*}
\meta{R(X,Y)Z}{W}= \meta{\overline R (X,Y)Z}{W} + \meta{A(X)}{W} \meta{A(Y)}{Z} - \meta{A(X)}{Z}\meta{A(Y)}{W}.
\end{equation*}

In particular, if $K(X,Y)=\meta{R(X,Y)Y}{X}$ and $\Kb (X,Y) = \meta{\Rb (X,Y)Y}{X}$ denote the sectional curvatures in $\Sigma$ and $\amb$, respectively, of the plane generated by the orthonormal vectors $X,Y \in \campo (\Sigma)$, the Gauss Equation becomes 
\begin{equation*}
K(X,Y) = \Kb (X,Y) + \meta{A(X)}{X}\meta{A(Y)}{Y} - \meta{A(X)}{Y}^2 .
\end{equation*}

We also denote by $Ric$ and $Sc$ the Ricci tensor and scalar curvature of $\Sigma$ with the induced metric. For $p \in \Sigma $ and $r>0$, let $\mathcal D_p (r)$ be the metric ball in $\Sigma$ centered at the point $p \in \Sigma$ with radius $r>0$, $ \mathcal D_p (r) := \set{ q \in \Sigma \, : \, \, {\rm dist}_{\Sigma}(p,q) < r }$. 

A special class of hypersurfaces we will use in this paper is the class of properly quasi-embedded, which we shall define next:
\begin{definition}\label{QuasiEmbedded}
Let $\Sigma \subset \amb$ be a properly immersed, two-sided hypersurface in a complete manifold $\amb$. $\Sigma$ is said to be {\it properly quasi-embedded} if there exists a compact set $\mathcal K \subset \amb$ such that $\Sigma \setminus \mathcal K$ is embedded. 
\end{definition}

The above definition leads us to:

\begin{definition}\label{DefNonSeparating}
Let $\Sigma \subset \amb$ be a properly quasi-embedded hypersurface. We say that $\Sigma$ is {\it non-separating} if for $p \in \Sigma \setminus \mathcal K$, the embedded part of $\Sigma$, there exists a simple closed curve $\gamma \subset \amb$ intersecting $\Sigma$ transversally at the single point $p$. 
\end{definition}

\subsection{Tubes}

Let $\amb$ be a complete manifold and $\Sigma \subset \amb$ be a two-sided embedded hypersurface. An $\epsilon-$tubular neighborhood of $\Sigma$ (in short, {\it$\epsilon$-tube}) is the set of points in $\amb$ at distance at most $\epsilon$, that is,
$$ {\rm Tub}_\epsilon (\Sigma) = \set{ p \in \amb \, : \, {\rm dist}_{\amb}(p, \Sigma) \leq \epsilon } .$$

Moreover, 
\begin{definition}\label{ETube}
We say that ${\rm Tub}_\epsilon (\Sigma)$ is:
\begin{enumerate}
\item[(a)] {\bf $\epsilon$-Embedded:} The map $\Psi: \Sigma \times [-\epsilon , \epsilon ] \to {\rm Tub}_{\epsilon}(\Sigma )$ given by $\Psi (p,t) := {\rm exp}_p (t N(p))$ is an embedding. Here, ${\rm exp} $ is the exponential map of $\amb$ and $N$ is a unit normal along $\Sigma$.

\item[(b)] {\bf Well-oriented:} Each equidistant hypersurface
$$ \Sigma (\epsilon ’) = \set{p \in \amb \, : \, {\rm dist}_{\amb} (p, \Sigma) = \epsilon ’},$$ at distance $\epsilon ’ \in [-\epsilon , \epsilon ] \setminus \set{0}$, is mean convex (non-negative mean curvature) with the orientation that points away from $\Sigma$.

\item[(c)] {\bf Bounded extrinsic geometry:} There exists a constant $C \geq 1$ so that $\Sigma (\epsilon ’)$ is $C$-isometric to $\Sigma$ with second fundamental form uniformly bounded by $C$ for all $\epsilon ’ \in [-\epsilon , \epsilon ]$.
\end{enumerate}
\end{definition}

\subsection{Minimal hypersurfaces}

Let $\Sigma \subset \amb $ be a compact (two-sided) hypersurface with boundary $\partial \Sigma $ (possibly empty), $N$ a unit normal to $\Sigma$, and $0 \in \mathcal{I} \subset \r$ an interval. Let $\Psi : \Sigma\times \mathcal{I} \to \amb$ be an immersion defining a deformation of $\Sigma (0) = \Psi\left(\Sigma \times \set{0}\right)$. Let $X(p) = \derb{}{t}{t=0}\Psi (p,t)$ denote the variational vector field of this variation, then:

\begin{theorem}[First variation Formula]
In the above conditions, if $\eta$ denotes the inward unit conormal along $\partial \Sigma$ and $\vec{H}$ the mean curvature vector along $\Sigma$, then

\begin{equation}\label{1VF}
\derb{}{t}{t=0}{\rm Area}\left(\Sigma (t)\right) = \int _{\partial \Sigma} \meta{X}{\eta} ds - 2 \int _{\Sigma}\meta{X}{\vec{H}} dv_g .
\end{equation}
\end{theorem}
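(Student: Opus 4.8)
The plan is to differentiate the area functional under the integral sign after pulling the ambient metric back to the fixed manifold $\Sigma$. Put $\mathcal T_t := \mathcal T(\cdot , t)$ and $g(t) := \mathcal T_t^{\ast} g$, so that $\mathrm{Area}(\Sigma (t)) = \int_\Sigma dv_{g(t)}$ and, in local coordinates $\set{x^i}$ on $\Sigma$, $dv_{g(t)} = \sqrt{\det (g_{ij}(t))}\, dx$ with $g_{ij}(t) = \meta{\partial_i \mathcal T_t}{\partial_j \mathcal T_t}$. Since $\Sigma$ is compact and $\mathcal T$ is smooth in $(p,t)$, differentiation under the integral is legitimate, and by Jacobi's formula $\partial_t \sqrt{\det (g_{ij})} = \tfrac12 \sqrt{\det(g_{ij})}\; g^{ij}\,\partial_t g_{ij}$. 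So the whole problem reduces to evaluating $g^{ij}\partial_t g_{ij}$ at $t = 0$.

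The next step is to compute $\partial_t g_{ij}$. Using that $\camb$ is metric and torsion-free and that the coordinate fields commute --- which gives $\camb_{\partial_t}(\partial_i \mathcal T) = \camb_{\partial_i}(\partial_t \mathcal T)$ --- one obtains $\partial_t g_{ij} = \meta{\camb_{\partial_i}(\partial_t\mathcal T)}{\partial_j \mathcal T} + \meta{\partial_i\mathcal T}{\camb_{\partial_j}(\partial_t\mathcal T)}$. Evaluating at $t = 0$, where $\partial_t\mathcal T = X$, and contracting with a local orthonormal frame $\set{e_a}$ of $T\Sigma(0)$ yields $g^{ij}\partial_t g_{ij}\big|_{t=0} = 2\sum_a \meta{\camb_{e_a} X}{e_a}$. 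Decompose $X = X^{\top} + \meta{X}{N}\,N$ along $\Sigma(0)$, which is allowed since $\Sigma$ is two-sided. The tangential part contributes $\sum_a \meta{\camb_{e_a} X^{\top}}{e_a} = \sum_a \meta{\nabla_{e_a} X^{\top}}{e_a} = \mathrm{div}_\Sigma X^{\top}$, because the normal component of $\camb_{e_a} X^{\top}$ is annihilated on pairing with $e_a$; the normal part contributes $\meta{X}{N}\sum_a \meta{\camb_{e_a} N}{e_a} = -\meta{X}{N}\,\mathrm{Trace}(A)$ by the Weingarten relation $A(e_a) = -\camb_{e_a} N$. Therefore $\derb{}{t}{t=0} dv_{g(t)} = \big( \mathrm{div}_\Sigma X^{\top} - \mathrm{Trace}(A)\,\meta{X}{N} \big)\, dv_g$.

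Finally I would integrate over $\Sigma$ and apply the divergence theorem on the compact manifold-with-boundary $\Sigma$ to the tangent field $X^{\top}$: this converts $\int_\Sigma \mathrm{div}_\Sigma X^{\top}\, dv_g$ into a boundary integral of $\meta{X^{\top}}{\eta}$ over $\partial\Sigma$, the sign being fixed by the convention that $\eta$ is the inward unit conormal; and since $\eta$ is tangent to $\Sigma$ we may replace $X^{\top}$ by $X$ there. Rewriting $\mathrm{Trace}(A)\,\meta{X}{N}$ in terms of the mean curvature vector $\vec H$ (using the normalization under which \eqref{1VF} is stated) then gives the claimed identity.

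The argument is routine, so the only delicate points are bookkeeping: (i) the commutation $[\partial_t,\partial_i] = 0$, which is where torsion-freeness enters via $\camb_{\partial_t}(\partial_i\mathcal T) = \camb_{\partial_i}(\partial_t\mathcal T)$; (ii) the fact that the normal part of $\camb_{e_a} X^{\top}$ drops out of the trace; and (iii) matching the sign in the divergence theorem against the inward-normal convention and the constant relating $\mathrm{Trace}(A)$, $H$, and $\vec H$, so that the boundary term and the coefficient of $\int_\Sigma \meta{X}{\vec H}$ appear exactly as written. I expect (iii) --- pure convention-chasing --- to be the only real nuisance; there is no substantive obstacle.
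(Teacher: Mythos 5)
The paper states this classical theorem without a proof, so there is nothing in the text to compare against; your derivation --- pull back the metric along the variation, differentiate the volume form via Jacobi's formula, split the resulting trace $\sum_a\meta{\camb_{e_a}X}{e_a}$ into tangential divergence plus normal Weingarten contribution, and apply the divergence theorem --- is exactly the standard argument and is structurally correct.

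The one place you should not defer is your item (iii). If you carry the convention-chasing out with the conventions the paper fixes in Section 2 --- $A(X) = -\camb_X N$, $(n-1)H = \mathrm{Trace}(A)$, $\eta$ the \emph{inward} unit conormal --- your computation actually produces
\begin{equation*}
\derb{}{t}{t=0}\mathrm{Area}\bigl(\Sigma(t)\bigr) \;=\; -\int_{\partial\Sigma}\meta{X}{\eta}\,ds \;-\; \int_{\Sigma}\mathrm{Trace}(A)\,\meta{X}{N}\,dv_g .
\end{equation*}
With $\eta$ pointing inward the boundary integral carries a \emph{minus} sign, not the plus printed in \eqref{1VF} (the trivial test $\Sigma = [0,1]\times\set{0}\subset\r^2$ with $X(x,0)=(x,0)$ already detects this: the length derivative is $+1$ while $\int_{\partial\Sigma}\meta{X}{\eta}\,ds=-1$); and the coefficient $-2$ in \eqref{1VF} would force $\vec H = \tfrac12\mathrm{Trace}(A)\,N$, which is incompatible with $(n-1)H=\mathrm{Trace}(A)$ except when $n=3$. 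So what you called ``pure nuisance'' is in fact the place where the printed formula and the paper's own conventions come apart --- most plausibly a typo in \eqref{1VF}. Your argument is sound but proves the corrected formula, not \eqref{1VF} verbatim; you should say so rather than assume the constants will reconcile. This has no downstream effect, since the paper only invokes \eqref{1VF} to identify minimal hypersurfaces with critical points of area under compactly supported boundary-fixing variations, which is insensitive both to the boundary sign and to the normalization of $\vec H$.
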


From the First Variation Formula \eqref{1VF}, $\Sigma $ is minimal if, and only if, it is a {\it critical point} of the area functional for any compactly supported variation fixing the boundary. 

We continue this section by recalling the second variation formula of the area for minimal hypersurfaces. We focus on especially interesting variational vector fields $X$, those which are normal, i.e., $X(p)= f(p) N (p)$ and $f \in C^\infty _0 (\Sigma)$, where $C^\infty _0 (\Sigma)$ denotes the linear space of piecewise smooth functions compactly supported on $\Sigma$ that vanish at the boundary $\partial \Sigma$, i.e., $f_{|\partial \Sigma} \equiv 0$.

\begin{remark}
In fact, we only need $f \in H^{1,2}_0 (\Sigma)$, where $H^{1,2}_0 (\Sigma)$ is the closure of $C^\infty _0 (\Sigma)$ in the topology associated to the Sobolev norm. If $\Sigma $ is not compact, we will consider $f$ as a compactly supported function. 
\end{remark}

A minimal hypersurface is said {\it stable} if for any relatively compact domain $\Omega \subset \Sigma$, the area of $\Omega$ cannot be decreased up to second order by a variation $\Omega (t)$ of the domain leaving the boundary $\partial \Omega (t)$ fixed. In other words, if 
$$\left.\frac{d^2}{dt^2}\right\vert _{t=0} {\rm Area}(\Sigma (t)) \geq 0 , $$for any variation of the domain leaving the boundary fixed. We can write the Second Variation Formula as
\begin{equation}\label{2VF}
\left.\frac{d^2}{dt^2}\right\vert _{t=0} {\rm Area}(\Sigma (t))= - \int _{\Sigma} f L f \, dv_g ,
\end{equation}where $L : C_0 ^\infty (\Sigma) \to C_0 ^\infty (\Sigma) $ is the linearized operator of mean curvature or {\it Jacobi operator} (see \cite{HRos93}), that is,
\begin{equation}\label{1VH}
L f :=\left.\frac{d}{dt}\right\vert _{t=0} H(t) = \Delta f+ (|A|^2 + \Ricb  (N, N ))f,
\end{equation}where $H(t)$ is the mean curvature of the hypersurface $\Sigma (t)$ and  $|A|^2$ denotes the square of the length of the second fundamental form of $\Sigma$. The above formula \eqref{1VH} is also referred to as the {\it First Variation Formula for the Mean Curvature} and it holds whether $\Sigma$ is minimal or not. It is well-known (cf. \cite{PLiJWan04}) that:

\begin{proposition}\label{PropParabolicStable}
Assume that $(\amb ^n ,g)$ has nonnegative Ricci curvature $\Ricb \geq 0$ and $\Sigma \subset \amb$ is a complete, two-sided, parabolic, stable minimal hypersurface. Then, $\Sigma$ is totally geodesic and the Ricci curvature of the ambient manifold vanishes along $\Sigma$ in the normal direction, i.e., $\Ricb (N,N) \equiv 0$ along $\Sigma$.
\end{proposition}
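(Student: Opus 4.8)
The plan is to combine the stability inequality with the capacity characterization of parabolicity. From \eqref{2VF} and \eqref{1VH}, stability of $\Sigma$ means
\[
\int_\Sigma |\nabla f|^2 \, dv_g \;\geq\; \int_\Sigma \big(|A|^2 + \Ricb(N,N)\big) f^2 \, dv_g
\]
for all $f \in C_0^\infty(\Sigma)$, and by the Remark recalled above the inequality persists for $f \in H^{1,2}_0(\Sigma)$, in particular for compactly supported Lipschitz functions. Since $\Ricb \geq 0$ forces $\Ricb(N,N) \geq 0$, the potential $Q := |A|^2 + \Ricb(N,N)$ is a non-negative function on $\Sigma$, and the stability inequality becomes $\int_\Sigma Q\, f^2 \leq \int_\Sigma |\nabla f|^2$.

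First I would dispose of the compact case: if $\Sigma$ is closed, taking $f \equiv 1$ gives $\int_\Sigma Q \leq 0$, hence $Q \equiv 0$. For $\Sigma$ complete and non-compact I would exploit parabolicity in the form that the capacity of every compact subset of $\Sigma$ vanishes: for an exhaustion $K_1 \subset K_2 \subset \cdots$ of $\Sigma$ by compacta there exist cutoffs $\varphi_i$ with $0 \leq \varphi_i \leq 1$, $\varphi_i \equiv 1$ on $K_i$, $\varphi_i$ compactly supported and Lipschitz, and $\int_\Sigma |\nabla \varphi_i|^2 \to 0$ as $i \to \infty$ (see \cite{PLiJWan04} and references therein). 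Plugging $f = \varphi_i$ into the stability inequality gives $\int_{K_i} Q \leq \int_\Sigma Q\,\varphi_i^2 \leq \int_\Sigma |\nabla \varphi_i|^2 \to 0$, and letting $i \to \infty$ with monotone convergence yields $\int_\Sigma Q\, dv_g = 0$. In either case $Q = |A|^2 + \Ricb(N,N) \equiv 0$ on $\Sigma$ with both summands non-negative, so $|A| \equiv 0$ — i.e., $\Sigma$ is totally geodesic — and $\Ricb(N,N) \equiv 0$ along $\Sigma$, as claimed.

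The only delicate point, more bookkeeping than genuine obstacle, is the passage between the analytic formulations of parabolicity: one needs that parabolicity (no non-constant positive superharmonic function, equivalently recurrence of Brownian motion) is equivalent to the vanishing of the capacity of compact sets, which is what supplies the gradient-small cutoff family, and one needs those Lipschitz cutoffs to be legitimate test functions in the stability inequality — guaranteed by the density of $C_0^\infty(\Sigma)$ in $H^{1,2}_0(\Sigma)$ recorded in the Remark above. Both facts are classical potential theory on complete manifolds, and once they are granted the proof reduces to the one-line estimate carried out above; this is essentially the argument of \cite{PLiJWan04}.
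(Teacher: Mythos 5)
The paper does not give a proof of Proposition~\ref{PropParabolicStable}; it states the result as well known and cites \cite{PLiJWan04}. Your reconstruction — plugging parabolicity's vanishing-capacity cutoffs into the stability inequality with the non-negative potential $Q=|A|^2+\Ricb(N,N)$ — is correct and is essentially the argument of the cited reference, so it takes the same approach the paper is implicitly invoking.
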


\subsection{Local foliations}

The following lemma is well known in this subject and we welcome a reader to tell us to whom it should be attributed.

\begin{lemma}\label{Foliation}
Let $\Sigma^{n-1}$ be a complete, embedded, two-sided minimal hypersurface in a complete manifold $\amb^n$ of non-negative Ricci curvature. Suppose that for some $\delta >0$ the map $\Psi: \Sigma \times [0, \delta] \to \amb $, $\Psi(p,t) := {\rm exp}_p (t N(p))$, where $N$ is a unitary normal to $\Sigma = \Psi(\Sigma \times \set{0})$, is an embedding.

If there is a $\bar t \in (0, \delta]$ such that the equidistant $\Sigma (\bar t) = \Psi(\Sigma \times \set{\bar t})$ is a minimal hypersurface, then $\Psi: \Sigma \times [0, \bar t] \to \amb$ is an isometry onto its image, where $\Sigma \times [0, \bar t]$ has the product metric $\sigma + dt^2$; $\sigma$ the induced metric on $\Sigma$.
\end{lemma}
\begin{proof}
The geodesics $\gamma _p (t) = F(p,t)$, $t \in [0,\delta]$, are minimizing, $\Psi$ has no critical points so there are no conjugate points on $\gamma _p$ and no two geodesics $\gamma _p$, $\gamma _q$ intersect if $p \neq q$. 

Let $d = {\rm dist}_{\amb}\left(\Psi(p,t), \Psi(\Sigma \times \set{0})\right)= t$ denote the distance function on $\Psi(\Sigma \times [0, \delta]) \subset \amb$ to $\Sigma = \Psi(\Sigma \times \set{0})$; so $\camb d = N_t $ on $\Psi(\Sigma \times [0, \delta])$ where $N_t$ is the unitary normal along the equidistant $\Sigma (t) =\Psi(\Sigma \times \set{t})$, $t \in [0,\delta]$, such that $N_0 =N$. 

The First Variation Formula for the Mean Curvature \eqref{1VH} of the equidistant hypersurfaces $\Sigma (t)$, $t \in [0, \delta]$, is:
$$ H' (p,t) = |A_t (p)|^2 + \Ricb(\camb d _{\Psi(p,t)}, \camb d _{\Psi(p,t)}) \geq 0 ,$$where $A_t (p)$ is the second fundamental of $\Sigma (t)$ at $\gamma _p (t)$; and the sign of $(n-1)H(p,t) = {\rm Trace}\left( A_t (p) \right)$ is such that 
$$ A_t (p) (u) = - \camb _{u} \left( \camb d _{\Psi(p,t)}\right) , \, \, u \in T_{\Psi(p,t)}\Sigma (t).$$

Since $H(p, 0)=H(p, \bar t) =0$ for all $p \in \Sigma$ and $H$ is non-decreasing on $t$, then $H(p,t) = 0$ for all $(p,t) \in \Sigma \times [0, \bar t]$, each equidistant $\Sigma (t)$, $t \in [0, \bar t]$ is totally geodesic ($A_t \equiv 0$ for all $t \in[0,\bar t]$), and $\Ricb(\camb d , \camb d ) \equiv 0 $ as well in this region $ \Psi(\Sigma \times [0, \bar t])$.

Observe that $\camb d $ is parallel in $\amb$; $\camb _v \camb d =0$ for all $v \in T_{\Psi(p,t)} \amb$. To see this, decompose a tangent vector $v \in T_{\Psi(p,t)} \amb$ into its part tangent to the equidistant $\Sigma (t)$ and normal to $\Sigma (t)$; i.e., parallel to $\camb d$. Since $A_t \equiv 0$, $\camb _v \camb d =0 $ for all $v$ tangent to $\Sigma (t)$ and $\camb _{\camb d} \camb d = 0$, so $\camb _v \camb d =0$ for all $v \in T_{\Psi(p,t)} \amb$. 

Next, notice that $\camb d$ is a Killing field in $\amb$: 
$$ \meta{\camb _X \camb d}{Y} + \meta{\camb _Y \camb d }{X} = 0 \text{ for any } X,Y \in T_{\Psi(p,t)} \amb .$$

To check the above equation, again decompose $X$ and $Y$ into their tangent and normal parts. Hence, the map $\Sigma (t) \to \Sigma (s)$ given by the integral curves $\gamma _p (t)$ of $\camb d$ is an isometry for $t,s \in[0, \bar t]$. Thus $\Psi$ preserves the scalar product of the metric $\sigma + dt^2$, which proves the lemma.
\end{proof}

\section{Area-minimizing hypersurfaces}\label{Sect:Mini}

In this section, we use several advanced results in Geometric Measure Theory (GMT) to address the problems we consider. We will not delve into the techniques involved in proving these theorems but will provide exact references when necessary. We are particularly grateful to Brian White for explaining several important subtleties of GMT and for patiently answering our questions. His lectures in 2012 on GMT \cite{White}, written by Otis Chodosh, are an inspiring presentation of GMT, and we will frequently reference this text (cf. also \cite{CDeLJHirAMarSStu20, FMor16,NWic14}). In this section, we assume that the ambient manifold $\amb^n$, $3 \leq n \leq 7$, is connected (possibly with boundary), orientable, and of bounded geometry.

\begin{definition}
Let $\mathcal F \subset \amb$ be an orientable properly embedded minimal hypersurface. We say that $\mathcal F$ is area-minimizing with $\zz _2$-coefficients, in short, area-minimizing ${\rm mod}(2)$, if for any compact domain $C \subset \mathcal F$ with $C^1$ boundary and $\mathcal S \subset \amb$ a compact hypersurface with $\partial \mathcal S = \partial C$ (not as oriented hypersurfaces) and homologous to $C$, $\mathcal H ^{n-1}( C) \leq \mathcal H ^{n-1}(\mathcal S)$.
\end{definition}

We establish here the result from GMT we will use:

\begin{theorem}\label{ThCurrent}
Let $(\amb^n, g)$, $3 \leq n \leq 7$, be a complete, orientable Riemannian manifold and $\mathcal C \subset \amb$ be a relatively compact domain with piecewise, mean convex, $C^1$ boundary. Let $W \subset \partial \mathcal C$ be a compact domain such that $\partial W := \Gamma$ is a disjoint union of embedded hypersurfaces in $\partial \mathcal C$. Then, there exists a compact area-minimizing ${\rm mod}(2)$ hypersurface $\mathcal F \subset \mathcal C$, $\partial \mathcal F = \Gamma$ and $\mathcal F$ homologous to $W$, i.e., $\mathcal F$ is area-minimizing ${\rm mod}(2)$ relative to $(\mathcal C, W, \Gamma)$.
\end{theorem}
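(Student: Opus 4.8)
The plan is to obtain $\mathcal F$ as a mass-minimizer in the class of integral $(n-1)$-currents mod $2$ with prescribed boundary $\Gamma$ and in the prescribed mod-$2$ homology class, using the direct method of the calculus of variations, and then to invoke the De Giorgi--Federer--Fleming--Hardt--Simon regularity theory to conclude that the minimizer is a smooth compact embedded hypersurface up to and including its boundary. First I would set up the problem in the closed set $\overline{\mathcal C}$: since $\mathcal C$ is relatively compact with $C^1$ boundary, fix a reference current $T_0 = \llbracket W \rrbracket$ (the integration current of $W$, with mod-$2$ coefficients), which has $\partial T_0 = \llbracket \Gamma \rrbracket$. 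Consider the admissible class
\[
\mathcal A = \set{ T \in \mathcal I_{n-1}(\overline{\mathcal C}; \zz_2) \, : \, \partial T = \llbracket \Gamma \rrbracket, \ T - T_0 = \partial Q \text{ for some } Q \in \mathcal I_n(\overline{\mathcal C}; \zz_2) },
\]
so that every $T \in \mathcal A$ is homologous to $W$ rel $\Gamma$ with $\zz_2$-coefficients; this class is nonempty since $T_0 \in \mathcal A$, and it has finite infimal mass bounded by $\mathcal H^{n-1}(W)$.

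Next I would run the compactness argument. Take a minimizing sequence $T_k \in \mathcal A$ with $\mathbf M(T_k) \to \inf_{\mathcal A} \mathbf M =: m$; after discarding finitely many terms we may assume $\mathbf M(T_k) \le m+1$, and since $\partial T_k = \llbracket\Gamma\rrbracket$ is fixed, the masses $\mathbf M(T_k) + \mathbf M(\partial T_k)$ are uniformly bounded. By the Federer--Fleming compactness theorem for integral currents mod $2$ in the compact set $\overline{\mathcal C}$, a subsequence converges in the flat topology to some $T \in \mathcal I_{n-1}(\overline{\mathcal C};\zz_2)$; lower semicontinuity of mass under flat convergence gives $\mathbf M(T) \le m$, boundary continuity gives $\partial T = \llbracket\Gamma\rrbracket$, and closedness of the homology constraint under flat convergence (the $Q_k$ also have bounded mass, being supported in $\overline{\mathcal C}$, so pass to a limit $Q$) gives $T \in \mathcal A$; hence $\mathbf M(T) = m$ and $T$ is a mass-minimizer mod $2$. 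Here the mean convexity of $\partial\mathcal C$ enters: it guarantees by the maximum principle (barrier/comparison argument, sliding $\partial\mathcal C$ inward) that $\mathrm{spt}\,T \setminus \mathrm{spt}\,\llbracket\Gamma\rrbracket$ does not touch the free part of $\partial \mathcal C$, so the interior regularity theory applies away from $\Gamma$, and the minimizer is genuinely interior except along its prescribed boundary $W \subset \partial\mathcal C$.

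Then I would apply the regularity theory. In the interior, a mass-minimizing integral current mod $2$ of dimension $n-1$ in an $n$-manifold is smooth and embedded away from a singular set of Hausdorff dimension at most $n-8$ (Federer's dimension reduction, building on De Giorgi and Almgren); since $3 \le n \le 7$ this forces $n - 8 < 0$, so the singular set is empty and $\mathcal F := \mathrm{spt}\,T \setminus \Gamma$ is a smooth embedded minimal hypersurface in $\mathcal C$. Boundary regularity at $\Gamma$ — which is embedded and lies in the $C^1$ (indeed we may assume smooth by approximation, or use Hardt--Simon boundary regularity directly) hypersurface $\partial\mathcal C$ — gives that $\overline{\mathcal F}$ is a smooth compact hypersurface-with-boundary and $\partial \mathcal F = \Gamma$. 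Finally, minimality of mass among competitors homologous to $W$ translates, via the first variation formula \eqref{1VF}, into $\mathcal F$ being a minimal hypersurface, and in fact it minimizes $\mathcal H^{n-1}$ among all hypersurfaces with boundary $\Gamma$ homologous to $W$ mod $2$, since any such smooth competitor induces an admissible current with the same mass.

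The main obstacle is not any single estimate but correctly packaging the mod-$2$ framework together with the boundary/obstacle aspect: one must be sure the homological constraint is both closed under flat limits and compatible with the mean-convex barrier, and that the relevant regularity statements (interior regularity mod $2$ in low dimensions, and boundary regularity at an embedded $C^1$ boundary lying in a mean-convex obstacle surface) are available in exactly the form needed. These are all in the literature — I would cite \cite{White} together with \cite[Section 3]{NWic14} for the regularity package and the mod-$2$ compactness — so the work is citation and bookkeeping rather than new analysis; I would merely remark that mod-$2$ coefficients are essential to rule out the lower-dimensional singularities that orientable (integer) minimizers can a priori have, and to allow the minimizer $\mathcal F$ to be one-sided if $W$ is.
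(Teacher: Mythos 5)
Your proof takes essentially the same route as the paper: the direct method for mass-minimizing integral currents mod $2$ with fixed boundary $\Gamma$, Federer--Fleming compactness in the compact set $\overline{\mathcal C}$, lower semicontinuity of mass, and then the De Giorgi--Federer--Fleming--Hardt--Simon regularity theory to get a smooth embedded minimizer in codimension one for $3 \le n \le 7$. The paper's own proof is a terse three-sentence sketch pointing to \cite{White} (Sections 3.4, 8.1, 9.14); your account supplies the admissible class $\mathcal A$ explicitly, the filling-current argument for closedness of the homology constraint under flat limits, and the correct observation that mean convexity of $\partial\mathcal C$ is used as a barrier so the minimizer does not escape to the free part of the boundary --- all of which the paper leaves implicit. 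So the core of the argument is right and matches the paper.

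One remark at the end is, however, incorrect and worth fixing: you say mod-$2$ coefficients are ``essential to rule out the lower-dimensional singularities that orientable (integer) minimizers can a priori have.'' For codimension-one area-minimizing currents this is backwards: integral (integer-multiplicity) minimizers and mod-$2$ minimizers have the same interior regularity, namely a singular set of Hausdorff dimension at most $n-8$ (De Giorgi--Federer--Simons), so in $3 \le n \le 7$ both are smooth. It is mod-$p$ with $p$ \emph{odd} that admits genuinely worse singularities (e.g.\ codimension-two triple junctions for $p=3$). The real reason the paper works mod $2$ is topological flexibility: the competitor class is larger, one need not choose orientations, and the minimizer is allowed to be one-sided, which is exactly the second half of your sentence and is correct. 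Separately, you invoke Hardt--Simon for boundary regularity; this is the right idea, but Hardt--Simon is stated for integer-multiplicity currents, and for the mod-$2$ version one should rather cite the mod-$2$ boundary regularity as packaged in \cite{White} (Theorem 9.14) or \cite[Section 3]{NWic14}, which is what the paper does.
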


The strategy to prove Theorem \ref{ThCurrent} is as follows. First, observe that the Compactness Theorem \cite[Section 3.4]{White} guarantees a convergent subsequence $F_i$ bounding $\Gamma$ whose masses approach the infimum of the mass of integral currents spanning $\Gamma$. Such a subsequence converges to a minimizer $\mathcal F$ \cite[Theorem 8.1]{White} being an integral current. Regularity theory ${\rm mod}(2)$ (cf. Theorem \cite[Theorem 9.14]{White}) implies that $\mathcal F$ is smooth, embedded, and two-sided for $3 \leq n \leq 7$.

\subsection{Construction of area-minimizers}

Let $(\amb^n, g)$ be an oriented complete manifold with boundary. Moreover, its boundary $\partial \amb$ is said to be {\it a good barrier} if it is the disjoint union of smooth hypersurfaces meeting at interior angles less than or equal to $\pi$ along their boundaries and such that the mean curvature of these hypersurfaces with respect to the inward orientation is non-negative.

\begin{lemma}\label{LemF}
Let $(\amb^n, g)$, $3 \leq n \leq 7$, be a connected, orientable, complete manifold of bounded geometry with boundary $\partial \amb$ which is a good barrier and has two connected components, $\mathcal S _1$ and $\mathcal S_2$. Let $\gamma \subset \amb$ be a geodesic joining a point $p_1 \in \mathcal S_1$ to a point $p_2 \in \mathcal S_2$. Assume there exists an exhaustion by compact domains $\set{C_i}_{i \in \np}$, $C_i \subset C_{i+1}$ for all $i \in \n$, whose boundaries are good barriers. Then, $\amb$ contains a properly embedded, orientable, stable minimal hypersurface $\mathcal F$. In fact, $\mathcal F$ is area-minimizing ${\rm mod}(2)$, intersects $\gamma$, and is homologous to one of the boundary components of $\amb$, say $\mathcal S_1$.
\end{lemma}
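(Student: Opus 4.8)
The plan is to construct $\mathcal{F}$ as a limit of area-minimizing $\operatorname{mod}(2)$ hypersurfaces produced inside the compact exhausting domains $C_i$ by Theorem \ref{ThCurrent}, and then to use the good-barrier hypotheses plus the curvature estimates of Section 3 to pass to the limit without loss of regularity or compactness of the limit object.

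First I would set up the boundary data. For each $i$, the domain $C_i$ has good-barrier boundary whose two "ends" lie near $\mathcal{S}_1$ and $\mathcal{S}_2$; take $W_i \subset \partial C_i$ to be the component of $\partial C_i$ that is homologous (inside $C_i$) to $\mathcal{S}_1$, so that $\Gamma_i := \partial W_i$ is a disjoint union of embedded hypersurfaces sitting in the side barriers of $\partial C_i$. Applying Theorem \ref{ThCurrent} to $\mathcal{C} = C_i$ and this $W_i$ yields a smooth, embedded, orientable hypersurface $\mathcal{F}_i \subset C_i$ with $\partial \mathcal{F}_i = \Gamma_i$, area-minimizing $\operatorname{mod}(2)$ among competitors with boundary $\Gamma_i$ homologous to $W_i$; in particular each $\mathcal{F}_i$ is stable. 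Because $\partial \mathcal{N}$ is a good barrier (mean convex toward the interior) and the side barriers of each $C_i$ are as well, the maximum principle keeps $\mathcal{F}_i$ from touching $\partial \mathcal{N}$ except along $\Gamma_i$, and keeps $\mathcal{F}_i$ away from the "outgoing" barrier; this is where mean convexity of the barriers is essential.

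Next I would take the limit $i \to \infty$. Since $(\mathcal{N},g)$ has bounded geometry and the $\mathcal{F}_i$ are stable minimal hypersurfaces in ambient dimension $n \leq 7$, the Schoen--Simon(-Yau) curvature estimates discussed in Section 3 give uniform local second-fundamental-form bounds for the $\mathcal{F}_i$ on compact subsets of the interior $\mathcal{N} \setminus \partial \mathcal{N}$ (the boundaries $\Gamma_i$ escape to infinity, so any fixed compact set is eventually boundary-free for the sequence). Combined with a uniform local area bound — obtained by comparing $\mathcal{F}_i$ with a fixed competitor built from a piece of $\mathcal{S}_1$ pushed in, using the exhaustion — standard compactness for stable minimal hypersurfaces (Arzel\`a--Ascoli on graphs in Fermi-type coordinates, or the corresponding varifold/current compactness plus Allard regularity) yields a subsequence converging smoothly on compact sets to a complete, embedded, orientable, stable minimal hypersurface $\mathcal{F} \subset \mathcal{N}$. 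One must check $\mathcal{F}$ is nonempty and does not disappear to infinity: this follows because each $\mathcal{F}_i$ must cross any fixed separating region between $\mathcal{S}_1$ and $\mathcal{S}_2$ (by its homology class, $[\mathcal{F}_i]$ relative to $\mathcal{S}_1$ is nontrivial), pinning down a uniform interior point and a uniform area lower bound there. Properness of $\mathcal{F}$ in $\mathcal{N}$ follows from the uniform local area bounds (monotonicity prevents accumulation). Finally, the $\operatorname{mod}(2)$ area-minimizing property of $\mathcal{F}$ — minimizing among compact hypersurfaces with the same boundary, homologous $\operatorname{mod}(2)$, as in the Definition preceding Section 3.1 — passes to the limit: any compact competitor $\mathcal{S}$ for a compact piece $C \subset \mathcal{F}$ is, for $i$ large, also a valid competitor against $\mathcal{F}_i$ up to a boundary correction of arbitrarily small area, and lower semicontinuity of area under the convergence closes the argument; the homology class relative to the boundary identifies $\mathcal{F}$ as homologous to $\mathcal{S}_1$.

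The main obstacle I anticipate is the passage to the limit near infinity: ensuring that the minimizers $\mathcal{F}_i$ neither degenerate (collapse in area on every compact set, so that $\mathcal{F} = \emptyset$) nor fail to be proper in the limit (accumulate onto themselves or onto $\partial\mathcal{N}$). Both are controlled by a careful use of the good-barrier condition together with a homological/linking argument forcing each $\mathcal{F}_i$ to separate $\mathcal{S}_1$ from $\mathcal{S}_2$ inside $C_i$, but making this quantitative — a uniform interior area lower bound and a uniform interior curvature bound simultaneously — is the delicate point. A secondary subtlety is the $\operatorname{mod}(2)$ bookkeeping: one works with $\mathbb{Z}_2$-currents throughout so that no orientation obstruction arises, and the regularity theory $\operatorname{mod}(2)$ of \cite[Section 9]{White} guarantees smoothness and embeddedness of the $\mathcal{F}_i$ (hence, by the curvature estimates, of $\mathcal{F}$) precisely in the range $3 \leq n \leq 7$.
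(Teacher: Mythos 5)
Your overall strategy matches the paper's: minimize $\operatorname{mod}(2)$ inside the exhausting domains $C_i$ via Theorem \ref{ThCurrent}, prove uniform local estimates, and pass to a diagonal limit. But the step you identify as "the delicate point" — the uniform local area bound — is precisely where your argument, as written, does not work. You propose getting it "by comparing $\mathcal{F}_i$ with a fixed competitor built from a piece of $\mathcal{S}_1$ pushed in." That comparison only controls the \emph{total} area of $\mathcal{F}_i$, and since $\mathcal{H}^{n-1}(W_i) \to \infty$ as $i \to \infty$, it does not give a bound independent of $i$; more to the point, what the Schoen--Simon--Yau curvature estimate requires is a \emph{local density} bound in each small geodesic ball, and a global area estimate does not by itself produce one. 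The paper instead proves this as its Claim A: cover $C_j$ by small balls $\mathcal{B}_{x_k}(2\delta)$ and, inside each such ball, use the $\operatorname{mod}(2)$-minimizing property directly via a two-coloring of $\partial\mathcal{B}\setminus \mathcal{F}_i$ and a cut-and-paste replacement to show $\mathcal{H}^{n-1}\bigl(\mathcal{F}_i\cap \mathcal{B}_{x_k}(2\delta)\bigr) \le \tfrac12 \mathcal{H}^{n-1}\bigl(\partial\mathcal{B}_{x_k}(2\delta)\bigr)$, independently of $i$. This is the step you need to make your sketch correct; with it, Claim B of the paper (the curvature estimate) follows and the compactness goes through as you describe.

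Two smaller points. First, your choice "take $W_i \subset \partial C_i$ to be the component of $\partial C_i$ homologous to $\mathcal{S}_1$" is ambiguous; the paper's choice — $W_i$ is the connected component of $C_i \cap \mathcal{S}_1$ containing a fixed point $p_1$, with $\Gamma_i = \partial W_i$ — is the one that makes the homology bookkeeping clean and guarantees $\Gamma_i$ escapes every compact set. Second, the paper anchors the non-degeneration argument with an explicit minimizing geodesic $\gamma$ from $\mathcal{S}_1$ to $\mathcal{S}_2$: the intersection number of $W_i$ with $\gamma$ is one, so each $\mathcal{F}_i$ must meet $\gamma$, which both locates the limit and (together with the curvature bound and a cut-and-paste argument showing two nearby sheets would contradict minimality) forces the limit to be embedded, multiplicity one, and orientable. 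Your "by its homology class" argument is aiming at the same thing but you should make the intersection-theoretic mechanism and the multiplicity-one step explicit; the orientability of $\mathcal{F}$ is a genuine output of the multiplicity-one argument, not automatic from $\operatorname{mod}(2)$ regularity alone.
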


Lemma \ref{LemF} essentially follows from the arguments in the proof of \cite[Lemma 2.2]{MAnd90}, with the modification that the ambient manifold is complete without boundary, while the boundary of the ambient manifold acts as a {\it good barrier}. However, we include a detailed proof here for the reader's convenience. Additionally, this proof provides the technique to demonstrate a Tubular Neighborhood Theorem for area-minimizers ${\rm mod}(2)$ (cf. Theorem \ref{Mod2Proper} below).

\begin{proof}[Proof of Lemma \ref{LemF}]
Without loss of generality, we can assume that $\gamma : [0,1] \to C_i$, $\gamma (0) = p_1$ and $\gamma (1) = p_2$, for all $i \in \mathbb{N}$. Denote by $W_i$ the connected component of $C_i \cap \mathcal S_1$ that contains $p_1 \in W_i$; set $\Gamma_i = \partial W_i$. Since $\partial C_i$ leaves any compact set of $\amb$ for $i$ large, ${\rm dist}_{\amb}(\gamma, \Gamma_i) \to + \infty$ as $i \to + \infty$.

Fix $j \in \mathbb{N}$ and let $i > j$. Let $F_{i} \subset C_{i}$ be an embedded hypersurface in $\amb$ such that $\partial F_i = \Gamma_i$ and $F_i$ minimizes area ${\rm mod}(2)$ given by Theorem \ref{ThCurrent}. Orient $F_i$ by the normal pointing to the domain bounded by $F_i \cup W_i$ (cf. \cite[Section 5.7, page 66]{FMor16}). Let $F_{ij}$ be the component of $F_i \cap C_j$ such that $F_{ij} \cap \gamma \neq \emptyset$; this component exists since the intersection number of $W_j$ and $\gamma$ is one ($p_1 \in W_j$). 

Let $\bar t _{ij} := {\rm min} \set{ t \in [0,1] \, : \, \, \gamma (t) \in F_{ij} } \geq 0$ and 
 $\gamma (\bar t _{ij}):=x_{ij} \in F_{ij} \cap \gamma$ be the highest point of $F_{ij} \cap \gamma$, so the arc of $\gamma$ joining $x_{ij}$ to $p_1$ is contained in the domain bounded by $F_i \cup W_i$, and the normal to $F_{ij}$ at $x_{ij}$ points into this domain (cf. Figure \ref{Figure1}). 

\begin{figure}[htbp]
\begin{center}
\includegraphics[scale=0.2]{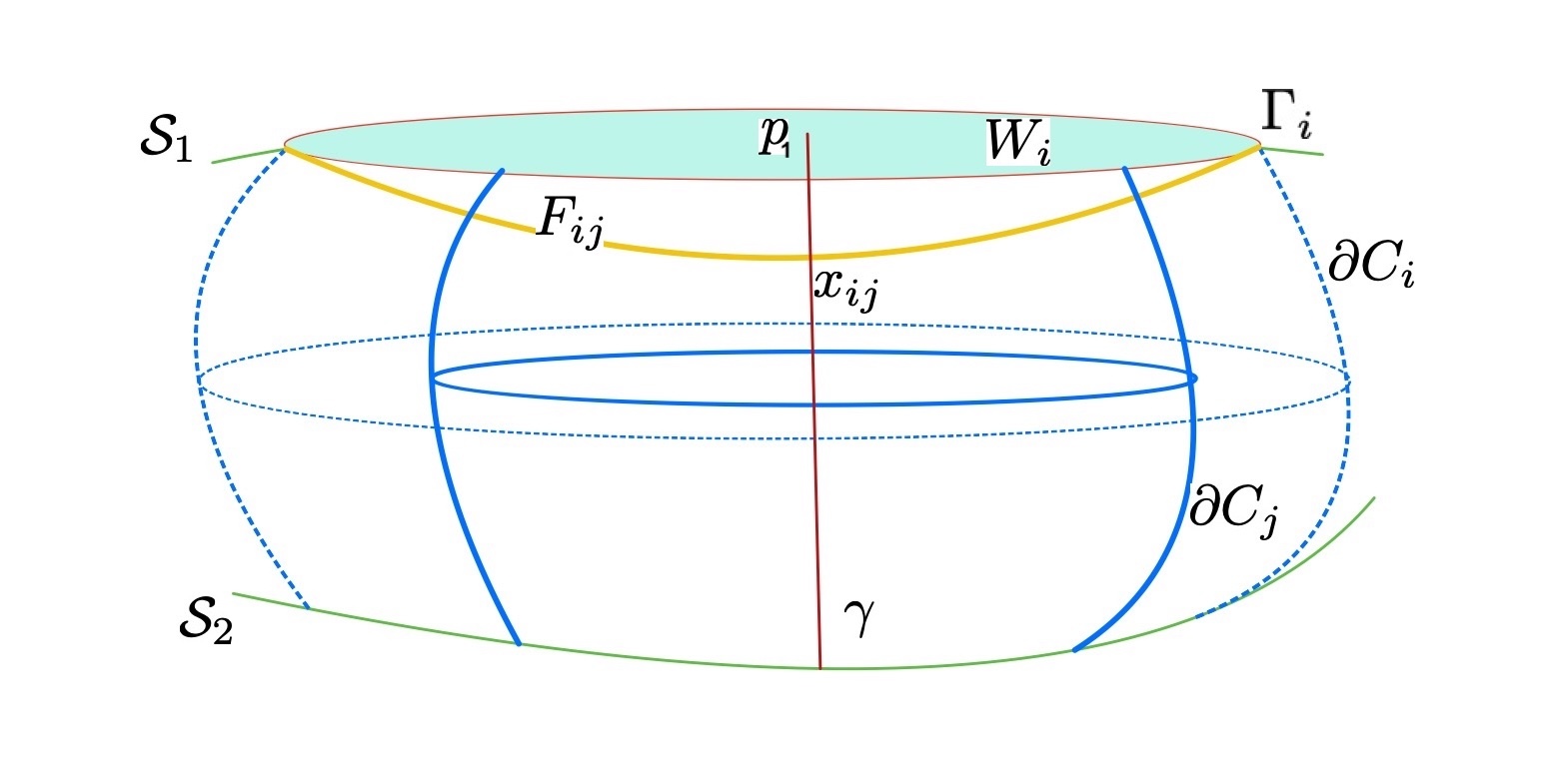}
\caption{Construction of $F_{ij}$}
\label{Figure1}
\end{center}
\end{figure}

We now wish to show the curvature of $F_{ij}$ is bounded independently of $i$, and to prove this we will show they have good area bounds to which the curvature estimates of Schoen-Simon-Yau \cite{RSchLSim81, RSchLSimSYau75} apply. Cover $C_j$ by {\it small} metric balls $\mathcal B_{x_k} (2 \delta)$ such that the balls $\mathcal B_{x_k} (\delta)$ already cover $C_j$. We choose $\delta$ sufficiently small so that each $\mathcal B_{x_k} (2\delta)$ is strictly convex and diffeomorphic to the unit ball of $\r^n$. Since the extrinsic ball $\mathcal B_{x_k} (2\delta)$ is diffeomorphic to a ball in $\r^n$, every embedded, piecewise smooth hypersurface in $\mathcal B_{x_k} (2\delta)$ bounds a domain in $\mathcal B_{x_k} (2\delta)$. $C_j$ is compact and $\amb$ has bounded geometry, so a finite number of $\mathcal B_{x_k} (\delta)$ suffice to cover $C_j$ and their number depends on the volume of $C_j$ and curvature bounds of $\amb$. The balls are sufficiently small so the volume of their boundaries is at most $c \delta^{n-1}$ for some constant $c$ independent of $k$. For later use, we will cover $\amb$ with a countable number of such {\it small balls}.

\begin{quote}
{\bf Claim A:} {\it The volume of $F_{ij} \cap \mathcal B_{x_k} (2\delta)$ is bounded above by half the volume of $\partial \mathcal B_{x_k} (2\delta)$.}
\end{quote}
\begin{proof}[Proof of Claim A]

This statement can be found in \cite[Example 8.9]{White}, however, we prefer to include it for the reader’s convenience.

We will first suppose $\mathcal B_{x_k}(2\delta) \equiv \mathcal B$ is the unit ball of $\r^3$ and $F_{ij} \cap \mathcal B = S_1 \cup \ldots  \cup S_m$ are pairwise disjoint embedded surfaces, each $S_l$ meeting $\partial \mathcal B$ transversally in a finite number of (topological) circles. Let $S := S_1 \cup \ldots \cup S_m$ and $\beta_1, \ldots, \beta_r$ be the pairwise disjoint circles of $S \cap \partial \mathcal B$. The complementary components of  $\partial \mathcal B \setminus \bigcup _{a=1}^r \beta _a$ can be {\it painted with two colors}, which we will denote by $\cR$ and $\cB$. So if $D$ is one such component labeled $\cR$, then each circle in $\partial D$ is two-sided, with the label $\cR$ on the $D$ side, and the label $\cB$ on the other side (a closed embedded hypersurface of $\partial \mathcal B$ separates it into two components and is orientable). Now label one component $\cR$, then label the components of the other side of each circle in its boundary by $\cB$, and continue labeling until each component is assigned $\cR$ or $\cB$. So each circle $\beta _a$ is labeled $\cR$ on one side and $\cB$ on the other side.

The reader can convince themselves that this labeling works as follows. Choose some circle $\beta_1$ that bounds a domain $D_1$ on $\partial \mathcal B$ that contains no other circle. Label $D_1$ by $\cR$. Let $D_2$ be the connected component of $\partial \mathcal B \setminus \left( D_1 \cup \beta_2 \cup \ldots \cup \beta_r \right)$ containing $\beta_1$ in its boundary and let $\beta_2, \ldots, \beta_{\bar r}$, $\bar r \leq r$ and (relabeling if necessary) the remaining circles of $\partial D_2$. Label $D_2$ by $\cB$. Each $\beta_2, \ldots, \beta_{\bar r}$ bounds a disk in $\partial \mathcal B \setminus D_1$, they are pairwise disjoint, and we continue the labeling in each disk bounded by the $\beta_2, \ldots, \beta_{\bar r}$. So the labeling is an increasing sequence of domains of $\partial \mathcal B$ and terminates after a finite number of steps.

Now, $S$ is the disjoint union of the $S_1, \ldots, S_l$. We attach the $\cR$ domains $D_\alpha$ of $\partial \mathcal B$ to $S$ by gluing each circle in $\partial D_\alpha$ to the circle from which it came in $S$; i.e., in $\partial S_\alpha$ for some $S_\alpha$. This gives a finite number of piecewise smooth closed embedded surfaces $M_1, \ldots, M_{r’}$, $r’ \leq r$, in $\mathcal B$; smooth except along the circles. Each $M_\nu$ bounds a compact domain in $\mathcal B$ and the domains are pairwise disjoint. Each $M_\nu \cap {\rm int}(\mathcal B)$ is homologous ${\rm mod}(2)$ to the $\cR$ domains in $\partial M_\nu$.

Since $S$ is area-minimizing ${\rm mod}(2)$, the area of $M_\nu \cap {\rm int}(\mathcal B)$ is at most the area of the $\cR$ or $\cB$ domains in $\partial M_\nu$. Thus, the area of $S$ is less than half the area of $\partial \mathcal B$.

The reader can easily check that the proof works as well with $\mathcal B$ replaced by $\mathcal B_{x_k} (2\delta)$ and the $S_l$’s replaced by the connected components of $F_{ij} \cap \mathcal B_{x_k} (2\delta)$, each connected component meeting $\partial \mathcal B_{x_k} (2\delta)$ transversally in pairwise disjoint closed embedded hypersurfaces in $\mathcal B_{x_k} (2\delta)$ (playing the role of the circles), which proves Claim A.
\end{proof}

Then, we can assert:

\begin{quote}
{\bf Claim B:} {\it Schoen-Simon-Yau's curvature estimates \cite{RSchLSim81, RSchLSimSYau75} apply, i.e., $F_{ij}$ has bounded curvature independent of $i > j$.}
\end{quote}
\begin{proof}[Proof of Claim B]
Since $C_j$ is covered by a finite number of {\it small} balls $\mathcal B_{x_k}(2\delta)$, with $\mathcal B_{x_k}(\delta)$ already covering $C_j$, and in each such ball
$$\mathcal H ^{n-1}\left( F_{ij} \cap \mathcal B_{x_k}(2\delta) \right) \leq c \delta ^{n-1},$$ where $c$ depends on the geometry of $\amb$ and $c$ is the same for all $\delta$ sufficiently small and for all $i,j,k$. In fact, the bounded geometry of $\amb$ will give the same estimate in each of the countable $\mathcal B_{x_k}(2\delta)$ of the covering of $\amb$ for any properly embedded area-minimizing hypersurface ${\rm mod}(2)$ in $\amb$ by Claim A. Thus, for any such hypersurface $\mathcal F$, one has (cf. \cite{RSchLSim81, RSchLSimSYau75})
$$ {\rm sup}_{\mathcal F \cap \mathcal B_{x_k}(\delta)} |A_{\mathcal F}|^2 \leq  C(n,c) \delta ^{-2},$$
for some constant $C(n,c)$ that only depends on the dimension $n \leq 7$ and $c$ above. Therefore, $F_{ij}$ has bounded curvature independent of $i$ greater than $j$ and Claim B is proved.
\end{proof}

The $F_{ij}$ have bounded curvature by Claim B, thus they are (locally) graphs of bounded geometry over some ball of radius $\tau>0$ in their tangent hyperplane at each point and $\tau$ only depends on the curvature bounds. More precisely, the intrinsic ball of radius $\tau$ centered at $p \in F_{ij}$, $\mathcal D^{ij}_{p}(\tau)$, is the graph (in normal geodesic coordinates) of a function $u^{ij}_p$ defined on ${\rm exp}(D^{ij}_p)$, where $D^{ij}_p \subset T_p F_{ij}$ is an open neighborhood of ${\bf 0} \in T_p F_{ij}$ in $T_p F_{ij}$, such that
$$ u^{ij}_p ({\bf 0}) = 0 = |\nabla u^{ij}_p ({\bf 0})| $$and
$$ |\nabla u^{ij}_p (y)|^2 \leq  \Lambda \tau^2 \text{ and } |\nabla^2 u^{ij}_p (y)|^2 \leq \Lambda \tau^2  \text{ for all } y \in D^{ij}_p ;$$
which implies that $\partial \mathcal D^{ij}_p (\tau) \cap \mathcal B _p (\tau /2) = \emptyset$. Since $\partial \mathcal D^{ij}_p (\tau) \cap \mathcal B _p (\tau /2) = \emptyset$, monotonicity of the area ratios gives a strictly positive lower bound for the area for all these local graphs $\mathcal D^{ij}_p (\tau)$ for all $p \in F_{ij}$ and $i > j$.

An important remark is that if $q \in F_{ij} \setminus \mathcal D^{ij}_p (2\tau)$, then $q$ cannot be too close (in the extrinsic distance) to $p$. For when $q$ is close to $p$, $\mathcal D^{ij}_p(\tau)$ and $\mathcal D^{ij}_q (\tau)$ are disjoint since $F_{ij}$ is embedded. Then, by compactness of minimal graphs, $\mathcal D^{ij}_p (\tau /2)$ and $\mathcal D^{ij}_q (\tau /2)$, if $p$ and $q$ are sufficiently close and $\tau $ sufficiently small, are graphs over the same domain ${\rm exp}(U_p)$, $U_p \subset T_p F_{ij}$, that are $C^2$-close. Hence $\partial \mathcal D^{ij}_p (\tau /2)$ and $\partial \mathcal D^{ij}_q (\tau /2)$ are $C^2$-close and $\partial \mathcal D^{ij}_p (\tau /2) \cup \partial \mathcal D^{ij}_q (\tau /2)$ is the boundary of a (annular) hypersurface $\mathcal A$ of small volume. Replace $F_{ij}$ by removing $\mathcal D^{ij}_p (\tau /2) \cup \mathcal D^{ij}_q (\tau /2)$ and adding $\mathcal A$. This is congruent to $F_{ij} \,  {\rm mod}(2)$ and of less volume than $F_{ij}$ if $q$ is sufficiently close to $p$. This is impossible since $F_{ij}$ is area-minimizing ${\rm mod}(2)$.

Now, $F_{ij}$ has finite volume independent of $i$ so can be covered by a finite number of the $\mathcal D^{ij}_p(\tau)$. Next, we construct a limit $\mathcal F_j$ of a subsequence of $F_{ij}$ which will be an oriented embedded area-minimizing ${\rm mod}(2)$ hypersurface and $\mathcal F_j \cap \gamma \neq \emptyset$. Let $i \to \infty$ and choose a subsequence of the $x_{ij}$ that converges to a point $x_j \in \gamma$ and also their tangent hyperplanes at the points of the subsequence converge to a hyperplane $P_j \subset T_{x_j}\amb$. The $\tau$-graphs $\mathcal D_{x_{ij}} (\tau ) \subset F_{ij}$ of the subsequence converge to a minimal $(\tau/2)$-graph $\mathcal G_j$ (in normal geodesic coordinates over $P_j$) at $x_j$.

Since the tubular neighborhoods of the $F_{ij}$ have a fixed size independent of $i$, and the $x_{ij}$ are the highest points of intersection of $F_{ij}$ with $\gamma$, the convergence of the $\mathcal D_{x_{ij}} (\tau)$ to $\mathcal G_j$ is multiplicity one. Recall that $F_i$ and $W_i$ are homologous, hence they bound a domain $\Omega_i \subset \amb$. Along the converging subsequence (also called $F_{ij}$), the normal points into $\Omega_i$. So, the normal of $F_{ij}$ at $x_{ij}$ pointing towards $\Omega_i$, defines an orientation of the limit surface $\mathcal G_j$. 

Each point of $\partial \mathcal G_j$ is also a limit of points in the graphs $\mathcal D_{x_{ij}} (\tau)$ and the normals to $\mathcal D_{x_{ij}} (\tau)$ converge to the normal to $\mathcal G_j$ at each boundary point of $\mathcal G_j$. The same limiting process at the boundary points of $\mathcal G_j$ extends $\tau/2$ further from its boundary. Continuing this by extending the boundary, we obtain an embedded minimal hypersurface $\mathcal F_j$ (with boundary) in $C_j$; which is a limit of the components of the $F_{ij}$ of the subsequence in $C_j$ as $i \to \infty$. The sequence converges $C^2$ to $\mathcal F_j$ ($\mathcal F_j$ compact) so $\mathcal F_j$ together with a compact domain on $\partial C_j$ is homologous to $C_j \cap \mathcal S_1$ (cf. \cite[Lemma 1.1]{WMeeHRos05} for a detailed proof in dimension three).

Now repeat this process starting with the above subsequence of the $F_i$’s and look at the components intersecting $C_{2j}$ (assume $i > 2j$) that equal the components in $C_j$ we made limit to $P_j \subset T_{x_j}\amb$. A subsequence of this family converges to a limit $\mathcal F_{2j}$ that equals $\mathcal F_{j}$ in $C_j$; we have the same local area and curvature estimates in the balls $\mathcal B_{x_k} (2\delta)$ covering $C_{2j}$. Then, letting $j \to \infty$, we obtain the complete, embedded, orientable, area-minimizing ${\rm mod}(2)$ hypersurface $\mathcal F \subset \amb$. Moreover, Since $\mathcal F \cap C_j$ is homologous to $C_j \cap \mathcal S_1$ for all $j \in \mathbb{N}$, then the limit hypersurface $\mathcal F$ is homologous to $\mathcal S_1$.
\end{proof}

We showed above that if $q \in F_{ij} \setminus \mathcal D_p (2\tau)$, then $q$ cannot be too close (in the extrinsic distance) to $p$. This argument also proves:

\begin{theorem}[Tubular Neighborhood Theorem for ${\rm mod}(2)$-minimizers]\label{Mod2Proper}
Let $(\amb^n, g)$, $3 \leq n \leq 7$, be a complete, orientable manifold of bounded geometry. If $\mathcal F \subset \amb$ is a complete, orientable, embedded hypersurface that is area-minimizing ${\rm mod}(2)$, then $\mathcal F$ is proper. Moreover, there exists $\epsilon >0$ (depending only on the bounds for the sectional curvatures and injectivity radius of $\amb$ and the second fundamental form of $\mathcal F$) such that ${\rm Tub}_\epsilon (\mathcal F)$ is embedded.
\end{theorem}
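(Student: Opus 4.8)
The plan is to repeat, now with $\mathcal{F}$ in place of the compact slices $F_{ij}$, the local-graph and cut-and-paste argument used at the end of the proof of Lemma~\ref{LemF}, and then to read off both assertions from the resulting uniform separation of sheets. The first step is a uniform curvature bound for $\mathcal{F}$. Cover $\amb$ by the countable family of strictly convex \emph{small} balls $\mathcal{B}_{x_k}(2\delta)$ fixed in the proof of Lemma~\ref{LemF}, with the $\mathcal{B}_{x_k}(\delta)$ already covering $\amb$ and $\mathcal{H}^{n-1}(\partial\mathcal{B}_{x_k}(2\delta)) \le c\,\delta^{n-1}$, $c$ independent of $k$. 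Since $\mathcal{F}$ is area-minimizing ${\rm mod}(2)$ it has locally finite mass, so for almost every radius $\rho \in (0,2\delta)$ the sphere $\partial\mathcal{B}_{x_k}(\rho)$ meets $\mathcal{F}$ transversally in a closed embedded hypersurface, and the two-coloring argument of Claim~A applies in $\mathcal{B}_{x_k}(\rho)$ to give $\mathcal{H}^{n-1}(\mathcal{F}\cap\mathcal{B}_{x_k}(2\delta)) \le c\,\delta^{n-1}$. As in Claim~B, the Schoen-Simon-Yau estimates \cite{RSchLSim81,RSchLSimSYau75} then yield $\sup_{\mathcal{F}\cap\mathcal{B}_{x_k}(\delta)} |A_{\mathcal{F}}|^2 \le C(n,c)\,\delta^{-2}$, a bound independent of $k$; hence there is a radius $\tau>0$, depending only on the curvature bounds of $\amb$ and on $C(n,c)$, such that every intrinsic ball $\mathcal{D}_p(\tau)\subset\mathcal{F}$ is, in normal geodesic coordinates at $p$, the graph over a domain in $T_p\mathcal{F}$ of a function with uniformly bounded $C^2$ norm.

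Granting this, I would run the ``important remark'' of the proof of Lemma~\ref{LemF} essentially verbatim. If $p,q\in\mathcal{F}$ satisfy $q\notin\mathcal{D}_p(2\tau)$, then $\mathcal{D}_p(\tau)$ and $\mathcal{D}_q(\tau)$ are disjoint, since $\mathcal{F}$ is embedded; and if ${\rm dist}_{\amb}(\iota(p),\iota(q))$ were too small, then compactness of the family of minimal graphs with uniformly bounded $C^2$ norm would force $\mathcal{D}_p(\tau/2)$ and $\mathcal{D}_q(\tau/2)$ to be disjoint, mutually $C^2$-close graphs over a common domain ${\rm exp}(U_p)$, $U_p\subset T_p\mathcal{F}$. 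Their boundaries would then cobound a thin annular hypersurface $\mathcal{A}$ of small volume, and replacing $\mathcal{D}_p(\tau/2)\cup\mathcal{D}_q(\tau/2)$ by $\mathcal{A}$ would produce a hypersurface congruent to $\mathcal{F}$ ${\rm mod}(2)$ of strictly smaller volume, contradicting that $\mathcal{F}$ is area-minimizing ${\rm mod}(2)$. This yields $\epsilon>0$, depending only on $\tau$ (hence only on the stated bounds), such that ${\rm dist}_{\amb}(\iota(p),\iota(q)) \ge 3\epsilon$ whenever $q\notin\mathcal{D}_p(2\tau)$. Shrinking $\epsilon$ further, if necessary, so that the normal exponential map $(p,t)\mapsto{\rm exp}_p(tN(p))$ --- where $N$ is a global unit normal, which exists since $\mathcal{F}$ is two-sided --- is an embedding on each $\mathcal{D}_p(\tau)\times[-\epsilon,\epsilon]$, the two facts together show that $F:\mathcal{F}\times[-\epsilon,\epsilon]\to\amb$ is an embedding; that is, ${\rm Tub}_\epsilon(\mathcal{F})$ is embedded.

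Properness then follows from the same separation estimate. If $\iota:\mathcal{F}\to\amb$ were not proper there would be a compact $K\subset\amb$ with $\iota^{-1}(K)$ non-compact, hence --- $\iota^{-1}(K)$ being closed and $\mathcal{F}$ complete --- unbounded in $\mathcal{F}$; choose $q_k\in\iota^{-1}(K)$ with ${\rm dist}_{\mathcal{F}}(q_0,q_k)\to\infty$, and after passing to a subsequence assume $\iota(q_k)\to p_\infty\in K$ and ${\rm dist}_{\mathcal{F}}(q_k,q_l)>2\tau$ for all $k\ne l$. Then $q_l\notin\mathcal{D}_{q_k}(2\tau)$, so ${\rm dist}_{\amb}(\iota(q_k),\iota(q_l))\ge 3\epsilon$, contradicting $\iota(q_k),\iota(q_l)\to p_\infty$. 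Hence $\mathcal{F}$ is proper. The one place I expect real work is the first paragraph: Claims~A and~B were written for the compact slices $F_{ij}$, and the remark following Claim~B only asserts the curvature estimate for \emph{properly} embedded ${\rm mod}(2)$-minimizers, so one must check carefully that the local, competitor-based definition of ``area-minimizing ${\rm mod}(2)$'', together with the interior regularity theory \cite[Theorem 9.14]{White} and local finiteness of mass, does let the two-coloring bound and the Schoen-Simon-Yau estimate run ball-by-ball on a complete, a priori non-proper, $\mathcal{F}$. Once that uniform curvature bound is established, the separation and properness arguments are a routine repeat of the end of the proof of Lemma~\ref{LemF}.
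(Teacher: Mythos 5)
Your proposal is correct and follows essentially the same argument as the paper: a uniform curvature bound from the local area estimate and Schoen--Simon--Yau, the local graph structure, the cut-and-paste separation estimate (this is exactly the paper's Claim~A), and then properness and tube embeddedness read off from the resulting uniform separation of sheets. You have also correctly flagged the one real subtlety the paper glosses over --- the paper's proof begins ``By Claims A and B in Lemma~\ref{LemF}, $\mathcal F$ has bounded second fundamental form,'' but those claims are stated for compact slices and the remark after Claim~B asserts the estimate only for \emph{properly} embedded ${\rm mod}(2)$-minimizers, which is precisely what one is trying to prove; your proposed fix (running the two-coloring bound and curvature estimate ball-by-ball, using only local finiteness of mass and interior regularity of a locally ${\rm mod}(2)$-minimizing $\mathcal F$) is the right way to close that circularity.
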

\begin{proof}
By Claims A and B in Lemma \ref{LemF}, $\mathcal F$ has a bounded second fundamental form. Hence, there exists $\tau >0$ (depending on the curvature bounds) such that the intrinsic metric ball of radius $\tau$ centered at $p \in \mathcal F$, $\mathcal D_{p}(\tau)$, is the graph (in normal geodesic coordinates in $\amb$) of a function $u_p$ defined on ${\rm exp}(D_p)$, $D_p \subset T_p \mathcal F$ an open neighborhood ${\bf 0} \in T_p \mathcal F$ in $T_p \mathcal F$, such that $\partial \mathcal D _p (\tau) \cap \mathcal B _p (\tau /2) = \emptyset$, here $\mathcal B _p (\tau /2)$ is the metric ball of radius $\tau/2$ centered at $p$ in $\amb$. The result will be proven if we show:

\begin{quote}
{\bf Claim A:} {\it There exists $\epsilon >0$ (depending on $\tau$) such that for any two points $p,q \in \mathcal F$ with ${\rm dist}{\mathcal F}(p,q) > 2\tau$, then ${\rm dist}_{\amb}(p,q) > \epsilon$.}
\end{quote}
\begin{proof}[Proof of Claim A]
By contradiction, assume there exists $p \in \mathcal F$ and a sequence $\set{p_n}_{n \in \np} \subset \mathcal F$ such that ${\rm dist}_{\mathcal F}(p,p_n) > 2\tau$ and ${\rm dist}_{\amb}(p,p_n) < 1/n$. By compactness of minimal graphs, $\mathcal D_{p_n} (\tau /2)$ and $\mathcal D_p (\tau /2)$ are graphs over the same domain ${\rm exp}(U_p)$, $U_p \subset T_p \mathcal F$ for $n$ large enough, that are $C^2$-close. Also $\mathcal D_{p_n} (\tau /2) \cap \mathcal B _p (\tau /4)$ and $\mathcal D_p (\tau /2) \cap \mathcal B _p (\tau /4)$ are $C^2$-close and they are the boundary of a (annular) hypersurface $\mathcal A \subset \partial \mathcal B _p (\tau /4)$ of small volume. Replace $\mathcal F$ by removing $\left( \mathcal D_p (\tau /2) \cup \mathcal D_q (\tau /2) \right) \cap \mathcal B _p (\tau /4)$ and adding $\mathcal A$. This is congruent to $\mathcal F \, {\rm mod}(2)$ and of less volume than $\left( \mathcal D_p (\tau /2) \cup \mathcal D_q (\tau /2) \right) \cap \mathcal B _p (\tau /4)$ if $p_n$ is sufficiently close to $p$. This is impossible since $\mathcal F$ is area-minimizing ${\rm mod}(2)$. This proves Claim A.
\end{proof}

Then, Claim A above proves the theorem.
\end{proof}

\section{Maximum Principle at Infinity}\label{Sect:MPI}

We begin this section by extending Mazet's Theorem to dimensions up to seven.

\begin{theorem}\label{MPI}
Let $\amb^n$, $3 \leq n \leq 7$, be a complete manifold of bounded geometry. Suppose $\Sigma \subset \amb^n$ is a two-sided properly embedded minimal hypersurface that has an $\epsilon$-tube ${\rm Tub}_\epsilon (\Sigma)$ that is embedded, well-oriented, and of bounded extrinsic geometry. If $\Sigma$ is parabolic, then the Maximum Principle at Infinity holds for $\Sigma$. That is, if $S$ is a proper minimal hypersurface such that $\Sigma \cap S = \emptyset$ and $S \cap {\rm Tub}_\epsilon (\Sigma) \neq \emptyset$, then $S$ is an equidistant hypersurface to $\Sigma$.
\end{theorem}

We outline Mazet’s proof and highlight the necessary changes to generalize it to dimensions up to seven. Let $S$ be the properly immersed minimal hypersurface entering the $\epsilon$-tube of the parabolic minimal hypersurface $\Sigma$.

Mazet begins by explaining that if $S$ is stable, his proof will proceed in a manner he will consider later. He first discusses his proof when $S$ is not stable. In this case, $S$ yields a stable $S'$ to work with instead of the original $S$. Then he continues the proof with this new $S'$. The important properties used of $S'$ are the area and curvature estimates for stable minimal surfaces in dimension three: they only depend on the bounded ambient geometry and the distance to the boundary, not on local area bounds. Mazet then proceeds with his proof using this $S'$ (cf. \cite[Section 5.2]{LMaz13}).

In higher dimensions, stability is not enough to obtain such area and curvature estimates, and we must replace the stable $S'$ with an area-minimizer ${\rm mod}(2)$ in the homology class, as described in Lemma \ref{LemF}, which possesses area and curvature estimates as explained in Section \ref{Sect:Mini}.

We proceed with the construction of such $S'$. First, observe that, if $S \cap {\rm Tub}_\epsilon (\Sigma) \neq \emptyset$ contains a compact connected component $S_1 \subset S \cap {\rm Tub}_\epsilon (\Sigma)$, then $S$ is an equidistant hypersurface of $\Sigma$. This follows from the Maximum Principle and the hypothesis that ${\rm Tub}_\epsilon (\Sigma)$ is well-oriented (cf. \cite[Section 6.2.1]{LMaz13}). Assume that $S$ is not an equidistant hypersurface of $\Sigma$. Then one has two possibilities:
\begin{enumerate}
\item[(1)] {\bf $S_1$ is non-compact and not stable:} The construction of $S'$ follows the lines of \cite[Section 5.1.2 and Section 5.1.3]{LMaz13} in the minimal case by replacing the construction of compact stable minimal surfaces with compact area-minimizing ${\rm mod}(2)$ hypersurfaces using Theorem \ref{ThCurrent} and taking limits now using Lemma \ref{LemF}.

\item[(2)] {\bf $S_1$ is non-compact and stable:} For higher dimensions, we must explain what to do if the minimal hypersurface $S_1$ one start with was stable; $S_1$ may not have local area bounds. Here, we use a trick discovered by A. Song \cite[Lemma 17]{ASon18}. Let $p \in S_1$ be a point where a neighborhood of $S_1$ is embedded. There is a small extrinsic metric ball $\mathcal B_p (\delta)$ of $\Sigma \times [0, \epsilon]$, centered at $p$ with radius $\delta >0$ small, and a smooth Riemannian metric on $\Sigma \times [0, \epsilon]$ equal to the original metric on the complement of $\mathcal B$, and there is an unstable minimal hypersurface $\tilde S$ with $\partial \tilde S = \partial S_1$ in the compact cylinder $K_n \times [0, \epsilon]$; $K_n$ is a compact subset of $\Sigma$. Finally, proceed as above with this unstable $\tilde S$.
\end{enumerate}

Hence, in any case, we can construct a non-compact area-minimizing ${\rm mod}(2)$ hypersurface $S'$ possibly contained in a smaller tube of $\Sigma$, that is, $S' \subset {\rm Tub}_{\epsilon'} (\Sigma)$ for $\epsilon' \in (0, \epsilon)$ and $\partial S' \subset \Sigma (\epsilon')$. Then, area and curvature estimates shown in Section \ref{Sect:Mini} prove that $S'$ is (locally) quasi-isometric to $\Sigma$, possibly replacing $\epsilon'$ with a smaller $\tilde \epsilon \in (0, \epsilon')$, following \cite[Section 6.2.1]{LMaz13}. Hence, since $\Sigma$ is parabolic, $S'$ is parabolic with boundary \cite[Proposition 1]{LMaz13}. One then constructs a superharmonic function $h$ on $S$ as in \cite[Section 6.1 and page 824]{LMaz13} of the form $f \circ d$, where $d := {\rm dist}_{\amb}(\cdot, \Sigma)$. Then, $h$ must be constant, a contradiction. So $S$ is an equidistant hypersurface to $\Sigma$. This finishes the proof of Theorem \ref{MPI}.

\subsection{Existence of an embedded $\epsilon$-tube}

We proceed in this section by examining the conditions under which a properly embedded minimal hypersurface has an embedded $\epsilon$-tube. Let $\Sigma^{n-1} \subset \amb^n$ be a two-sided properly embedded $C^2-$hypersurface and $\amb $ complete. Define the map $\Psi : \Sigma \times \r \to \amb$ given by 
\begin{equation}\label{F}
\Psi (p,t) := {\rm exp}_p (t N(p))
\end{equation}
and, since $\Sigma$ is at least $C^2$, there exists an open neighborhood $\Omega$ of $\Sigma$ in $\amb$ formed by non-intersecting minimal geodesics starting normally from $\Sigma$. Hence, by possibly choosing a smaller neighborhood still denoted by $\Omega$, the map $\Psi^{-1}$ is well-defined and smooth in $\Omega $, and there exists an open subset $\Lambda \subset \Sigma \times \r $ with the property that if $(p,t) \in \Lambda$, then also $(p,s) \in \Lambda$ for all $ |s| < |t|$. Hence, the map $\Psi_{|\Lambda} : \Lambda \to \Omega $ is a diffeomorphism and, for every point $p \in \Omega$, there exists a unique point of minimum extrinsic distance to $\Sigma$; that is, $\Omega$ has the {\it unique projection property} (see \cite{LAmbCMan98, LAmbHSon96}). Let $\set{K_i}_{i \in \mathbb N}$ be a compact exhaustion of $\Sigma$ so that $K_i \subset K_{i+1}$ and define 
\begin{equation}\label{Epsiloni}
\epsilon_i := {\rm inf}_{K_i} {\rm sup} \set{t > 0 \, : \, \, (p, s) \in \Lambda \text{ for all } 0 < |s| < t \text{ and } p \in K_i}.
\end{equation}

Observe that if $\epsilon := {\rm inf}\set{\epsilon_i  \, : \, \, i \in \mathbb{N}} > 0$, the above construction gives an embedded $\epsilon$-tube. The existence of an embedded $\epsilon$-tube is crucial when we want to apply Theorem \ref{MPI}. In this sense, following ideas from \cite{WMeeHRos08, HRos01}, we can assert: 

\begin{theorem}[Tubular Neighborhood Theorem]\label{ThETube}
Let $(\amb^n, g)$, $3 \leq n \leq 7$, be a complete manifold of bounded geometry and $\Ricb \geq 0$. Let $\Sigma \subset \amb$ be a properly embedded, two-sided, parabolic, minimal hypersurface of bounded second fundamental form. Then, there exists $\epsilon > 0$ (depending only on the bounds for the sectional curvatures and injectivity radius of $\amb$ and the second fundamental form of $\Sigma$) such that ${\rm Tub}_\epsilon (\Sigma)$ is embedded, well-oriented and of bounded extrinsic geometry (cf. Definition \ref{ETube}).
\end{theorem}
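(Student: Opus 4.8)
The plan is to establish the three conclusions in order: first that $\mathrm{Tub}_\epsilon(\Sigma)$ is embedded for some uniform $\epsilon>0$, then that it is well-oriented, then that it has bounded extrinsic geometry. For the embeddedness, the obstruction to $F$ from \eqref{F} being an embedding on $\Sigma\times[-\epsilon,\epsilon]$ is of two types: a \emph{local} obstruction, where the normal exponential map develops focal points because the second fundamental form is large, and a \emph{global} (or ``necking'') obstruction, where two faraway points of $\Sigma$ come close in the ambient distance and their normal geodesics collide. The local part is handled by the bound on $|A|$ together with the bound on sectional curvature: standard Jacobi field comparison (or the Riccati comparison for the shape operator of the equidistants) shows there is $\epsilon_0>0$, depending only on those two bounds, beyond which no focal point occurs and $F$ restricted to a geodesic ball of fixed intrinsic radius $r_0$ in $\Sigma$, times $[-\epsilon_0,\epsilon_0]$, is a diffeomorphism onto its image. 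In the notation of \eqref{Epsiloni}, this gives a uniform positive lower bound on each $\epsilon_i$ coming from the local behavior; what must still be ruled out is that $\epsilon:=\inf_i\epsilon_i$ drops to zero because of the global necking.

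\textbf{Ruling out necking — the main obstacle.} This is exactly where the hypotheses ``parabolic'' and ``$\Ricb\ge 0$'' enter, and it is the heart of the argument; it is the analogue of the Meeks--Rosenberg tubular neighborhood construction \cite{WMeeHRos08,HRos01}. Suppose, for contradiction, that $\epsilon=0$: then there are sequences of points $p_k,q_k\in\Sigma$ with ${\rm dist}_\Sigma(p_k,q_k)$ bounded below by a fixed constant (so $q_k$ is not in the local graph neighborhood of $p_k$) but ${\rm dist}_\amb(p_k,q_k)\to 0$. Translate the picture so that the $p_k$ sit at a fixed base point and pass to a limit: by the curvature and $|A|$ bounds the pointed surfaces $(\Sigma,p_k)$ subconverge (smoothly, on compact sets, with multiplicity that could a priori be $\ge 2$) to a complete limit minimal hypersurface $\Sigma_\infty$ in a limit ambient space; the two sheets through $p_k$ and $q_k$ limit to two embedded minimal leaves that touch at one point. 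By the maximum principle the two leaves coincide, so the convergence is with multiplicity $\ge 2$ near that point, and $\Sigma_\infty$ carries a nonzero limit Jacobi field that is $\ge 0$ (the ``separation function'' between the two sheets); this Jacobi field, or the geometry of the limit leaf, forces $\Sigma_\infty$ to be a \emph{limit leaf} of a minimal lamination and hence \emph{stable} (limit leaves of minimal laminations are stable, and two-sidedness is preserved). Now invoke Proposition~\ref{PropParabolicStable}: one must check the limit leaf is parabolic — this follows because it is a smooth limit of pieces of the parabolic $\Sigma$ with uniformly bounded geometry, so it is quasi-isometric to subdomains of $\Sigma$ and inherits parabolicity — so $\Sigma_\infty$ is totally geodesic with $\Ricb_\infty(N,N)\equiv 0$. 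Then the nonnegative Jacobi function is harmonic on the parabolic $\Sigma_\infty$ (since $|A|^2+\Ricb(N,N)=0$ there), hence constant; a positive constant separation is incompatible with the sheets touching, a contradiction. Therefore $\epsilon>0$ and the tube is embedded. The delicate point to get right is the passage to the limit lamination and the verification that parabolicity survives it, together with ruling out the multiplicity-one transverse-intersection case (impossible by embeddedness of $\Sigma$, exactly as in the argument already used in Theorem~\ref{Mod2Proper}).

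\textbf{Well-orientedness.} With $\epsilon>0$ fixed as above, the equidistant hypersurfaces $\Sigma(\epsilon')$, $\epsilon'\in[-\epsilon,\epsilon]\setminus\{0\}$, are the level sets of the signed distance function $d={\rm dist}_\amb(\cdot,\Sigma)$, and their mean curvature $H(\epsilon')$ (with the normal pointing away from $\Sigma$) satisfies the Riccati-type evolution $\partial_{\epsilon'}H(\epsilon')=\tfrac{1}{n-1}\big(|A_{\epsilon'}|^2+\Ricb(\partial_{\epsilon'},\partial_{\epsilon'})\big)\ge \tfrac{1}{n-1}|A_{\epsilon'}|^2\ge 0$ using $\Ricb\ge 0$; this is the standard first variation of mean curvature along the geodesic flow, the same formula as \eqref{1VH} applied to the equidistant foliation. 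Since $\Sigma$ is minimal, $H(0)=0$, so by this monotonicity $H(\epsilon')\ge 0$ for $\epsilon'>0$ and, after the sign change for the opposite orientation, $H$ is also nonnegative on the $\epsilon'<0$ side with the away-from-$\Sigma$ orientation; hence every equidistant is mean convex in the required sense and the tube is well-oriented.

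\textbf{Bounded extrinsic geometry.} Finally, shrinking $\epsilon$ if necessary (still in a way that depends only on the two bounds), the flow $p\mapsto F(p,\epsilon')$ is, for each fixed $\epsilon'$, a diffeomorphism $\Sigma\to\Sigma(\epsilon')$ whose differential is controlled, uniformly in $p$ and $\epsilon'$, by Jacobi field estimates: the metric distortion is bounded by a constant $k$ close to $1$ (from the sectional curvature bound on the geodesic balls of radius $\epsilon$), so each $\Sigma(\epsilon')$ is $k$-quasi-isometric to $\Sigma$. The second fundamental form $A_{\epsilon'}$ of $\Sigma(\epsilon')$ solves the Riccati equation $\partial_{\epsilon'}A_{\epsilon'}=A_{\epsilon'}^2+R_{\epsilon'}$ with initial data $A_0=A$ bounded and curvature operator $R_{\epsilon'}$ bounded; on the short interval $[-\epsilon,\epsilon]$ this ODE comparison yields a uniform bound $|A_{\epsilon'}|\le C$ with $C$ depending only on the bounds for $|A|$ and the sectional curvatures. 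This is precisely condition~(c) of Definition~\ref{ETube}, completing the proof. \qed
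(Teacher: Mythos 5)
Your proof of well-orientedness and bounded extrinsic geometry matches the paper's: both use the first variation of mean curvature \eqref{1VH} with $\Ricb\ge 0$ and Riccati/Jacobi comparison (Karcher) on the short time interval. The embeddedness argument, however, takes a genuinely different route from the paper, and that route has a gap.

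You attack the global (``necking'') obstruction by a pointed blow-up: translate escaping points $p_k$ to a fixed base, pass to a limit lamination, extract a stable limit leaf $\Sigma_\infty$ with a nonnegative Jacobi function, then invoke Proposition~\ref{PropParabolicStable} to make $\Sigma_\infty$ totally geodesic with $\Ricb(N,N)\equiv 0$, so the Jacobi function is harmonic and constant by parabolicity of $\Sigma_\infty$. The weak link is the sentence asserting $\Sigma_\infty$ is parabolic ``because it is a smooth limit of pieces of the parabolic $\Sigma$ \ldots so it is quasi-isometric to subdomains of $\Sigma$ and inherits parabolicity.'' This does not hold up. Parabolicity is a large-scale, global property; a complete leaf arising as a pointed limit of $(\Sigma,p_k)$ with $p_k\to\infty$ is only locally modeled on $\Sigma$ on fixed scales, not globally quasi-isometric to $\Sigma$, and being quasi-isometric to an increasing family of subdomains of a parabolic manifold carries no conclusion about the parabolicity of the limit. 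So the appeal to Proposition~\ref{PropParabolicStable} for $\Sigma_\infty$ is not justified, and the contradiction at the end relies on it. (Setting up the lamination carefully and identifying the Jacobi field is standard and not the problem; the problem is specifically feeding $\Sigma_\infty$ into a result whose hypothesis, parabolicity, you have not established for $\Sigma_\infty$.)

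The paper sidesteps this entirely. It pulls the metric back to the abstract product $(\Sigma\times[-\delta',\delta'],F^*g)$, where $\Sigma(0)\cong\Sigma$ \emph{itself} is the parabolic hypersurface with an embedded, well-oriented, bounded-extrinsic-geometry $\delta$-tube. If $F^{-1}(\Sigma)$ has a component $E\neq\Sigma_0$, Theorem $\tilde{\mathrm{A}}$ is applied \emph{inside this pullback tube}, with parabolicity supplied directly by the hypothesis on $\Sigma$, forcing $E$ to be an equidistant and $\amb$ to be a mapping torus (so the tube is embedded for $\epsilon$ below half the period). If $F^{-1}(\Sigma)=\Sigma_0$, a short, entirely local argument (estimate \eqref{DistF} plus local injectivity of $F$ on $D_p(\delta')\times(-\delta',\delta')$) derives the contradiction without any limiting procedure. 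If you want to rescue the lamination route, you would need either to prove parabolicity of the limit leaf under the stated hypotheses, or to reformulate so that the harmonic/Jacobi argument takes place on $\Sigma$ rather than on a limit; as written, the step does not go through.
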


\begin{remark}
An ideal triangle in the hyperbolic plane shows the hypothesis of non-negative Ricci curvature is necessary.
\end{remark}

\begin{proof}[Proof of Theorem \ref{ThETube}]
Since there exists a constant $C > 0$ such that ${\rm inj}(\amb) \geq 1/C$, $\abs{\Kb_{ij}} \leq C$, and $\abs{A} \leq C$, there exists $\delta’ > 0$ (depending only on $C$) such that the map $\Psi: \Sigma \times (-\delta’, \delta’) \to \amb$, given by \eqref{F}, is smooth and for each $p \in \Sigma$, $\Psi: D_p (\delta’) \times (-\delta’, \delta’) \to \amb$ is a diffeormophism onto its image. Let $\Psi^*g$ denote the pullback metric on $\Sigma \times (-\delta, \delta)$ induced by the map $\Psi: \Sigma \times (-\delta’, \delta’) \to (\amb, g)$.

Observe that $(\Sigma \times [-\delta, \delta], \Psi^*g)$, for any $\delta \in (0, \delta’)$, defines an embedded $\delta$-tube for $\Sigma(0) := \Sigma \times \set{0}$. Moreover, by the First Variation Formula for the Mean Curvature \eqref{1VH} and the fact that the Ricci curvature of $\amb$ is non-negative, the mean curvature of $\Sigma(t) = \Sigma \times \set{t}$ satisfies $H’(t) \geq 0$ for all $t \in (-\delta, \delta)$ (recall Lemma \ref{Foliation}); this implies that $(\Sigma \times [-\delta, \delta], \Psi^*g)$ is well-oriented. Also, by \cite[pages 179-180]{HKar87} and the fact that the geometry of $\amb$ and $\Sigma$ are bounded by $C$ from above, all the equidistants $\Sigma(t)$ have a bounded second fundamental form and are quasi-isometric to $\Sigma$ (depending on $C$ and $\delta$). That is, $(\Sigma \times [-\delta, \delta], \Psi^*g)$ has bounded extrinsic geometry (cf. Definition \ref{ETube}). Now, $\Sigma(0) \subset \Psi^{-1}(\Sigma)$ and $\Psi_{|\Sigma(0)}$ is injective.

\begin{quote}
{\bf Claim A:} {\it If there are components of $\Psi^{-1}(\Sigma)$ other than $\Sigma (0)$, then $\amb$ is a mapping torus over $\Sigma$. That is, there exists $\tau > 0$ such that $\amb$ is isometric to $\Sigma \times [0, \tau]$, with the product metric, where $\Sigma \times \set{0}$ is identified with $\Sigma \times \set{\tau}$ by an isometry. In this case, ${\rm Tub}_\epsilon (\Sigma)$ is embedded, well-oriented, and of bounded extrinsic geometry (cf. Definition \ref{ETube}) for any $\epsilon < \tau / 2$.}
\end{quote}
\begin{proof}[Proof of Claim A]
Let $E \subset \Psi^{-1}(\Sigma) \setminus \Sigma (0)$ be such a component. $E$ is a proper minimal hypersurface in $(\Sigma \times [-\delta, \delta], \Psi^*g)$ disjoint from $\Sigma (0)$. Since $\Sigma$ is proper, for some $p \in \Sigma$, we can suppose $E$ is the nearest component of $\Psi^{-1}\left(\Sigma\right) \setminus \Sigma (0)$ in the extrinsic distance in $(\Sigma \times [-\delta, \delta], \Psi^*g)$ at one side of $\Sigma (0)$ (cf. Figure \ref{Figure2}).

By Theorem \ref{MPI}, such a component $E \subset \Psi^{-1}(\Sigma)$ is an equidistant of $\Sigma(0)$; so $E = \Sigma(\tau)$ for some $\tau \in (0, \delta)$. Thus, by Lemma \ref{Foliation}, the metric on $\Sigma \times [0, \tau]$ is a product metric $\Psi^*g = \sigma + dt^2$, $\sigma$ the induced metric on $\Sigma (0)$.

\begin{figure}[htbp]
\begin{center}
\includegraphics[scale=0.2]{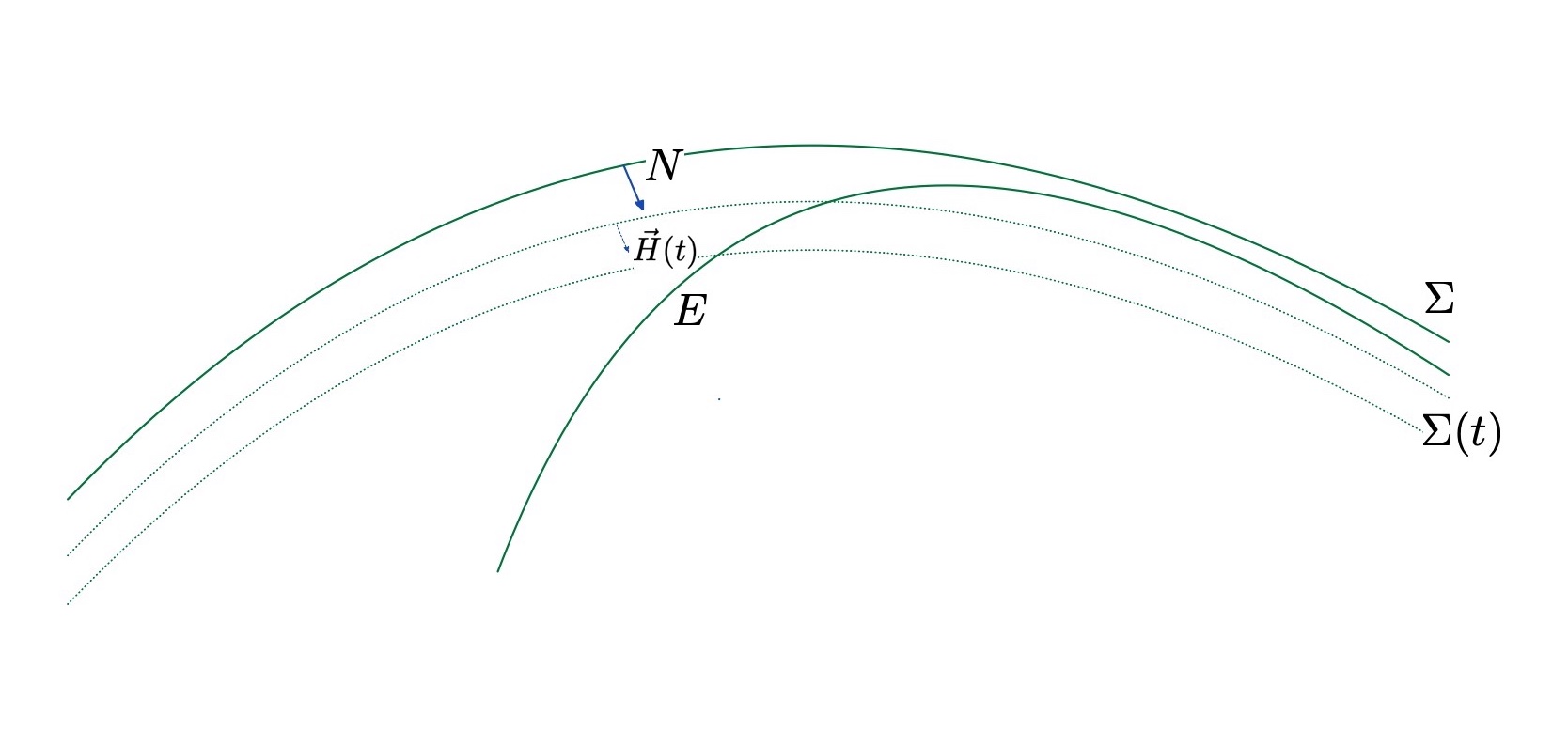}
\caption{$E$ is a proper minimal hypersurface in $(\Sigma \times [-\delta, \delta], \Psi^*g)$}
\label{Figure2}
\end{center}
\end{figure}

Moreover, $\Psi(\Sigma(0)) = \Sigma = \Psi(\Sigma(\tau))$, and hence, $\Psi(\Sigma \times [0, \tau])$ is open and closed in $\amb$, which implies that $\amb$ is a mapping torus over $\Sigma$ and proves Claim A.
\end{proof}

So suppose from now on that $\Psi^{-1}(\Sigma) = \Sigma (0)$. Let $0 < \tau < \delta$ and assume $\Psi: \Sigma \times (-\tau, \tau) \to \amb$ is not an embedding. We will show this gives a contradiction if $\tau$ is sufficiently small.

Let $z_1, z_2 \in \Sigma \times (-\tau, \tau)$, $\Psi(z_1) = \Psi(z_2)$, $z_1 \neq z_2$. We have $z_k = (x_k, t_k)$, $k = 1, 2$, with $|t_k| < \tau$. Since $\Psi: D_p (\delta) \times (-\tau, \tau) \to \amb$ is injective for all $p \in \Sigma$, we have $x_i \not\in D_{x_j}(\delta)$ for $i \neq j$. Also,
\begin{equation}\label{DistF}
\begin{split}
{\rm dist}_{\amb} (\Psi(x_1, 0), \Psi(x_2, 0)) & \leq {\rm dist}_{\amb}(\Psi(x_1, 0), \Psi(z_1)) + {\rm dist}_{\amb}(\Psi(z_1), \Psi(x_2, 0)) \\[2mm]
& = {\rm dist}_{\amb}(\Psi(x_1, 0), \Psi(z_1)) + {\rm dist}_{\amb}(\Psi(z_2), \Psi(x_2, 0)) \\[2mm]
&= |t_1| + |t_2| < 2 \tau.
\end{split}
\end{equation}

Now, $\bar \Psi := \Psi_{| \mathcal{B}_{x_1}(\delta) \times (-\delta, \delta)}$ is an isometry onto an open neighborhood $\mathcal U \subset \amb$ of $x_1$. Choose a large integer $k \in \mathbb{N}$ so that for $\tau = \delta / k$, the geodesic ball of $\amb$ centered at the point $x_1 = \Psi(x_1, 0) \in \amb$ of radius $2 \tau > 0$, $\mathcal B_{x_1} (2 \tau)$, is contained in $\mathcal U$. So, $x_2 = \Psi(x_2, 0) \in \mathcal B_{x_1} (2 \tau) \cap \Sigma$ by \eqref{DistF}. The point $y := \bar \Psi^{-1}(\Psi(x_2, 0))$ is in $D_{x_1}(\delta) \times (-\delta, \delta)$ and on a connected component of $\Psi^{-1}(\Sigma)$ disjoint from $D_{x_1}(\delta) \times (-\delta, \delta)$. This is a contradiction.

Therefore, there exists an $\epsilon > 0$, only depending on $C$, such that ${\rm Tub}_\epsilon (\Sigma)$ is embedded, well-oriented, and of bounded extrinsic geometry. This finishes the proof.
\end{proof}

\section{Splitting and non-separating hypersurfaces}\label{Sect:Spli}

We deal first with quasi-embedded non-separating hypersurfaces $\Sigma \subset \amb$, loosely speaking, those that do not separate $\amb $ into two components. When $\Sigma$ is embedded and compact, we can cut-open the manifold $\amb$ through $\Sigma$ using a local diffeomorphism to obtain a new manifold $\anb$ with two boundary components, each one diffeomorphic to $\Sigma$, and ${\rm int}(\anb)$ isometric to $\amb \setminus \Sigma$ (see \cite{LMazHRos17} for details). We will extend this for properly embedded hypersurfaces, not necessarily compact.

\begin{proposition}\label{NonSeparating}
Let $\Sigma ^{n-1}\subset \amb ^n$ be a connected, non-separating, two-sided properly embedded hypersurface in a complete Riemannian manifold $\amb$. There is a Riemannian manifold $\anb $ with two boundary components $\Sigma _1$ and $\Sigma_2$ and a local diffeomorphism $\varphi : \anb \to \amb $ such that $\varphi : \anb \setminus (\Sigma _1 \cup \Sigma _2) \to \amb \setminus \Sigma$ is a diffeomorphism and $\varphi : \Sigma _i \to \Sigma$ is a diffeomorphism for $i=1,2$.
\end{proposition}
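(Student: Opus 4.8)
The plan is to build $\anb$ as a quotient of a suitable covering space of $\amb$ and to transfer the metric. First I would fix a point $p \in \Sigma$ and a simple closed curve $\gamma \subset \amb$ meeting $\Sigma$ transversally in exactly the one point $p$ (this is exactly the non-separating hypothesis; for a two-sided properly embedded $\Sigma$ the mod-$2$ intersection number of $\gamma$ with $\Sigma$ is then odd). The class $[\gamma]$ determines a nontrivial homomorphism $\pi_1(\amb) \to \zz_2$ via the mod-$2$ intersection pairing with $\Sigma$; let $\rho : \pi_1(\amb) \to \zz_2$ be this homomorphism and let $\pi : \hat \amb \to \amb$ be the associated (connected) double cover, with deck transformation $\iota : \hat \amb \to \hat \amb$ the nontrivial involution. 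The key point is that $\pi^{-1}(\Sigma) =: \hat\Sigma$ separates $\hat\amb$: a loop in $\hat\amb$ projects to a loop in $\amb$ whose intersection number with $\Sigma$ is even, so $\hat\Sigma$ is null-homologous mod $2$ in $\hat\amb$, hence bounds, hence separates $\hat\amb$ into two components $\hat\amb^+$ and $\hat\amb^-$, interchanged by $\iota$.

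Next I would set $\anb := \overline{\hat\amb^+}$, the closure of one of the two components, a manifold with boundary. Its boundary decomposes as $\partial\anb = \hat\Sigma$; I would show $\hat\Sigma$ itself need not be connected, but rather that $\pi$ restricted to $\hat\Sigma$ is a double cover of $\Sigma$ which, precisely because every normal geodesic from $\Sigma$ lifts to a path crossing from $\hat\amb^+$ to $\hat\amb^-$, is the \emph{trivial} double cover: a small two-sided tubular neighborhood $F(\Sigma\times(-\epsilon,\epsilon))$ of $\Sigma$ lifts to two disjoint copies, one on each side of $\hat\Sigma$, so each sheet of $\hat\Sigma$ maps diffeomorphically to $\Sigma$. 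Thus $\partial\anb = \Sigma_1 \sqcup \Sigma_2$ with $\Sigma_i = \pi^{-1}(\Sigma)\cap\overline{\hat\amb^+}$ the two components (here one checks using the tube $F$ that $\hat\Sigma$ has exactly two components and each lies in $\partial\anb$; the involution $\iota$ swaps the two local sides so the two sheets genuinely appear as distinct boundary components of $\anb$). I then take $\varphi := \pi|_{\anb} : \anb \to \amb$. By construction $\varphi$ is smooth and locally a diffeomorphism (it is a restriction of a covering map); on the interior $\anb\setminus(\Sigma_1\cup\Sigma_2) = \hat\amb^+$ it is injective onto $\amb\setminus\Sigma$ (any two preimages of a point of $\amb\setminus\Sigma$ differ by $\iota$, which moves $\hat\amb^+$ off itself), hence a diffeomorphism; and $\varphi|_{\Sigma_i}$ is the sheet projection, a diffeomorphism onto $\Sigma$. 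Finally $\anb$ inherits the Riemannian metric $\varphi^\ast g$, making $\varphi$ a local isometry and $\varphi|_{\anb\setminus\partial\anb}$ an isometry onto $\amb\setminus\Sigma$.

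The main obstacle, and the step deserving the most care, is verifying that $\hat\Sigma = \pi^{-1}(\Sigma)$ has exactly \emph{two} components, each mapping diffeomorphically onto $\Sigma$ and each appearing in $\partial\anb$ — equivalently, that the covering $\pi|_{\hat\Sigma} : \hat\Sigma \to \Sigma$ is trivial and that $\Sigma_1, \Sigma_2$ really are the two distinct boundary components of $\overline{\hat\amb^+}$ rather than both lifts landing on the same side. This is where two-sidedness of $\Sigma$ is essential: the existence of the global unit normal $N$ and the embedded normal tube $F(\Sigma\times(-\epsilon,\epsilon))$ from \eqref{F} lets me argue locally. A normal geodesic arc $t\mapsto F(p,t)$, $t\in(-\epsilon,\epsilon)$, lifts to an arc in $\hat\amb$ meeting $\hat\Sigma$ once; since that arc crosses from $\hat\amb^-$ to $\hat\amb^+$ transversally, the lifted tube has its ``$t>0$'' half in $\hat\amb^+$ and ``$t<0$'' half in $\hat\amb^-$, which pins down that the nearby sheet of $\hat\Sigma$ is a boundary sheet of $\anb$ and that $\iota$ carries it to the other sheet. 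Running this over a connected $\Sigma$ shows the monodromy of $\pi|_{\hat\Sigma}$ is trivial (a loop in $\Sigma$ has even intersection number with $\Sigma$ in $\amb$, so its lift closes up), giving two sheets; and the involution argument shows they sit in $\partial\anb$ as $\Sigma_1 \neq \Sigma_2$. Once this is settled the remaining assertions about $\varphi$ are formal consequences of covering space theory.
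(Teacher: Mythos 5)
Your proof is correct, but it takes a genuinely different route from the paper's. The paper performs a direct cut-and-glue construction: it builds a positive function $f$ on $\Sigma$ controlling the width of a one-sided collar, carves that collar out of $\amb$ to get $\amb_f$, and then glues back two copies of a half-collar $\tilde\Lambda^\pm$ along the annular regions $\Lambda^\pm$; the map $\varphi$ is the identity on $\amb_f$ and the exponential map $F$ on the glued pieces. Your approach instead passes to the connected double cover $\hat\amb \to \amb$ classified by the mod-$2$ intersection homomorphism with $\Sigma$, observes that both $\pi^{-1}(\amb\setminus\Sigma)\to\amb\setminus\Sigma$ and $\pi^{-1}(\Sigma)\to\Sigma$ are trivial double covers (every loop disjoint from $\Sigma$, or pushed off $\Sigma$ using two-sidedness, has zero intersection with $\Sigma$), and takes $\anb$ to be the closure of one component of the complement of $\hat\Sigma$. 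This is slicker and more conceptual --- it avoids building the auxiliary function $f$ and the somewhat delicate gluing identification --- at the cost of invoking covering-space theory and needing a short argument that the two sheets of $\hat\Sigma$ both land in $\partial\anb$.

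On that last point, your phrasing ``null-homologous mod $2$ hence bounds hence separates'' is imprecise for a noncompact $\hat\Sigma$; the clean statement is simply that the restricted cover over $\amb\setminus\Sigma$ is trivial, which gives exactly two components swapped by $\iota$. Similarly, the assertion that the lifted normal arc ``crosses from $\hat\amb^-$ to $\hat\amb^+$'' deserves a word of justification, since $\hat\Sigma$ is disconnected and a transverse crossing of a disconnected separating hypersurface need not change component a priori. The quickest fix: if both local sides of $\Sigma_1$ lay in $\hat\amb^+$, then $\hat\amb^+\cup\Sigma_1$ (and, applying $\iota$, $\hat\amb^-\cup\Sigma_2$) would be open, making $\overline{\hat\amb^+}=\hat\amb^+\cup\Sigma_1$ both open and closed in the connected $\hat\amb$, a contradiction; so each $\Sigma_i$ has one side in $\hat\amb^+$ and one in $\hat\amb^-$, and both appear as boundary components of $\anb$. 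With that tightened, your argument is complete and stands as a valid alternative to the paper's.
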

\begin{proof}
Let $\Omega$ be open neighborhood of $\Sigma$ in $\amb$ formed by non-intersecting minimal geodesics starting normally from $\Sigma$. Hence, by possibly choosing a smaller neighborhood still denoted by $\Omega$, the map $\Psi^{-1}$, given by \eqref{F}, is well-defined and smooth in $\Omega $, and there exists an open subset $\Lambda \subset \Sigma \times \r $ with the property that if $(p,t) \in \Lambda$, then also $(p,s) \in \Lambda$ for all $ |s| < |t|$. Take a compact increasing exhaustion $\set{K_i} _{i \in \mathbb N}$ of $ \Sigma$ and define $\epsilon _i >0$ given by \eqref{Epsiloni}. 
Now, we can construct a positive smooth function $f : \Sigma \to \r^+$ so that $2 f(p) < \epsilon _1$ for all $p \in K_1$ and $2 f(p) < \epsilon _i$ for all $p \in K_{i} \setminus K_{i-1}$ and $i \geq 2$. Then, define the open sets using \eqref{F} as
$$ \mathcal U ^\pm := \set{ \Psi(p,t) \in \amb : \, (p, t) \in  \Lambda   \text{ so that }  0< \pm t  < f(p)} $$then $\mathcal U ^\pm \subset \Omega \setminus \Sigma$ and, up to shrinking $\Omega$ if necessary, we can assume that $\mathcal U ^- \cap \mathcal U ^+ = \emptyset$. Set $\amb _f := \amb \setminus \left( \overline{\mathcal U^- \cup \mathcal U^+}\right)$ and consider the open sets 
$$\Lambda ^\pm := \set{ (p,  t ) \in \Sigma \times \r ^\pm \, : \, \, f(p) < \pm t < 2 f(p) }$$and
$$\tilde \Lambda ^\pm := \set{ (p, t ) \in \Sigma \times \r \, : \, \, 0 \leq \pm t < f(p) }$$

\begin{figure}[htbp]
\begin{center}
\includegraphics[scale=0.2]{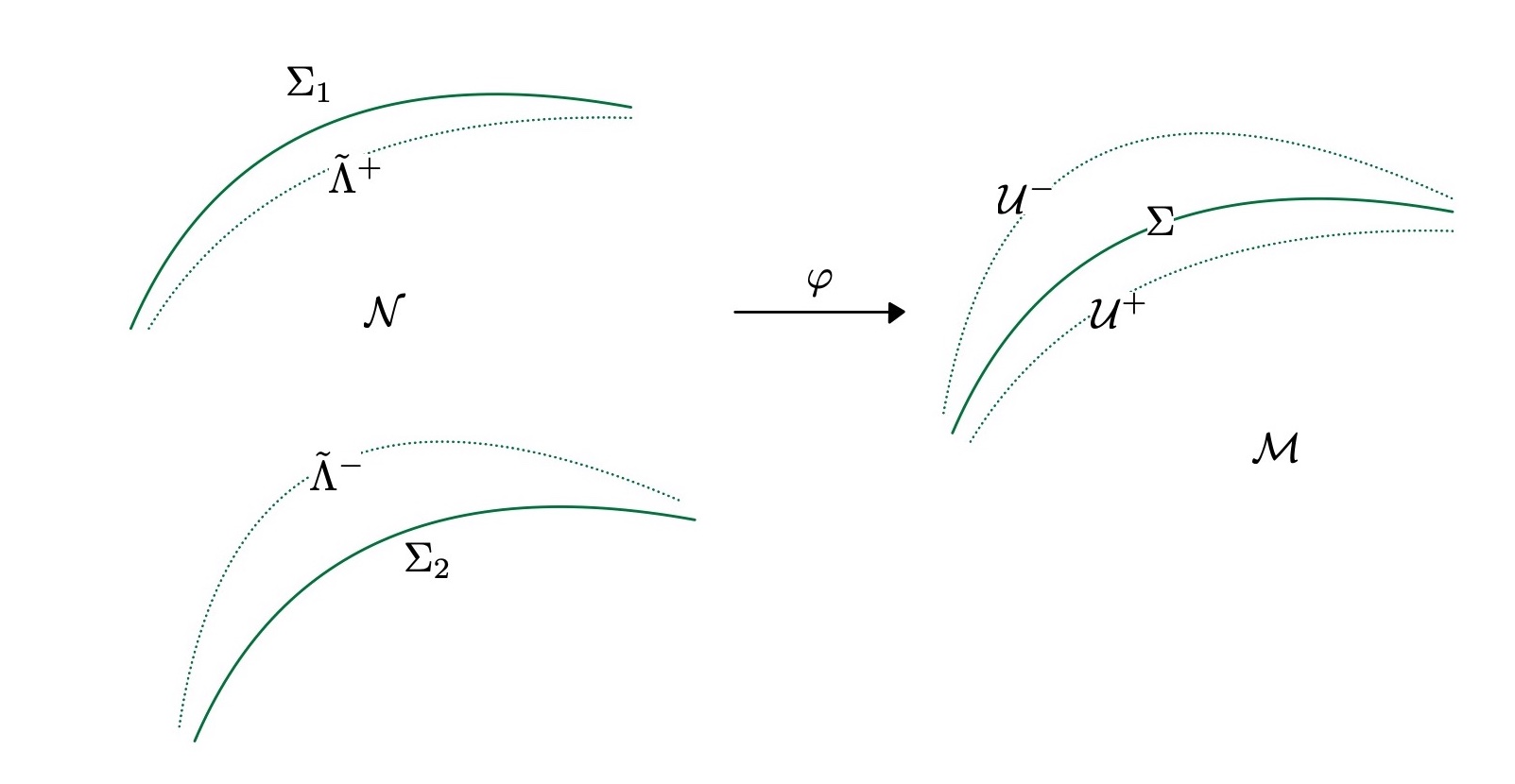}
\caption{$\varphi : \anb \setminus (\Sigma _1 \cup \Sigma _2) \to \amb \setminus \Sigma$ is a diffeomorphism}
\label{Figure3}
\end{center}
\end{figure}

Define $\anb$ as the quotient, endowed with the quotient metric, of the disjoint union of $\amb _f$, $\tilde \Lambda ^+$ and $\tilde \Lambda ^-$ by the identification $(p,t) \equiv \Psi(p,t) \in \amb _f$ for $(p,t) \in \Lambda ^\pm $. The map $\varphi$ is the identity on $\amb _f$ and $\Psi$ on $\tilde \Lambda^+ $ and $\tilde \Lambda ^- $. $\Sigma _1 $ and $\Sigma _2$ are the two copies of $\Sigma \times \set{0}$ (cf. Figure \ref{Figure3}).
\end{proof}

Proposition \ref{NonSeparating} is a way to construct a new manifold with mean convex boundary. This is a good domain to construct (homologically) area-minimizing hypersurfaces. Thanks to the structure of stable hypersurfaces in four dimensional manifolds of non-negative sectional curvature and scalar curvature bounded below by a positive constant that implies parabolicity (cf. \cite{OChoCLiDStr22} and \cite[Theorem 1.2]{OMunJWan22}), we can prove:

\begin{theorem}\label{ThNonSeparating}
Let $(\amb^n  ,g)$, $n=4$, be a complete orientable manifold of bounded geometry, non-negative sectional curvatures, $\Kb _{ij} \geq 0$, and scalar curvature bounded away from zero, $\Sb \geq c >0$. Let $\Sigma \subset \amb$ be a properly quasi-embedded, two-sided, minimal hypersurface. Assume $\Sigma$ does not separate, then:
\begin{itemize}
\item[{\rm (i)}] $\Sigma$ is embedded.
\item[{\rm (ii)}] $\amb ^4$ is a mapping torus; i.e., there exists ${\bf d} >0 $ such that $\amb ^4$ is isometric to $\Sigma \times [0, {\bf d}]$, with the product metric, where $\Sigma \times \set{0}$ is identified with $\Sigma \times \set{{\bf d}}$ by an isometry.
\item[{\rm (iii)}] If $\tilde \Sigma $ is an orientable, properly immersed minimal hypersurface in $\amb \setminus \Sigma$, then $\tilde \Sigma = \Sigma \times \set{ t} $ for some $t \in [0, {\bf d}]$.  
\end{itemize}
\end{theorem}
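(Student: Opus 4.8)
\textbf{Proof proposal for Theorem \ref{ThNonSeparating}.}

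The plan is to follow the strategy sketched in the introduction for $\t^2\times\r^2$, now in the abstract setting. First I would apply Proposition \ref{NonSeparating} to the non-separating hypersurface $\Sigma$: since $\Sigma$ is properly quasi-embedded it is embedded outside a compact set, and I would first argue that $\Sigma$ is in fact everywhere embedded. This should follow by running the tubular-neighborhood / cut-and-paste argument of Theorem \ref{Mod2Proper} and Lemma \ref{LemF}: a self-intersection of $\Sigma$, being minimal, would either be a tangential contact (ruled out by the maximum principle) or a transverse double point inside the compact set $\mathcal K$; a transverse double point can be removed by the usual area-reducing surgery once we know $\Sigma$ bounds, after passing to the area-minimizing competitor $\mathcal F$ below — so really this conclusion is bootstrapped from the splitting. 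I would therefore set up the cut-open manifold $\anb$ with two boundary components $\Sigma_1,\Sigma_2$, each diffeomorphic to $\Sigma$, with $\partial\anb$ a good barrier (mean convex, by the First Variation Formula \eqref{1VH} and $\Ricb\ge 0$ on the equidistants used to build the collar). Take the exhaustion $C_i$ of $\anb$ by domains with good-barrier boundary.

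Next I would invoke Lemma \ref{LemF} to produce in $\anb$ a complete, properly embedded, orientable, stable minimal hypersurface $\mathcal F$ that is area-minimizing $\mathrm{mod}(2)$ and homologous to $\mathcal S_1$; by construction $\mathcal F\cap\gamma\neq\emptyset$ where $\gamma$ is the geodesic joining $\mathcal S_1$ to $\mathcal S_2$, so $\mathcal F$ genuinely separates the two boundary components of $\anb$. Since $(\amb,g)$ has $\Kb_{ij}\ge 0$ and $\Sb\ge c>0$, the same holds for $\anb$ away from the compact modification region, and by \cite{OChoCLiDStr22} (the Chodosh–Li–Stryker theorem quoted in the introduction, together with $\mathrm{mod}(2)$-minimality) $\mathcal F$ is \emph{totally geodesic}, parabolic, and $\Ricb(N_{\mathcal F},N_{\mathcal F})\equiv 0$ along $\mathcal F$; Proposition \ref{PropParabolicStable} gives the same conclusion directly from parabolicity plus $\Ricb\ge 0$. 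Now I would use Theorem \ref{ThETube}: $\mathcal F$ is properly embedded, parabolic, with bounded second fundamental form (Claims A,B of Lemma \ref{LemF}), so it has an embedded, well-oriented $\epsilon$-tube of bounded extrinsic geometry, and Theorem $\tilde{\mathrm A}$ applies to $\mathcal F$. Comparing $\mathcal F$ with its own equidistants $\mathcal F(t)$ — which are mean convex pointing away from $\mathcal F$ by well-orientedness, hence can be pushed until they first touch — a standard sweep-out argument forces every equidistant between $\mathcal F$ and each copy $\mathcal S_i$ to be totally geodesic, and then Lemma \ref{Foliation} (the product-metric lemma) gives that the slab of $\anb$ between $\Sigma_1$ and $\Sigma_2$ is isometric to $\Sigma\times[0,\mathbf d]$ with the product metric, $\mathbf d=\mathrm{dist}_{\anb}(\Sigma_1,\Sigma_2)$. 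Gluing back the two boundary copies of $\Sigma$ via $\varphi$ shows $\amb$ is the mapping torus $\Sigma\times[0,\mathbf d]/\!\sim$, giving (ii); and since the product slab is embedded in $\amb$ and $\Sigma$ is a leaf, $\Sigma$ is embedded, giving (i). For (iii): an orientable properly immersed minimal $\tilde\Sigma\subset\amb\setminus\Sigma$ lifts to $\anb\setminus(\Sigma_1\cup\Sigma_2)=\Sigma\times(0,\mathbf d)$; applying Theorem $\tilde{\mathrm A}$ with $\Sigma$ (now known parabolic, embedded, with the flat $\epsilon$-tube) as the barrier, $\tilde\Sigma$ must be an equidistant, i.e. a slice $\Sigma\times\{t\}$.

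The main obstacle I expect is the sweep-out step that upgrades ``$\mathcal F$ totally geodesic'' to ``the whole slab is a metric product''. Knowing one area-minimizing leaf $\mathcal F$ is totally geodesic with vanishing normal Ricci is not by itself enough; one needs to propagate this to a foliation. The clean way is: start the foliation at $\mathcal F$, note that the normal variation vector field $f N$ with $f\equiv 1$ has $Lf=(|A|^2+\Ricb(N,N))f=0$ on $\mathcal F$ (totally geodesic, $\Ricb(N,N)=0$), so the unit-speed normal flow moves $\mathcal F$ to first order through minimal hypersurfaces; then use the stability/parabolicity to show that if the nearby equidistant $\mathcal F(t)$ were strictly mean convex somewhere the Maximum Principle at Infinity (Theorem $\tilde{\mathrm A}$) would be violated by the pair $(\mathcal F, \mathcal F(t))$ unless $\mathcal F(t)$ is itself an equidistant — which it is tautologically — so one instead argues that $\mathcal F(t)$ stays totally geodesic for all $t$ in the maximal interval, the interval is open and closed, and reaches $\Sigma_1,\Sigma_2$. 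Once every leaf is totally geodesic, the Riccati equation for the shape operator of the equidistants, $A' = -A^2 - R_N$, together with $A\equiv 0$, forces the normal Jacobi fields to be parallel, hence $F^*g=\sigma+dt^2$ — this is exactly the content invoked as Lemma \ref{Foliation}. A secondary subtlety is ensuring the compact modification used to make $\anb$ (and the Song-type trick of Theorem $\tilde{\mathrm A}$) does not pollute the curvature hypotheses $\Kb_{ij}\ge 0$, $\Sb\ge c$ where they are actually needed; this is handled by noting $\mathcal F$ and the relevant leaves can be taken to escape any fixed compact set, so the Chodosh–Li–Stryker conclusion applies on the unmodified region, which suffices.
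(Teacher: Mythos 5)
Your proposal correctly identifies the main tools (cutting open, Lemma \ref{LemF}, Chodosh--Li--Stryker, Theorem \ref{ThETube}, Theorem $\tilde{\mathrm A}$, Lemma \ref{Foliation}) and your overall picture of the splitting is right, but there are two real gaps, and the first one is the crux of the matter.

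\textbf{First gap: the embeddedness bootstrap is circular, and your proposed fix does not work.} You write that you would apply Proposition \ref{NonSeparating} to $\Sigma$, but that proposition requires $\Sigma$ to be properly \emph{embedded} — you cannot even define the cut-open manifold $\anb$ until you have settled (i). You acknowledge this and suggest removing transverse self-intersections by area-reducing surgery, but this surgery argument only works when the hypersurface is already assumed to be area-minimizing; the hypothesis here is just that $\Sigma$ is minimal (critical, not minimizing), so a transverse double point of $\Sigma$ cannot be cut away to contradict minimality. The paper resolves this in a genuinely different way: it isolates the embedded-and-stable case as Case 1, and then in Case 2 it does \emph{not} touch $\Sigma$ or try to cut along it. Instead, using the curvature hypothesis $\Kb_{ij}\geq 0$ and the Cheeger--Gromoll exhaustion of $\amb$ by compact domains with mean-convex boundary, it minimizes area $\mathrm{mod}(2)$ in $\amb$ itself (via Theorem \ref{ThCurrent} and the limiting argument of Lemma \ref{LemF}) in the homology class determined by $\Sigma\cap C_i$, producing an auxiliary properly embedded, stable, area-minimizing hypersurface $\Sigma'$ which still has nonzero intersection number with the transverse loop $\gamma$ and so is non-separating. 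Case 1 is then applied to $\Sigma'$, and afterwards item (iii), applied to the slab over $\Sigma'$, forces the \emph{original} $\Sigma$ to be a slice $\Sigma'\times\{t\}$, giving (i) as a \emph{consequence}. This detour through an auxiliary embedded minimizer is exactly the idea your proposal is missing.

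\textbf{Second gap: the sweep-out from one totally geodesic leaf to a metric product.} You want to propagate total geodesicity from $\mathcal F$ across the slab, and the justification you offer (first-order minimality of equidistants, plus an appeal to Theorem $\tilde{\mathrm A}$ applied to the pair $(\mathcal F,\mathcal F(t))$) does not close: the equidistant $\mathcal F(t)$ need not be minimal, so Theorem $\tilde{\mathrm A}$ does not apply to it, and ``the interval is open and closed'' is asserted, not proved. What actually makes the argument go is the presence of a \emph{second} minimal hypersurface at a fixed distance: once two disjoint minimal hypersurfaces are a fixed normal distance apart and the ambient Ricci is $\geq 0$, the monotonicity $H'(t)\geq 0$ from \eqref{1VH} squeezes $H(t)\equiv 0$ on the whole interval, which is precisely Lemma \ref{Foliation}. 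The paper supplies the missing second barrier by iterating: it constructs a chain of area-minimizers $\mathcal F_1,\mathcal F_2,\ldots$ in successively smaller slabs (each time removing the $\epsilon$-tubes already accounted for, so each step shortens the separating geodesic $\gamma$ by at least $\epsilon$); after finitely many steps some tube $\mathrm{Tub}_\epsilon(\mathcal F_k)$ must meet the tube of $\Sigma_1$, and then Theorem $\tilde{\mathrm A}$ plus Lemma \ref{Foliation} splits that sub-slab, after which the process repeats toward $\Sigma_2$. Your $\Omega_1\cap\Omega_2$ dichotomy is implicitly the first step of this iteration (the paper's Claim A), but the finite-termination mechanism via the $\epsilon$-quantified distance decrease is the missing concrete ingredient in your sweep-out.
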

\begin{proof}
We will first prove the result assuming $\Sigma$ is embedded and stable. 

\begin{quote}
{\bf Case 1:} {\it If $\Sigma $ is embedded and stable, then item ${\rm (ii)}$ and ${\rm (iii)}$ hold.}
\end{quote}
\begin{proof}[Proof of Case 1]
In this case, $\Sigma $ is totally geodesic and parabolic. Hence, Theorem \ref{ThETube} implies that there exists $\epsilon >0$ so that ${\rm Tub}_\epsilon (\Sigma)$ is embedded, well-oriented and of bounded extrinsic geometry. 

Since $\Sigma$ is non-separating, by Proposition \ref{NonSeparating}, there is a Riemannian manifold $ \anb $ with two boundary components $\Sigma _1$ and $\Sigma_2$ and a locally invertible smooth map $\varphi : \anb \to \amb $ such that $\varphi : \anb \setminus (\Sigma _1 \cup \Sigma _2) \to \amb \setminus \Sigma$ is a diffeomorphism and $\varphi : \Sigma _k \to \Sigma$ is a diffeomorphism for $k=1,2$. Since $\Sigma$ has an embedded $\epsilon -$tube, in the construction of $\varphi $, given in Proposition \ref{NonSeparating}, we can take $f \equiv \epsilon $. Consider the open sets 
$$ \Omega _k  : = \set{ p \in  \anb \, : \quad d_{\anb} (p , \Sigma _k) < \epsilon  } \text{ for } k=1,2.$$

Fix $q _1 \in \Sigma _1$, ${\bf d}:= {\rm dist}_{\anb}( q _1,\Sigma _2)$ and let $ \gamma :[0, {\bf d}] \to  \anb $ be a minimizing geodesic such that $|\gamma| = {\bf d} = {\rm dist}_{ \anb}( q _1, q _2)$, $ \gamma (0):= q _2 \in \Sigma _2$, $ \gamma$ orthogonal to $\Sigma _2$ at $ q _2$ and $\gamma([0, {\bf d}]) \cap \left( \Sigma _1 \cup \Sigma _2 \right) = \set{q_1 , q_2}$. 

\begin{quote}
{\bf Claim A:} {\it If $\Omega _1 \cap \Omega _2 \neq \emptyset $, then $\amb$ is a mapping torus.}
\end{quote}
\begin{proof}[Proof of Claim A]
Assume $\Omega _1 \cap \Omega _2 \neq \emptyset $. Set $\mathcal S _k = \partial \Omega _k \setminus \Sigma _k$, $k=1,2$, and assume $\mathcal S _1\cap \Omega _2\neq \emptyset$.

If some connected component of $\mathcal S _1\cap \Omega _2$ is compact, the Maximum Principle applied to this component at the closest point to $\Sigma _2$ shows that $\mathcal S _1$ is an equidistant of $\Sigma _2$. Since the orientation of $\mathcal S _1 \cap \Omega _2$ that makes the mean curvature non-negative points towards $\Sigma _2$ and the orientation of the equidistant of $\Sigma _2$ that makes the mean curvature non-negative points away from $\Sigma _2$, by \eqref{1VH}, the only possibility is that $\mathcal S _1\cap \Omega _2$ is minimal. Hence, $\widetilde{\mathcal S} := \mathcal S _1$ is minimal and equidistant of $\Sigma _1$ and $\Sigma _2$.


If all connected components of $\mathcal S _1 \cap \Omega _2$ are non-compact, reasoning as in Theorem \ref{MPI}, one shows there is a area-minimizing ${\rm mod}(2)$ hypersurface $\mathcal S '$ in $\Omega _2 \setminus \Sigma _2$ such that $\partial \mathcal S ' \subset \Sigma _2 (\epsilon ')$, $\epsilon ' < \epsilon /2$. This gives that the original $\widetilde{\mathcal S} := \mathcal S _1 \cap \Omega _2$ is an equidistant of $\Sigma _2$. 

Thus, in any case, there exists a minimal hypersurface $\widetilde{\mathcal S} \subset \Omega _1 \cap \Omega _2$ that is equidistant of $\Sigma _1$ and $\Sigma _2$. Hence, any point $p \in \anb$ belongs either to $\widetilde{\mathcal S}$ or one of the the regions between $\widetilde{\mathcal S} $ and $\Sigma _k$ in $\Omega _k$ ($k=1,2$); that is, $\anb = \Omega _1 \cup \Omega _2$. Lemma \ref{Foliation} then implies that $\anb$ is isometric to the Riemannian product $\Sigma _1 \times [0, {\bf d}]$. Claim A is proved.
\end{proof}

Assume now that $\Omega _1 \cap \Omega _2 = \emptyset$. Consider the manifold with boundary $ \widetilde \anb = \anb \setminus (\Omega _1 \cup \Omega _2)$, whose boundary components are denoted by $\mathcal S _k = \partial \Omega _k \setminus \Sigma _k$, $k=1,2$. Let $\tilde \gamma \subset \tilde \anb$ be a geodesic segment of $\gamma$ joining a point $p_1 \in \mathcal S_1$ to a point $p_2 \in \mathcal S _2$; moreover, we know $|\tilde \gamma| \leq  {\bf d} - \epsilon $. By Lemma \ref{LemF}, there is a two-sided properly embedded area-minimizing ${\rm mod}(2)$ hypersurface $\mathcal F _1\subset \tilde  \anb$, homologous to $\mathcal S _1$ and, by  \cite{OChoCLiDStr22} and \cite[Theorem 1.2]{OMunJWan22}, $\mathcal F _1$ is parabolic. Denote by $\anb _1 $ the region of $ \anb$ between $\Sigma _1 $ and $\mathcal F _1$.

If $\mathcal F _1$ touches $\mathcal S _1$ at a point $z$ at one side, then $\mathcal F _1=\mathcal S _1$ by the Maximum Principle; the mean curvature vector of $\mathcal S_1$ at $z $ points into $\tilde \anb$. By the First Variation Formula for the Mean Curvature \eqref{1VH}, the region $\mathcal N _1 \subset \anb$ is foliated by totally geodesic hypersurfaces and Lemma \ref{Foliation} then says the region $\overline{\mathcal N _1}$ is isometric to the product manifold $\Sigma _1 \times [0, \epsilon ]$. 

Recall that $\mathcal S _1 = \Sigma _1(\epsilon)$ is the equidistant of $\Sigma _1$ at distance $\epsilon$. Theorem \ref{ThETube} implies that $\mathcal F _1$ has an embedded $\epsilon-$tube, observe that ${\rm Tub}_\epsilon (\mathcal F_1) \subset \anb$. Let $\mathcal F _1(\epsilon)$ denote the equidistant of $\mathcal F _1$ at distance $\epsilon$ that is in $\anb _1$. If $\mathcal F _1(\epsilon)$ intersects $\Sigma _1 (\epsilon)$ at a point $z$ then first observe that $\mathcal F _1 (\epsilon)$ cannot touch $\Sigma _1 (\epsilon)$ at $z$ (locally at one side), since their mean curvature vectors would point in opposite directions at $z$ so $\mathcal F_1 (\epsilon) = \Sigma _1 (\epsilon)$. Since $\Kb_{ij} \geq 0$ there exists an exhaustion $\set{C_i}_{i \in \mathbb N} \subset \amb$ with mean convex boundary by the work of Cheeger-Gromoll \cite{JCheDGro72}. Then, by Lemma \ref{Foliation}, $\anb _1$ is a Riemannian product bounded by the parallel hypersurfaces $\Sigma _1$ and $\mathcal F_1$; which is what we want to prove. 

So we can suppose $\mathcal F _1(\epsilon)$ traverses $\Sigma _1 (\epsilon)$ locally near $z$. But then an equidistant of $\mathcal F _1$ has a component in ${\rm Tub}_{\epsilon}(\Sigma _1)$. As we saw previously, this implies the component is also an equidistant of $\Sigma _1$, hence, again, $\anb _1$ splits as a Riemannian product bounded by the parallel hypersurfaces $\Sigma _1$ and $\mathcal F _1$. 

Now suppose ${\rm Tub}_{\epsilon}(\mathcal F _1) \cap \overline{\Omega }_1 = \emptyset$. Let $\tilde \anb_2$ be the closure of $\anb _1 \setminus \left( {\rm Tub}_{\epsilon} (\mathcal F_1) \cup {\rm Tub}_{\epsilon}(\Sigma _1)\right)$, $\tilde \anb _2$ has boundary $\mathcal F _1 (\epsilon) \cup \Sigma _1 (\epsilon)$. Let $\tilde \gamma$ be the geodesic segment of $\gamma$ in the interior of $\tilde \anb_2$, joining a point $z_1 \in \mathcal F _1 (\epsilon) $ to a point $x \in \Sigma _1 (\epsilon)$. 

Now we can construct a two-sided properly embedded stable minimal hypersurface $\mathcal F _2$ in $\tilde \anb_2$, $\mathcal F _2$ homologous to $\Sigma _1 (\epsilon)$, and $\mathcal F_2$ has non zero intersection number with $\tilde \gamma$. $\mathcal F _2$ is constructed in the same manner we constructed $\mathcal F _1$. The highest point of $\mathcal F _2 \cap \tilde \gamma$, denoted by $z_2$, is at least $\epsilon$ higher than $z_1$, the highest point of $\mathcal F_1 \cap \gamma$. Also, ${\rm dist}_{\anb} (\mathcal F _2 , \Sigma _1) \geq \epsilon$. 

If ${\rm Tub}_{\epsilon}(\mathcal F _2) \cap \Omega _1 \neq \emptyset$, we conclude as before that the domain bounded by $\Sigma _1$ and $\mathcal F _2$ splits. If the  intersection is empty, we construct a higher least area $\mathcal F _3$ in $\tilde \anb_3$, the closure of $\tilde \anb _2 \setminus \left( {\rm Tub}_{\epsilon} (\mathcal F_2) \cup {\rm Tub}_{\epsilon}(\Sigma _1)\right)$, and continue. After a finite number of steps, we must have ${\rm Tub}_{\epsilon}(\mathcal F _k) \cap \Omega _1 \neq \emptyset$. So, we can suppose $k=1$ and $\anb _1$ is a product manifold over $\Sigma_1$.

Then, consider the manifold $\anb _2 = \overline{\anb \setminus \anb _1}$ bounded by $\mathcal F _1 $ and $\Sigma _2$. They are both stable and parabolic so we can begin the proof again with $\Sigma _1$ replaced by $\mathcal F _1$ and $\Sigma _2$. The region $\anb _2$ differs from the region $\anb $ in the length of $\gamma$, which has been reduced by at least $\epsilon$. So the argument we did can be repeated a finite number of times to finally prove item ${\rm (ii)}$ when $\Sigma$ is stable and embedded. Item ${\rm (iii)}$ follows clearly from item ${\rm (ii)}$. This completes the proof of Step 1.
\end{proof}

Now, we consider the general case:

\begin{quote}
{\bf Step 2:} {\it If $\Sigma $ is quasi-embedded, then $\Sigma$ is embedded and stable.}
\end{quote}
\begin{proof}[Proof of Step 2]

We will first prove:

\begin{quote}
{\bf Claim B:} {\it There exists a properly embedded, orientable, stable minimal hypersurface $\Sigma ' \subset \amb$ that is non-separating.}
\end{quote}
\begin{proof}[Proof of Claim B]
Since $\Kb_{ij} \geq 0$, there exists an exhaustion $\set{C_i}_{i \in \mathbb N} \subset \amb$ with mean convex boundary by the work of Cheeger-Gromoll \cite{JCheDGro72}. Let $\mathcal K \subset \amb$ be the compact set given in Definition \ref{QuasiEmbedded} so that $\Sigma \setminus \mathcal K$ is embedded. Since $\Sigma $ is non-separating (cf. Definition \ref{DefNonSeparating}), for any given $p \in \Sigma \setminus \mathcal K$ there exists a simple closed curve $\gamma : \s ^1 \to \amb$ so that $\gamma \cap \Sigma = \set{p}$ transversally. We can assume that $\gamma \cup \mathcal K \subset C_1$. 

Fix $j \in \mathbb N$ and let $i > j$. Let $\Sigma _i  \subset \Sigma \cap C _i$ be a connected component so that the intersection number of $\gamma $ and $ \Sigma _i$ is odd. Set $\Gamma _i = \partial \Sigma _i$ and $\Sigma _i \subset \Sigma _l $ for all $ l > i$. Let $F_i$ be an embedded least-area hypersurface ${\rm mod}(2)$ relative to $(C_i , \Sigma _i , \Gamma _i)$ given by Theorem \ref{ThCurrent}. $F_i$ is in the compact mean convex region $C_i$ and $\gamma \cap F_i \neq \emptyset$. 

For $i>j$, let $F_{ij} $ be a component of $C_{j}\cap F _i$ such that $F_{ij} \cap \gamma \neq \emptyset$; this component exists since the intersection number of $\Sigma_i$ and $ \gamma$ is odd. Arguing as in Lemma \ref{LemF} and letting first $i$ and then $j$ go to infinity, we obtain a two-sided properly embedded area-minimizing ${\rm mod}(2)$ hypersurface $\Sigma '\subset \amb$, homologous to $\Sigma$, and intersects $ \gamma$. This proves Claim B.
\end{proof}

Now, we apply Step 1 to $\Sigma '$ and we obtain that $\Sigma = \Sigma ' \times \set{t}$, for some $t \in [0,{\bf d}]$. This proves Step 2.
\end{proof}

Finally, Step 1 and Step 2 prove Theorem \ref{ThNonSeparating}.
\end{proof}


As a consequence of the above result, we have a Frankel type property in these manifolds: 

\begin{theorem}\label{CorNonSeparating}
Let $(\amb^4  ,g)$ be a complete orientable manifold of bounded geometry, non-negative sectional curvatures, $\Kb _{ij} \geq 0$, and scalar curvature bounded away from zero, $\Sb \geq c >0$. Let $\Sigma _1,  \Sigma _2\subset \amb$ be two properly quasi-embedded, two-sided, minimal hypersurfaces. Then, either $\Sigma _1$ intersects $  \Sigma _2$ or they are parallel.
\end{theorem}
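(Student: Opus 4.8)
The plan is to assume $\Sigma_1\cap\Sigma_2=\emptyset$ (otherwise there is nothing to prove) and to reduce, as far as possible, to Theorem \ref{ThNonSeparating} and the argument of its Case 1. I would split into two cases according to whether one of the $\Sigma_i$ separates $\amb$.

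Suppose first that one of them, say $\Sigma_1$, does \emph{not} separate $\amb$. Then $\Sigma_1$ meets all the hypotheses of Theorem \ref{ThNonSeparating}, so $\Sigma_1$ is embedded, $\amb$ is isometric to a mapping torus $\Sigma_1\times[0,{\bf e}]$ with the product metric (the ends glued by an isometry), and every orientable properly immersed minimal hypersurface of $\amb\setminus\Sigma_1$ is a slice $\Sigma_1\times\{t\}$. Applying the last assertion to $\Sigma_2\subset\amb\setminus\Sigma_1$ gives $\Sigma_2=\Sigma_1\times\{t_0\}$ for some $t_0\in(0,{\bf e})$; in particular $\Sigma_2$ is embedded, the domain $\mathcal U=\Sigma_1\times(0,t_0)$ satisfies $\overline{\mathcal U}=\Sigma_1\times[0,t_0]$ (a Riemannian product) and $\partial\overline{\mathcal U}=\Sigma_1\cup\Sigma_2$, so $\Sigma_1$ and $\Sigma_2$ are parallel in the sense of Definition \ref{Parallel}. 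The same conclusion holds if instead $\Sigma_2$ does not separate.

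Suppose now that both $\Sigma_1$ and $\Sigma_2$ separate $\amb$. Let $\gamma$ be a minimizing geodesic from $\Sigma_1$ to $\Sigma_2$; it meets each $\Sigma_i$ orthogonally and its interior is disjoint from $\Sigma_1\cup\Sigma_2$. Let $\mathcal V$ be the component of $\amb\setminus(\Sigma_1\cup\Sigma_2)$ containing the interior of $\gamma$. A connectedness argument (each $\Sigma_i$ is connected and two-sided, so $\Sigma_i\cap\partial\mathcal V$ is open and closed in $\Sigma_i$) shows that $\anb:=\overline{\mathcal V}$ is a connected orientable manifold with boundary whose two boundary components are precisely $\Sigma_1$ and $\Sigma_2$; since both $\Sigma_i$ are minimal, $\partial\anb$ is a good barrier (zero mean curvature). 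Using $\Kb_{ij}\geq0$ together with the Cheeger--Gromoll convexity construction \cite{JCheDGro72}, as in Claim B of the proof of Theorem \ref{ThNonSeparating}, one exhausts $\anb$ by compact domains with good-barrier boundary and applies Lemma \ref{LemF} to produce a properly embedded, orientable, \emph{stable}, area-minimizing ${\rm mod}(2)$ hypersurface $\mathcal F\subset\anb$ homologous to $\Sigma_1$ and meeting $\gamma$. By \cite{OChoCLiDStr22} and Proposition \ref{PropParabolicStable}, $\mathcal F$ is totally geodesic and parabolic, so by Theorem \ref{ThETube} it carries an embedded, well-oriented $\epsilon$-tube of bounded extrinsic geometry. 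With this intermediate leaf $\mathcal F$ separating $\Sigma_1$ from $\Sigma_2$ inside $\anb$, I would then run verbatim the sandwiching iteration of Case 1 of Theorem \ref{ThNonSeparating}: if an equidistant of a leaf already produced enters the $\epsilon$-tube of $\Sigma_1$, of $\Sigma_2$, or of a previously produced leaf, then Theorem ${\rm \tilde A}$ forces it to be an equidistant and Lemma \ref{Foliation} splits the corresponding subregion as a Riemannian product; otherwise one constructs a further stable leaf strictly higher along $\gamma$, gaining at least $\epsilon$ of length each time, so that after finitely many steps $\anb$ is exhausted. One concludes that $\anb=\Sigma_1\times[0,{\bf d}]$ with the product metric, ${\bf d}={\rm dist}_\amb(\Sigma_1,\Sigma_2)$; hence $\Sigma_2=\Sigma_1\times\{{\bf d}\}$ is embedded and $\Sigma_1,\Sigma_2$ are parallel.

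I expect the main obstacle to be the second case, and within it the legitimacy of invoking Lemma \ref{LemF} in $\anb$: the boundary hypersurfaces $\Sigma_1,\Sigma_2$ are only assumed properly quasi-embedded and minimal, not of bounded second fundamental form, so $\anb$ need not be of bounded geometry up to its boundary, and near the compact self-intersection set $\mathcal K$ it need not even be a smooth manifold with boundary. This should be handled exactly as in the proof of Lemma \ref{LemF} itself: the Schoen--Simon--Yau estimates \cite{RSchLSim81,RSchLSimSYau75} used there are \emph{interior} estimates for area-minimizers and depend only on the bounded geometry of the ambient $\amb$ (through the small-ball area bounds), so the limit $\mathcal F$ is automatically of bounded curvature; near $\mathcal K$ one works in $\amb\setminus\mathcal K$, where the $\Sigma_i$ are genuinely embedded, and, if needed, first pushes the barriers $\Sigma_i$ slightly inward to mean-convex equidistants --- legitimate because $H'(0)=|A|^2+\Ricb(N,N)\geq0$ since $\Ricb\geq0$. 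The quasi-embeddedness then causes no further trouble: once $\anb$ is shown to be a product, its boundary slices, hence $\Sigma_1$ and $\Sigma_2$, are embedded, which is precisely what is meant by parallel in Definition \ref{Parallel}.
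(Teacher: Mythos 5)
Your reduction to the non-separating case is exactly the paper's, and your recognition that the heart of the matter is the ``both separate'' case is correct. However, your treatment of that case has a genuine gap: you try to run the sandwiching iteration of Case~1 of Theorem~\ref{ThNonSeparating} \emph{verbatim} with $\Sigma_1,\Sigma_2$ as barriers, but that iteration requires each boundary hypersurface to possess an embedded, well-oriented $\epsilon$-tube of bounded extrinsic geometry. Theorem~\ref{ThETube} delivers such a tube only for hypersurfaces that are parabolic and of bounded second fundamental form, which in this setting means \emph{stable} (so that \cite{OChoCLiDStr22} applies). The statement of Theorem~\ref{CorNonSeparating} does not assume $\Sigma_1,\Sigma_2$ are stable, and the paper accordingly splits the ``both separate and disjoint'' alternative into two sub-claims: Claim~A (both stable, where your iteration does apply and matches the paper) and Claim~B (at least one unstable, where the conclusion is that they must intersect). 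You never make this dichotomy, so the iteration you invoke is not justified when a $\Sigma_i$ is unstable.

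Your proposed patch --- pushing $\Sigma_i$ inward to mean-convex equidistants $\Sigma_i(s)$ --- does not close this gap. An unstable $\Sigma_i$ may have unbounded second fundamental form, in which case the equidistant at any fixed distance $s>0$ need not be globally well-defined (the normal exponential map may develop focal points arbitrarily close to $\Sigma_i$), and near the compact self-intersection set $\mathcal K$ the situation is worse still. The paper's Claim~B circumvents exactly this by using a \emph{compactly supported} perturbation: one perturbs the unstable $\Sigma_i$ only over a compact geodesic ball on which the first Jacobi eigenvalue is negative, using the first eigenfunction extended by zero. This produces a barrier with strictly mean convex regions on a compact piece, regardless of the global curvature of $\Sigma_i$, and guarantees that the Lemma~\ref{LemF} minimizer is distinct from the barrier. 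The contradiction is then derived by showing the iteration forces $\Sigma_i$ to be equidistant to a totally geodesic leaf, hence itself totally geodesic and stable, contradicting instability. Your argument is also circular at this point: you want the product structure (to conclude $\Sigma_i$ is embedded and stable), but establishing the product structure requires the $\epsilon$-tube of $\Sigma_i$, which in turn requires $\Sigma_i$ to be stable and parabolic. The eigenfunction perturbation is the device that breaks this circularity, and it is the missing ingredient in your proposal.
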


We shall explain first the meaning of parallel in a Riemannian manifold. Usually, {\it parallel} means that the distance between the two hypersurfaces is constant. For us, to be parallel is a stronger condition, we also ask that between the hypersurfaces the manifold has a product structure. 

\begin{definition}\label{Parallel}
Let $\amb ^n$ be a complete Riemannian manifold. We say that two disjoint properly embedded hypersurfaces $\Sigma _1, \Sigma _2\subset \amb $ are parallel if there exists a domain $\mathcal U \subset \amb \setminus \left( \Sigma _1\cup \Sigma_2 \right)$ such that $\partial \mathcal U = \Sigma _1\cup \Sigma _2$ and $\overline{\mathcal U}$ is a product manifold over $\Sigma _1 $ (or $\Sigma _2$); i.e, $\overline{\mathcal U} = \Sigma _1\times [0, {\bf d}]$ endowed with the product metric. Here ${\bf d} := {\rm dist}_{\amb}(\Sigma  _1, \Sigma _2) >0$ and $ \Sigma _2= \Sigma _1 \times \set{{\bf d}}$.
\end{definition}

\begin{proof}[Proof of Theorem \ref{CorNonSeparating}]
Assume that $\Sigma _1 \cap \Sigma _2 = \emptyset$ and we will show that they are parallel. We distinguish two cases:

\begin{quote}
{\bf Case 1:} {\it If either $\Sigma _1$ or $ \Sigma _2$ are non-separating, then $\Sigma _1$ and $  \Sigma _2$ are parallel.}
\end{quote}
\begin{proof}[Proof of Case 1]
Assume one of them is non-separating; say $\Sigma _1$. Then, by Theorem \ref{ThNonSeparating}, $\Sigma _1$ is embedded and $\amb ^4$ is a mapping torus over $\Sigma _1$. In particular, $\Sigma _1$ is stable and hence parabolic (cf. \cite{OChoCLiDStr22} and \cite[Theorem 1.2]{OMunJWan22}). In this case, since $\Sigma _1\cap \Sigma _2= \emptyset$, Theorem \ref{MPI} shows that $ \Sigma _2$ is a leaf in the Riemannian product $\Sigma _1 \times [0, {\bf d}]$. This proves Case 1.
\end{proof}

Next, we focus on the case that $\Sigma _1 $ and $\Sigma _2$ are separating.

\begin{quote}
{\bf Case 2:} {\it If $\Sigma _1$ and $ \Sigma _2$ are both separating, then $\Sigma _1$ and $  \Sigma _2$ are parallel.}
\end{quote}
\begin{proof}[Proof of Case 2]
Since $\Sigma _1\cap \Sigma _2 = \emptyset$, there exists a connected component $\mathcal N \subset \amb \setminus \left( \Sigma _1 \cup \Sigma _2\right)$ such that $\partial \mathcal N := \mathcal S _1 \cup \mathcal S_2 \subset \Sigma _1 \cup  \Sigma _2$, where $\mathcal S _1 \subset \Sigma _1$ and $\mathcal S _2 \subset \Sigma_2 $. $\anb $ is piece-wise mean convex, the smooth pieces intersecting at angles less than $\pi$, that is, the boundary $\partial \anb$ is a good barrier. Fix $q _1 \in \mathcal S _1$, ${\bf d}:= {\rm dist}_{\anb}( q _1,\mathcal S _2)$ and $ \gamma :[0, {\bf d}] \to  \anb $ a minimizing geodesic such that $|\gamma| = {\bf d} = {\rm dist}_{ \anb}( q _1, q _2)$. We can assume that $q_i \in \mathcal S_ i$ belongs to the regular part, $i=1,2$ (cf. Figure \ref{FigQuasiEmbedded}). 

\begin{figure}[htbp]
\begin{center}
\includegraphics[scale=0.25]{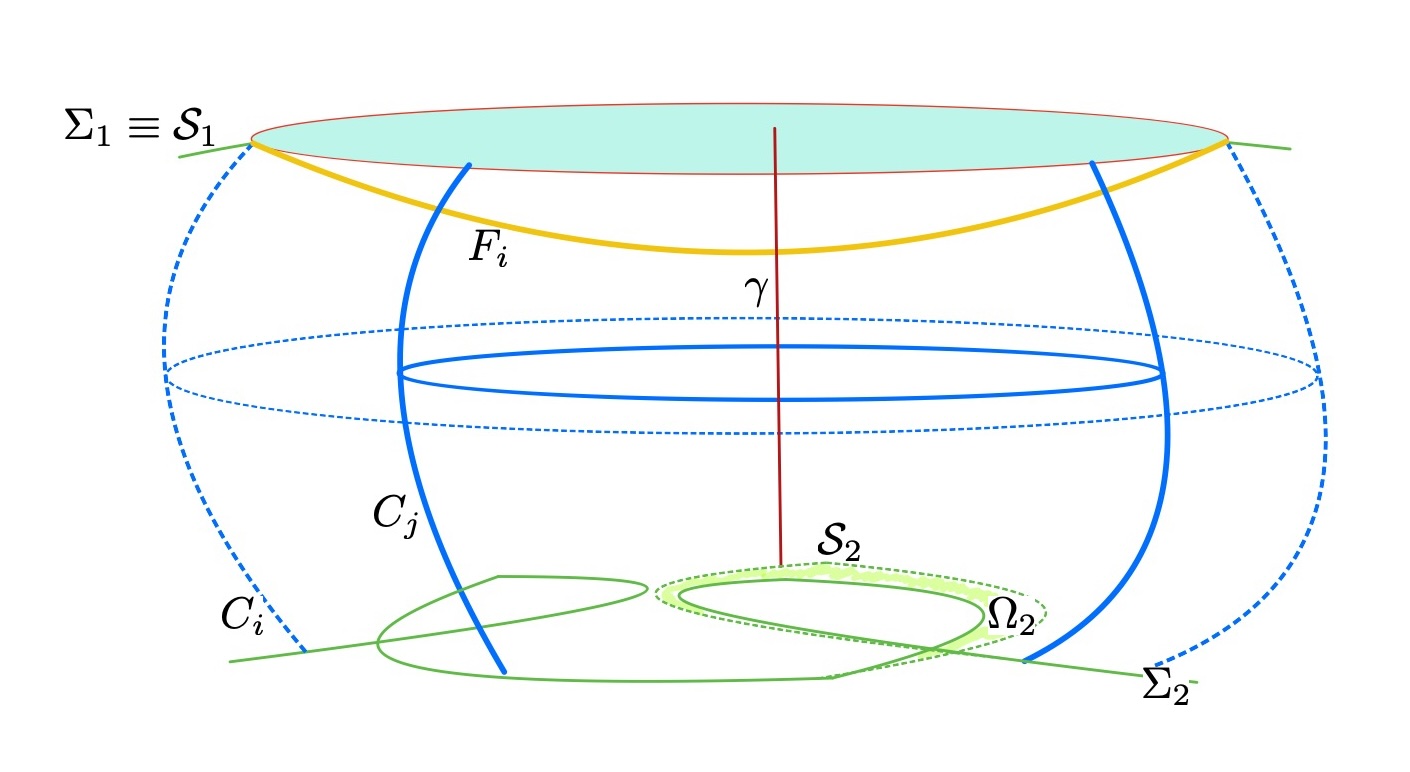}
\caption{Quasi-embedded case}
\label{FigQuasiEmbedded}
\end{center}
\end{figure}

Since $\Kb_{ij} \geq 0$, there exists an exhaustion $\set{C_i}_{i \in \mathbb N} \subset \amb$ with mean convex boundary by the work of Cheeger-Gromoll \cite{JCheDGro72}. By Lemma \ref{LemF}, we obtain a two-sided properly embedded area-minimizing ${\rm mod}(2)$ hypersurface $\mathcal F\subset  \anb$, homologous to $\mathcal S _1$, and intersects $ \gamma$. We distinguish the following possibilities: 
\begin{itemize}
\item {\bf Either $\mathcal F = \mathcal S _1$ or $\mathcal F = \mathcal S _2$:} We assume $\mathcal F = \mathcal S _1$. Then, $\Sigma _1= \mathcal S _1$ is embedded and stable, hence parabolic. Theorem \ref{ThETube} implies that there exists $\epsilon >0$ such that ${\rm Tub}_\epsilon (\Sigma _1)$ is embedded, well-oriented, and of bounded extrinsic geometry. If $\mathcal S_2 \cap {\rm Tub}_\epsilon (\Sigma _1) \neq \emptyset$, Theorem \ref{MPI} show that $\mathcal S _2 = \Sigma _2$ is an equidistant of $\Sigma _2$, in particular, $\Sigma _2$ is embedded. As in Claim A of Theorem \ref{ThNonSeparating}, $\anb$ is a Riemannian product $\Sigma _1 \times [0, {\bf d}]$, where $\Sigma _2 = \Sigma _1 \times \set{{\bf d}}$ and ${\bf d} :={\rm dist}_{\anb} (\Sigma _1 , \Sigma _2)$. If $\mathcal S_2 \cap {\rm Tub}_\epsilon (\Sigma _1) = \emptyset$, we would start the proof again replacing $\Sigma _1$ by $\Sigma _1 (\epsilon)$, the equidistant of $\Sigma_1$ at distance $\epsilon$ in $\anb $, and $\anb$ by $\anb \setminus \overline{{\rm Tub}_\epsilon (\Sigma _1)}$; in this new $\anb$, the distance between the boundary components of $\anb$ is $\epsilon $ smaller. 

\item {\bf $\mathcal F \cap (\mathcal S _1 \cup \mathcal S _2) = \emptyset$ and there exists $\epsilon >0$ such that ${\rm Tub}_\epsilon (\mathcal F)\cap (\mathcal S _1 \cup \mathcal S _2) \neq \emptyset$:} We can assume that $\mathcal S _1 \cap {\rm Tub}_\epsilon (\mathcal F) \neq \emptyset$. Then, as we showed above, either the equidistant $\mathcal F (\epsilon)$ of $\mathcal F$ at distance $\epsilon > 0$ in $\anb$ is tangential to $\mathcal S_1$ at a (regular) point $z$, or they are transverse; in any case and using Theorem \ref{MPI}, an equidistant of $\mathcal F$ is also an equidistant of $\Sigma _1 = \mathcal S _1$. Hence, as in Claim A of Theorem \ref{ThNonSeparating}, the region $\anb _1 \subset \anb$ between $\Sigma _1 $ and $\mathcal F$ is isometric to a product manifold over $\Sigma _1$. Now, we would start the proof again replacing $\anb$ by $\anb \setminus \overline{\anb _1}$; in this new $\anb$, the distance between the boundary components of $\anb$ is $\epsilon $ smaller. 

\item {\bf There exists $\epsilon >0$ such that ${\rm Tub}_\epsilon (\mathcal F)\cap (\mathcal S _1 \cup \mathcal S _2) = \emptyset$:} Denote now by $\mathcal N _1$ the domain in $\anb $ bounded by $\mathcal S _1$ and $\mathcal F$. Arguing as above; in a finite number of steps, say $m$, depending on $\epsilon >0$ given in Theorem \ref{ThETube} and the length $\abs{\gamma _1 }$, where $\gamma _1$ is the geodesic segment of $\gamma$ in the interior of $ \anb_1$, we can construct $\mathcal F_k$, $k \in \set{1, \ldots , m}$, pairwise disjoint, two-sided, properly embedded area-minimizing ${\rm mod}(2)$ hypersurfaces, each $\mathcal F_k$ with an embedded, well-oriented and of bounded extrinsic geometry $\epsilon -$tube, ${\rm Tub}_\epsilon ( \mathcal F_k) $; moreover, $\mathcal S_1 \cap {\rm Tub}_\epsilon ( \mathcal F_{m}) \neq \emptyset$. So, we can suppose $m=1$ and $\anb _1$ is a product manifold over $\Sigma_1$. Now, we would start the proof again replacing $\anb$ by $\anb \setminus \overline{\anb _1}$; in this new $\anb$, the distance between the boundary components of $\anb$ is $\epsilon $ smaller. 
\end{itemize}

Therefore, in a finite number of steps (depending on ${\bf d}$ and $\epsilon >0$ of Theorem \ref{ThETube}), we can show that there exists a properly embedded area-minimizing ${\rm mod}(2)$ hypersurface $\mathcal F \subset \anb $ such that $\anb \setminus \mathcal F = A \cup B$, where $B$ is a product manifold with boundaries $\mathcal S _1 = \Sigma _1 $ and $\mathcal F$; and $A$ is the region between $\mathcal S _2$ and $\mathcal F$ in $\anb $ and such that ${\rm Tub}_\epsilon (\mathcal F) \cap \mathcal S _2 \neq \emptyset$. Therefore, as before, $A$ is also a product manifold over $\mathcal S _2$ (or $\mathcal F$). Hence, $\overline{\anb }$ is isometric to the product manifold $\Sigma_1  \times [0, {\bf d}]$ and $ \Sigma_2  = \Sigma _1 \times \set{ {\bf d} }$. This proves Case 2.
\end{proof}

Finally, Cases 1 and 2 prove Theorem \ref{CorNonSeparating}.
\end{proof}

\section{Parabolic area-minimizing hypersurfaces}\label{Sect:Para}

Let $\amb ^n$, $3 \leq n \leq 7$, be a complete orientable manifold of bounded geometry and $\Sigma^{n-1} \subset \amb^n$ be a complete, two-sided, non-compact hypersurface; let $N$ be a unit normal along $\Sigma$. Assume that $\Ricb \geq 0$ and $\Sigma \subset \amb$ is area-minimizing ${\rm mod}(2)$ and parabolic. Then, $\Sigma$ is properly embedded, has an embedded, well-oriented, and of bounded extrinsic geometry $\bar \epsilon-$tube ${\rm Tub}_{\bar \epsilon} (\Sigma)$ (cf. Theorem \ref{Mod2Proper}), for some $\bar \epsilon >0$. Moreover, $\Sigma$ is totally geodesic and $\Ricb (N,N) \equiv 0$ along $\Sigma$ (cf. Proposition \ref{PropParabolicStable}).

Fix a point $p \in \Sigma$ and let $D_p(\epsilon) \subset \Sigma$ be the geodesic ball in $\Sigma$ centered at $p\in \Sigma $ of radius $\epsilon >0 $. We might choose $\epsilon _0 >0$ small enough so that $D_p (\epsilon) $ is topologically a ball and $\partial D_p (\epsilon)$ is smooth for all $0< \epsilon <  \epsilon _0 \leq \bar \epsilon$. Let $\set{K_i}_{i\in \np}$ be an exhaustion of $\Sigma$ by relatively compact sets so that $\overline{D_p (\epsilon_0)} \subset K_1$, $K_i \subset K_j$, $i< j$, and $\partial K_i$ is smooth for all $i \in \n$.

Consider the embedding $\Psi : \Sigma \times [-\epsilon , \epsilon ] \to \amb ^n$ given by the exponential map $\Psi(p,t) = {\rm exp}_{p} (t N(p))$. Shrinking $\epsilon _0 $ if necessary, we can assume that any normal geodesic starting at any point $q \in \overline{D_p (\epsilon_0)}$ does not develop focal points in $[-\epsilon _0 , \epsilon _0 ]$. For each $t \in [-\epsilon _0 , \epsilon _0]$ and $\epsilon \in (0, \epsilon _0)$, denote by $\tilde C(\epsilon ,t)$ the translation of $\partial D_p (\epsilon)$ by the exponential map at distance $t$; that is, $ \tilde C (\epsilon , t) = F\left( \partial D_p (\epsilon) ,t \right) $.

Moreover, set $\mathcal A_i (\epsilon) = K_i \setminus \overline{D_p (\epsilon)}$ and consider the harmonic function $f_i^{\epsilon,t} : \mathcal A_i (\epsilon) \to \r$ given by 
\begin{equation*}
\left\{ \begin{matrix}
\Delta f_i ^{\epsilon ,t} =0 & \text{ on } & \mathcal A_i (\epsilon), \\[1mm]
f_i ^{\epsilon ,t} = t & \text{ along } & \partial D_p (\epsilon), \\[1mm]
f_i ^{\epsilon ,t} = 0 & \text{ along } & \partial K_i ,\\
\end{matrix}\right.
\end{equation*}and denote by $S_i ^{\epsilon , t}$ the graph of $f_i^{\epsilon ,t}$ under the exponential map; that is, 
$$ S_i ^{\epsilon ,t} = \set{ {\rm exp} _q (f_i ^{\epsilon ,t} (q)N(q)) \, : \, \, q \in \mathcal A_i (\epsilon) }.$$ 

Then, it is clear that $\partial S_i ^{\epsilon , t} = \tilde C (\epsilon ,t) \cup \partial K_i$. Denote by $\tilde D (\epsilon ,t)$ an area-minimizing hypersurface with boundary $\tilde C(\epsilon , t)$. Now, we need to prove: 

\begin{lemma}\label{Douglas}
Fix $\epsilon \in (0, \epsilon _0)$ and $t \in [-\epsilon _0 , \epsilon _0]$. Then, there exists $i_0 \in \n$, depending on $\epsilon$, $t$ and the upper bound for the sectional curvature of $\amb$, so that
$$ \mathcal H ^{n-1} (S_i ^{\epsilon ,t}) < \mathcal H^{n-1} (K_i) + \mathcal H ^{n-1} (\tilde D (\epsilon ,t)) \text{ for all } i \geq i_0 .$$
\end{lemma}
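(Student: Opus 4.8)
The plan is to estimate each of the three areas appearing in the inequality separately, exploiting that $\Sigma$ is totally geodesic with $\Ricb(N,N)\equiv 0$ along $\Sigma$, and that $\epsilon$ and $t$ are \emph{fixed} while $i\to\infty$. First I would control $\mathcal H^{n-1}(S_i^{\epsilon,t})$. The graph $S_i^{\epsilon,t}$ sits over the annular region $\mathcal A_i(\epsilon)=K_i\setminus\overline{D_p(\epsilon)}$ via the exponential map, and its volume element is $\sqrt{\det(g_{S})}$, which differs from the volume element of $\mathcal A_i(\epsilon)$ (as a subset of $\Sigma$) by a factor that depends on the size of $|f_i^{\epsilon,t}|$ and $|\nabla f_i^{\epsilon,t}|$ together with the bounds on the second fundamental form of the equidistants and on the sectional curvatures of $\amb$. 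Since $\Sigma$ is totally geodesic and $\Ricb(N,N)\equiv0$ along $\Sigma$, the Fermi-coordinate metric $F^*g=g_t+dt^2$ satisfies $g_0=\sigma$, $\partial_t g_t|_{t=0}=0$ (totally geodesic) and $\partial_t^2 g_t|_{t=0}$ controlled by the curvature, so for $|t|\le\delta_0$ small one has $\sqrt{\det g_t}\le (1+C t^2)\sqrt{\det\sigma}$ pointwise with $C$ depending only on the sectional-curvature bound. Combining this with $|f_i^{\epsilon,t}|\le|t|$ (maximum principle) and the gradient estimate, I would obtain
\begin{equation}\label{Sgraph}
\mathcal H^{n-1}(S_i^{\epsilon,t})\le \mathcal H^{n-1}(\mathcal A_i(\epsilon))+C\!\!\int_{\mathcal A_i(\epsilon)}\!\!\big(t^2+|\nabla f_i^{\epsilon,t}|^2\big)\,dv_\sigma .
\end{equation}

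The key point is that the Dirichlet energy $\int_{\mathcal A_i(\epsilon)}|\nabla f_i^{\epsilon,t}|^2\,dv_\sigma$ stays \emph{bounded independently of $i$}: this is exactly where parabolicity of $\Sigma$ enters. The function $f_i^{\epsilon,t}$ is the harmonic function on $\mathcal A_i(\epsilon)$ equal to $t$ on $\partial D_p(\epsilon)$ and $0$ on $\partial K_i$; since $\Sigma$ is parabolic, the harmonic measure of $\partial K_i$ (equivalently the capacity of $D_p(\epsilon)$ relative to $K_i$) tends to zero, so the Dirichlet energies $E_i:=\int_{\mathcal A_i(\epsilon)}|\nabla f_i^{\epsilon,t}|^2$ form a \emph{decreasing} sequence converging to $t^2\cdot\mathrm{cap}_\Sigma(D_p(\epsilon))=0$. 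In particular $E_i\le E_{1}$ for all $i$, so the correction term in \eqref{Sgraph} is bounded by a constant $C'=C'(\epsilon,t)$ not depending on $i$; in fact it can be made as small as we like by taking $i$ large, at the cost of an additive $C t^2\,\mathcal H^{n-1}(\mathcal A_1(\epsilon))$ which I will absorb below. Hence $\mathcal H^{n-1}(S_i^{\epsilon,t})\le \mathcal H^{n-1}(K_i)-\mathcal H^{n-1}(D_p(\epsilon))+C''(\epsilon,t)$ with $C''$ independent of $i$ and, more precisely, $C''(\epsilon,t)\to Ct^2\mathcal H^{n-1}(\mathcal A_1(\epsilon))$-type bound while the genuinely $i$-dependent part vanishes.

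To finish I need to beat this against $\mathcal H^{n-1}(K_i)+\mathcal H^{n-1}(\tilde D(\epsilon,t))$, i.e. to show
\begin{equation*}
\mathcal H^{n-1}(D_p(\epsilon))-C''(\epsilon,t)<\mathcal H^{n-1}(\tilde D(\epsilon,t))
\end{equation*}
for $i$ large and $t,\epsilon$ appropriately small. The minimizer $\tilde D(\epsilon,t)$ spans the small translated sphere $\tilde C(\epsilon,t)=F(\partial D_p(\epsilon),t)$; as $t\to0$ it converges to the area-minimizer spanning $\partial D_p(\epsilon)$, which — because $\Sigma$ is totally geodesic and area-minimizing $\mathrm{mod}(2)$ in $\amb$, so that $D_p(\epsilon)$ itself is area-minimizing for its own boundary — has area exactly $\mathcal H^{n-1}(D_p(\epsilon))$. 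So $\mathcal H^{n-1}(\tilde D(\epsilon,t))=\mathcal H^{n-1}(D_p(\epsilon))+O(|t|)$, whereas the left side is $\mathcal H^{n-1}(D_p(\epsilon))-O(t^2)$; the linear gain $O(|t|)$ on the right dominates the quadratic loss $O(t^2)$ once $\delta_0$ is shrunk, \emph{provided} the $i$-dependent remainder in \eqref{Sgraph} has already been made smaller than, say, half the available gap, which holds for $i\ge i_0(\epsilon,t)$ by parabolicity. I would make the $O(|t|)$ lower bound on $\mathcal H^{n-1}(\tilde D(\epsilon,t))-\mathcal H^{n-1}(D_p(\epsilon))$ rigorous either via the monotonicity/comparison that $\tilde C(\epsilon,t)$ lies at distance $\ge$ (something like) $|t|-C\epsilon|t|$ from $\Sigma$ combined with a calibration-type lower area bound, or — more carefully — by noting that projecting $\tilde D(\epsilon,t)$ onto $\Sigma$ along Fermi coordinates is area-nonincreasing up to a factor $1+Ct^2$ and lands on a hypersurface in $\Sigma$ bounding $\partial D_p(\epsilon)$, hence of area $\ge \mathcal H^{n-1}(D_p(\epsilon))$, which already gives $\mathcal H^{n-1}(\tilde D(\epsilon,t))\ge(1+Ct^2)^{-1}\mathcal H^{n-1}(D_p(\epsilon))$; to extract a genuine linear gain one instead uses that $D_p(\epsilon)\cup S_i^{\epsilon,t}$-type competitors force strict inequality through the strict convexity coming from $\Ricb\ge0$ — this comparison estimate for $\mathcal H^{n-1}(\tilde D(\epsilon,t))$ is the main obstacle, and the parabolicity-driven smallness of the Dirichlet energy is the tool that makes the $i$-uniformity work.
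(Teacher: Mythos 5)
Your strategy begins as the paper's does --- pull $S_i^{\epsilon,t}$ back to $\mathcal A_i(\epsilon)$, estimate the Jacobian, and exploit parabolicity to control the Dirichlet energy of $f_i^{\epsilon,t}$ --- but it takes a wrong turn at the Jacobian bound, and that wrong turn creates a gap you cannot close. You estimate the tangential volume distortion by $(1+Ct^2)$, picking up an additive loss of order $t^2\,\mathcal H^{n-1}(\mathcal A_i(\epsilon))$ that grows with $i$, and you then have to offset this against a hypothetical $O(\abs{t})$ lower bound on $\mathcal H^{n-1}(\tilde D(\epsilon,t))-\mathcal H^{n-1}(D_p(\epsilon))$. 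You yourself flag this last comparison as ``the main obstacle'': your own Fermi-projection argument only gives $\mathcal H^{n-1}(\tilde D(\epsilon,t))\ge(1+Ct^2)^{-1}\mathcal H^{n-1}(D_p(\epsilon))$, i.e.\ a quadratic \emph{loss}, not a linear \emph{gain}, and no mechanism for a genuine linear gain is supplied. As written the argument does not close.

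The fix is that with $\Ricb\ge0$ and $\Sigma$ totally geodesic, the tangential Jacobian is bounded by $1$ with no quadratic correction: by Heintze--Karcher, if $J_1,\dots,J_{n-1}$ are the Jacobi fields along the normal geodesic with $J_j(0)=e_j$, then $\abs{J_1\wedge\cdots\wedge J_{n-1}}\le\abs{e_1\wedge\cdots\wedge e_{n-1}}=1$. Decomposing $\tilde J_j(1)=J_j(\bar r)+e_j(f)\,\gamma'(\bar r)$ and using that (by total geodesy of $\Sigma$) the $J_j$ stay orthogonal to $\gamma'$, one gets
\begin{equation*}
\mathcal H^{n-1}(S_i^{\epsilon,t})\le\int_{\mathcal A_i(\epsilon)}\sqrt{1+\frac{\abs{\nabla f_i^{\epsilon,t}}^2}{\abs{J_1}^2}}\,d\Sigma\le\mathcal H^{n-1}(\mathcal A_i(\epsilon))+\frac{1}{\lambda^2}\int_{\mathcal A_i(\epsilon)}\abs{\nabla f_i^{\epsilon,t}}^2\,d\Sigma,
\end{equation*}
where $\lambda>0$ is a lower bound for $\abs{J_1}$ depending only on the sectional-curvature bound. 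Since $\mathcal H^{n-1}(\mathcal A_i(\epsilon))=\mathcal H^{n-1}(K_i)-\mathcal H^{n-1}(D_p(\epsilon))$ and parabolicity forces $\int\abs{\nabla f_i^{\epsilon,t}}^2\to0$ as $i\to\infty$, one can pick $i_0$ so that the Dirichlet term is $<\lambda^2\mathcal H^{n-1}(D_p(\epsilon))$ for $i\ge i_0$. This already yields the strictly stronger inequality $\mathcal H^{n-1}(S_i^{\epsilon,t})<\mathcal H^{n-1}(K_i)$, so the term $\mathcal H^{n-1}(\tilde D(\epsilon,t))\ge0$ is pure slack and there is nothing to estimate on that side. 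In short: sharpen your Jacobian bound to the Heintze--Karcher inequality, and the entire second half of your argument (the comparison with $\tilde D(\epsilon,t)$) becomes unnecessary.
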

\begin{proof}
Consider the smooth map $\Psi_i : \mathcal A_i (\epsilon) \to \amb$ given by $\Psi_i (q) = {\rm exp} _q (f_i ^{\epsilon ,t} (q) N(q))$. Then, 
$$ \mathcal H ^{n-1} (S_i ^{\epsilon ,t}) = \int _{\mathcal A_i (\epsilon)} \abs{{\rm Jac }\, \Psi_i} d\Sigma ,$$where $d\Sigma$ is the volume element in $\Sigma$. Hence, we need to estimate the Jacobian $\abs{{\rm Jac} \, \Psi_i}$. Let $q \in \mathcal A_i (\epsilon)$ and $\set{e_j}_{j=1, \ldots , n-1} \in T_q \mathcal A_i (\epsilon )$ be an orthonormal basis so that 
$$e_1 = \frac{\nabla f_i ^{\epsilon,t}}{|\nabla f_i ^{\epsilon,t}|} \text{ and } g \left( e_j , \nabla f_i ^{\epsilon,t}\right) =0 \text{ for } j = 2, \ldots , n-1 ;$$we can do this choice for all $q \in \mathcal A_i (\epsilon) \setminus \mathcal C$, where $\mathcal H^{n-1} (\mathcal C) =0$. For $i=1, \ldots, n-1$, consider the geodesic $\alpha _i :(-\tau , \tau) \to \mathcal A_i (\epsilon )$ with initial conditions $\alpha _j (0) = q$ and $\alpha _j ' (0) = e_j$. Consider the variation by normal geodesics $g_j(s,r) := {\rm exp}_{\alpha _j (s)} (r f_i ^{\epsilon ,t} (\alpha _j (s))N(\alpha _j (s)))$, $(s,r) \in (-\tau , \tau ) \times [0,1]$, and set
$$ \tilde J_j (r) := \left. \frac{d}{ds}\right| _{s=0} g_j(s,r) , \quad r \in [0,1], $$the Jacobi field along the geodesic $r \to g_j (0,r)$ with initial condition $\tilde J _ j (0) = e_j $. Consider also the Jacobi field along the geodesic $r \to g^0_j (0,r)$ associated to the variation $g^0_j (s,r) := {\rm exp}_{\alpha _j (s)} (r N(\alpha _j (s)))$, that is, 
$$ J_j (r) := \left. \frac{d}{ds}\right| _{s=0} g^0_j(s,r), \quad r \in [0,1], $$with, also, initial condition $J_j (0) = e_j$. Observe that, since $\Sigma$ is totally geodesic and $\set{e_j}_{j =1, \ldots ,n-1}$ is orthonormal, then 
$$\set{\gamma _q ' (r), J_1 (r) , \ldots , J_{n-1}(r)}$$ is orthogonal at $q_r:=\gamma _q (r) = {\rm exp}_{q} (r N(q))$ for all $r >0$; in particular, it is a tangent basis along $\gamma _q$ (cf. \cite[page 179]{HKar87}). 

Set $\bar r := (f_i^{\epsilon, t}) (q)$ and $\bar q := \gamma _q (\bar r) \in S _i ^{\epsilon , t}$. Then, $\set{\tilde J _j (1) }_{j=1}^{n-1}$ is a basis of $T_{\bar q} S_i ^{\epsilon ,t}$ and it can be orthogonally decomposed as
\begin{equation}\label{eqa}
 \tilde J _j (1) = J_j (\bar r) + e_j (f_i^{\epsilon, t}) \gamma _q ' (\bar r) 
\end{equation}and hence, by \eqref{eqa}, we get
\begin{equation*}
\tilde J_1 (1) \wedge \ldots \wedge \tilde J_{n-1} (1)  =  J_1 (\bar r) \wedge \ldots \wedge J_{n-1} (\bar r) + e_1 (f_i^{\epsilon, t}) \gamma _q ' (\bar r)  \wedge J_2 (\bar r) \wedge \ldots \wedge J_{n-1} (\bar r),
\end{equation*}and now, since $J_1 (\bar r) \wedge J_2 (\bar r) \wedge \ldots \wedge J_{n-1} (\bar r) $ and $\gamma _q ' (\bar r)  \wedge J_2 (\bar r)\wedge \ldots \wedge J_{n-1} (\bar r)$ are orthogonal, we obtain 
\begin{equation}\label{eqb}
\abs{\tilde J_1 (1) \wedge \ldots \wedge \tilde J_{n-1} (1)}^2 = \abs{J_1 (\bar r) \wedge \ldots \wedge J_{n-1} (\bar r) }^2 \left( 1+ \frac{\abs{\nabla f_i ^{\epsilon,t}}^2 }{\abs{J_1(\bar r)}^2}\right).
\end{equation}

Using now that $\Ricb \geq 0$, it is well-known (see \cite{EHeiHKar78}) that 
\begin{equation}\label{eqc}
 \abs{J_1 (\bar r) \wedge \ldots \wedge J_{n-1} (\bar r) } \leq \abs{e_1 \wedge \ldots \wedge e_{n-1}} =1.
\end{equation}

Thus, since $\abs{{\rm Jac} \, \Psi_i} =  \abs{\tilde J_1 (1) \wedge \ldots \wedge \tilde J_{n-1} (1)}$ and using \eqref{eqb} and \eqref{eqc}, we obtain
\begin{equation}\label{Estimate1}
\mathcal H ^{n-1} (S_i^{\epsilon ,t})  \leq \int _{\mathcal A_i (\epsilon)} \sqrt{1+ \frac{\abs{\nabla f_i ^{\epsilon,t}}^2 }{\abs{J_1(\bar r)}^2}} d\Sigma .
\end{equation}

Finally, we need to estimate the right-hand side of \eqref{Estimate1}. First, since $|J_1(0)| =1$ and any normal geodesic starting at any point $q \in \overline{D_p (\epsilon_0)}$ does not develop focal points in $[-\epsilon _0 , \epsilon _0 ]$, then $\abs{J_1} \geq \lambda >0$, for some $\lambda$ depending only on the upper bound of the sectional curvature. Second, using the inequality $\sqrt{1+x} \leq 1+x$ for all $x \geq 0 $ applied to \eqref{Estimate1}, we obtain 
\begin{equation}\label{eqd}
\begin{split}
\mathcal H ^{n-1} (S_i^{\epsilon ,t}) & \leq \mathcal H ^{n-1} (\mathcal A_i(\epsilon )) + \frac{1}{\lambda ^2} \int _{\mathcal A_i (\epsilon)} \abs{\nabla f_i ^{\epsilon,t}}^2 \, d\Sigma  \\
 &= \mathcal H ^{n-1} (K_i) + \frac{1}{\lambda ^2} \int _{\mathcal A_i (\epsilon)} \abs{\nabla f_i ^{\epsilon,t}}^2 \, d\Sigma - \mathcal H ^{n-1} (D_p (\epsilon)) .
\end{split}
\end{equation}

Thus, since $\Sigma $ is parabolic, there exists $i_0 \in \n$ (cf. \cite[Theorem 10.1]{PLi04}) so that 
$$  \int _{\mathcal A_i (\epsilon)} \abs{\nabla f_i ^{\epsilon,t}}^2 \, d\Sigma  <  \lambda ^2 \mathcal H ^{n-1} (D_p (\epsilon)) \text{ for all } i \geq i_0,$$that applied to \eqref{eqd} implies $\mathcal H ^{n-1} (S_i^{\epsilon ,t}) < \mathcal H ^{n-1} (K_i) $ for $i \geq i_0$. The proof is completed.
\end{proof}

The above lemma will prove a local splitting theorem as in \cite{MAndLRod89} and we include here a sketch of the proof.    

\begin{remark}[Douglas Criterion for the Plateau Problem]
When we want find area-minimizing hypersurfaces (currents) in $\amb$ with two (disjoint) boundary components $\Gamma _1 \cup \Gamma _2$ that is connected; we must ensure that there exists a comparison hypersurface $S \subset \amb$, $\partial S = \Gamma _1 \cup \Gamma _2$, of area less than $\mathcal H^{n-1}(\Sigma _1) + \mathcal H ^{n-1} (\Sigma _2)$, where $\Sigma _i$ are ay least-area hypersurfaces bounding $\Gamma _i$, $i=1,2$. In such case, any area-minimizing hypersurface $\mathcal A \subset \amb $ with $\partial \mathcal A = \Gamma _1 \cup \Gamma _2$ must be connected. 
\end{remark}

\begin{theorem}\label{ThAreaMinimizing}
Let $\amb^n$, $3\leq n \leq 7$, be a complete orientable manifold of bounded geometry and $\Ricb \geq 0$. Let $\Sigma \subset \amb$ be complete, parabolic, oriented, area-minimizing ${\rm mod}(2)$ hypersurface. Then, the universal covering $\widetilde \amb$ of $\amb $ is isometric to $ \Sigma \times \r $ with the product metric. 
\end{theorem}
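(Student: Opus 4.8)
The plan is to run the Anderson--Rodr\'iguez scheme \cite{MAndLRod89} in the present higher-dimensional, ${\rm mod}(2)$ setting, using the Douglas-type estimate of Lemma \ref{Douglas} as the engine and the Schoen--Simon--Yau curvature bounds behind Lemma \ref{LemF} to take limits, and then to globalize the resulting local splitting by a maximality argument in the universal cover. First I would record the starting data: since $\Sigma$ is area-minimizing ${\rm mod}(2)$ and parabolic it is properly embedded and carries an embedded $\epsilon$-tube (Theorem \ref{Mod2Proper} and Theorem \ref{ThETube}), and it is totally geodesic with $\Ricb(N,N)\equiv 0$ along $\Sigma$ (Proposition \ref{PropParabolicStable}). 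Then I fix a base point $p\in\Sigma$, an exhaustion $\{K_i\}$ of $\Sigma$, the normal exponential map $F$, the geodesic balls $D_p(\epsilon)\subset\Sigma$, the translated boundaries $\tilde C(\epsilon,t)=F(\partial D_p(\epsilon),t)$, the graphs $S_i^{\epsilon,t}$ and the comparison disks $\tilde D(\epsilon,t)$, exactly as in the paragraph preceding Lemma \ref{Douglas}.

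\emph{Local splitting.} For each small $t$ and $\epsilon$ and each $i$, Theorem \ref{ThCurrent} produces a hypersurface $M_i^{\epsilon,t}$ spanning $\tilde C(\epsilon,t)\cup\partial K_i$ that is area-minimizing ${\rm mod}(2)$ in the homology class of $S_i^{\epsilon,t}$. A disconnected minimizer would have to be $\tilde D(\epsilon,t)\sqcup K_i$ (the disk $\tilde D(\epsilon,t)$ is area-minimizing for $\tilde C(\epsilon,t)$ by definition, and $K_i$ is area-minimizing for $\partial K_i$ because $\Sigma$ is area-minimizing ${\rm mod}(2)$); but Lemma \ref{Douglas} says that for $i\ge i_0$ this competitor has strictly larger area than the connected competitor $S_i^{\epsilon,t}$, so $M_i^{\epsilon,t}$ genuinely joins its two boundary components. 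The cut-and-paste sheet-removal argument and the Schoen--Simon--Yau estimates of Lemma \ref{LemF} (Claims A and B) give, uniformly in $i$, local area bounds, bounded second fundamental form on compact sets, and embedded tubular neighborhoods of a fixed size; the limiting procedure of Lemma \ref{LemF} then yields, as $i\to\infty$, a complete properly embedded orientable area-minimizing ${\rm mod}(2)$ hypersurface with boundary $\tilde C(\epsilon,t)$, and letting $\epsilon\to 0$ a complete, boundaryless, properly embedded, orientable, area-minimizing ${\rm mod}(2)$ hypersurface $\mathcal F_t$ passing through $\gamma_p(t):=F(p,t)$. The curvature estimates force each $\mathcal F_t$ to be a graph of small slope over $\Sigma$ in Fermi coordinates, hence quasi-isometric to $\Sigma$, hence parabolic, hence (Proposition \ref{PropParabolicStable}) totally geodesic. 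The maximum principle shows that for $t\neq s$ the leaves $\mathcal F_t,\mathcal F_s$ are disjoint --- two such graphs cannot coincide since they take different heights at $p$ --- so, together with $\mathcal F_0=\Sigma$, they depend continuously on $t$ and foliate a neighborhood $W$ of $\Sigma$ by totally geodesic leaves. By Lemma \ref{Foliation}, $W$ is isometric to $\Sigma\times(-\delta,\delta)$ with the product metric for some $\delta>0$.

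\emph{Globalization.} Pass to the universal cover $\widetilde\amb$ and lift the product region; the lifted leaf, still denoted $\Sigma$, is again complete, properly embedded, parabolic, totally geodesic and area-minimizing ${\rm mod}(2)$ in $\widetilde\amb$. Let $T\in(0,+\infty]$ be the supremum of all $T'>0$ for which the product region extends to an isometric embedding $\Sigma\times(-T',T')\hookrightarrow\widetilde\amb$ with totally geodesic leaves. If $T<+\infty$, the leaves $\Sigma\times\{t\}$ have geometry bounded uniformly in $t$ (totally geodesic in a space of bounded geometry), so as $t\to T^-$ they subconverge in $C^\infty_{\rm loc}$ to a complete, properly embedded, parabolic, totally geodesic, area-minimizing ${\rm mod}(2)$ leaf $\Sigma_T$; applying the local splitting above at $\Sigma_T$ produces a product neighborhood straddling $\Sigma_T$, extending the embedding past $T$ and contradicting maximality. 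Hence $T=+\infty$, and symmetrically on the other side, giving an isometric embedding $\Sigma\times\r\hookrightarrow\widetilde\amb$ with the product metric. Its image is open (equidimensional) and closed ($\Sigma\times\r$ is complete, hence closed in the complete $\widetilde\amb$), so by connectedness it is all of $\widetilde\amb$; thus $\widetilde\amb$ is isometric to $\Sigma\times\r$.

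\emph{Main obstacle.} The heart of the argument is the double limit ($i\to\infty$, then $\epsilon\to 0$) producing the complete leaves $\mathcal F_t$: one must propagate, through both passages, the uniform local area bounds and curvature estimates with multiplicity one (using ${\rm mod}(2)$-minimality and the sheet-removal argument of Lemma \ref{LemF} and Theorem \ref{Mod2Proper}), and one must check that the limit genuinely passes through $\gamma_p(t)$ rather than slipping to a lower point of $\gamma_p$. There is also a logical order to respect: parabolicity of $\mathcal F_t$ is only available once $\mathcal F_t$ is known to be quasi-isometric to $\Sigma$, which in turn needs the graph estimate --- and it is precisely here, via Lemma \ref{Douglas}, that the hypothesis that $\Sigma$ itself (not merely a stable minimal hypersurface) is parabolic is used.
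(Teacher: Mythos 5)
Your proposal follows the same Anderson--Rodr\'iguez scheme that the paper uses: record the consequences of parabolicity and ${\rm mod}(2)$-minimality (proper embedding, $\epsilon$-tube, totally geodesic, $\Ricb(N,N)\equiv 0$), invoke Lemma~\ref{Douglas} together with the Douglas criterion to produce connected compact ${\rm mod}(2)$-minimizers spanning $\tilde C(\epsilon,t)\cup\partial K_i$, pass to the double limit $i\to\infty$, $\epsilon\to 0$ using the area/curvature estimates of Section~3 to obtain complete totally geodesic leaves through $\gamma_p(t)$, organize them into a local foliation, conclude a local Riemannian product via Lemma~\ref{Foliation}, and iterate. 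The only substantive deviation is the globalization: where the paper cites \cite[p.\,465]{MAndLRod89} and says ``iterate indefinitely,'' you make the iteration explicit through a supremum/open-closed argument in the universal cover, which is cleaner, more self-contained, and transparently avoids the mapping-torus possibility in $\amb$. Two points deserve sharpening. First, after passing to $\widetilde\amb$ you declare the lifted hypersurface ``still denoted $\Sigma$'' to be parabolic; a connected component of the preimage of $\Sigma$ is a (possibly non-trivial) covering space of $\Sigma$, and parabolicity does not in general survive passage to covers --- this is implicit in the paper as well and is tied to how the conclusion $\widetilde\amb\cong\Sigma\times\r$ should be read, so it should at least be flagged. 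Second, your $\epsilon\to 0$ limit produces a leaf that is a priori only a minimal hypersurface in $\amb\setminus\{\gamma_p(t)\}$; the paper explicitly invokes the ${\rm mod}(2)$ regularity theory for $n\le 7$ to extend smoothly across $\gamma_p(t)$, and you should state this removable-singularity step rather than asserting outright that the limit is a boundaryless hypersurface through $\gamma_p(t)$. Also, the phrase ``a disconnected minimizer would have to be $\tilde D(\epsilon,t)\sqcup K_i$'' overstates what is literally true; the Douglas argument only needs that \emph{any} disconnected competitor has area at least $\mathcal H^{n-1}(\tilde D(\epsilon,t))+\mathcal H^{n-1}(K_i)$, which follows from $\tilde D(\epsilon,t)$ and $K_i$ being minimizers for their respective boundaries, and this is the inequality Lemma~\ref{Douglas} beats.
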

\begin{proof}
As explained in the beginning of this section, $\Sigma$ is properly embedded and has an embedded, well-oriented, and of bounded geometry  $\bar \epsilon-$tube, for some $\bar \epsilon >0$. Consider the hypersurfaces $S_i ^{\epsilon ,t}$ constructed above. From Lemma \ref{Douglas} and using the Douglas Criterion for the Plateau Problem, for each $i \geq i_0$, there exists a compact connected area-minimizing hypersurface $\tilde S^{\epsilon , t}_i$, smooth up to $n \leq 7$ (see \cite{LSim83,NWic14}), so that $\partial \tilde S ^{\epsilon , t}_ i = \tilde C (\epsilon ,t) \cup \partial K_i $. Moreover, by the Maximum Principle $\tilde S^{\epsilon ,t}_ i \subset \Psi (\Sigma \times (0, t))$. 

Since $\tilde S ^{\epsilon ,t} _i$ is area-minimizing we have area and curvature estimates on compact sets as we did in Section 3. Hence, we can follow the arguments in Section 3 to obtain a subsequence of $\set{\tilde S ^{\epsilon ,t} _i}_{i \in \np}$ that converges to a connected area-minimizing hypersurface $\tilde S ^{\epsilon , t}_\infty$ such that $\partial \tilde S ^{\epsilon , t}_\infty = \tilde C (\epsilon ,t)$. Next, letting $\epsilon \to 0$ and keeping $t \in (0, \epsilon _0]$ fixed,  one obtains an area-minimizing hypersurface $\tilde S ^t _\infty$ in $\amb \setminus \set{\gamma _q(t)}$ such that: 
\begin{itemize}
\item it is embedded;
\item may a priori have infinite topological type;
\item proper by construction;
\item smooth outside $\gamma _q(t)$ and across $\gamma _q(t)$ up to $n \leq 7$ (see \cite[Section 3.4]{NWic14}).
\end{itemize}

Then, following \cite[page 464]{MAndLRod89}, we can show that the family $\set{\tilde S^t _\infty} _{t \in [-\epsilon _0 , \epsilon _0]}$, shrinking $\epsilon _0$ if necessary, forms a $C^0$ foliation of a region of $\amb $, where $\tilde S _\infty ^0 = \Sigma$ is a leaf of this foliation. In particular, all the $\tilde S _\infty ^t $ are quasi-isometric to $\Sigma $ by Schoen-Simon-Yau's curvature estimates, shrinking $\epsilon _0 >0$ if necessary. Therefore, since all the $\tilde S _\infty ^t$ are also stable, $\set{\tilde S^t _\infty} _{t \in [-\epsilon _0 , \epsilon _0]}$ is a foliation by proper, totally geodesic hypersurfaces of a region of $\anb \subset \amb$. Hence, Lemma \ref{Foliation} implies that $\anb$ is isometric to the Riemannian product $\Sigma  \times [-\epsilon _0 , \epsilon _0]$. 

As in \cite[page 465]{MAndLRod89}, we can also show that each leaf of the foliation is area-miminizing (and parabolic); thus we can iterate this process indefinitely to finally prove the result.
\end{proof}

\section*{Appendix}

\begin{quote}
{\bf Theorem A:} {\it Let $(\s ^2 ,g)$ be endowed with a metric of positive curvature. Then, every complete embedded geodesic in $(\s ^2 ,g)$ is compact.}
\end{quote}
\begin{proof}[Proof of Theorem A]
Let $\gamma : \r \to (\s ^2 ,g)$ be a complete (parametrized by arc-length) embedded geodesic. If $\gamma$ is not compact then $\overline{\gamma} = \mathcal L$ is a geodesic lamination with a limit leaf $C$ which, in particular, is an embedded complete geodesic. Also, any $q \in C$ is a limit of points $p _n \in \gamma$ that diverge on $\gamma$.

Let $\vec{n}$ be a unit vector field to $C$. The curvature of $(\s ^2 , g)$ is bounded so there exists $\epsilon >0$ and $\delta >0$ such that for all $x = C (t_0)$, then the map $\psi : [t_0- \delta , t_0+ \delta ]\times [-\epsilon , \epsilon] \to (\s ^2 ,g)$ given by 
$$  \psi (t,s) := {\rm exp}_{C(t)}(s \vec{n}(C (t)))$$is a diffeomorphism and $\psi$ extends to an immersion $\Psi : \r \times [-\epsilon , \epsilon] \to (\s ^2 ,g)$.

For each $s \in [-\epsilon , \epsilon ]$, $C _s : \r \to (\s ^2 ,g)$, given by $C_s (t):= \Psi (t,s)$, is the equidistant at distance $s$ of $C$. Its geodesic curvature vector points away from $C$. 

Think locally of $C$ as horizontal and the $C _s$ as well, the vertical segments are the geodesics $\beta _t : [-\epsilon , \epsilon] \to (\s ^2 ,g)$, given by $\beta _t (s) = \Psi(t,s)$. Pull back the metric of $(\s ^2 ,g)$ to $\r \times [-\epsilon , \epsilon]$ by $\Psi$; call it $\tilde g$ (cf. Figure \ref{FigGeod1}).

\begin{figure}[htbp]
\begin{center}
\includegraphics[scale=0.2]{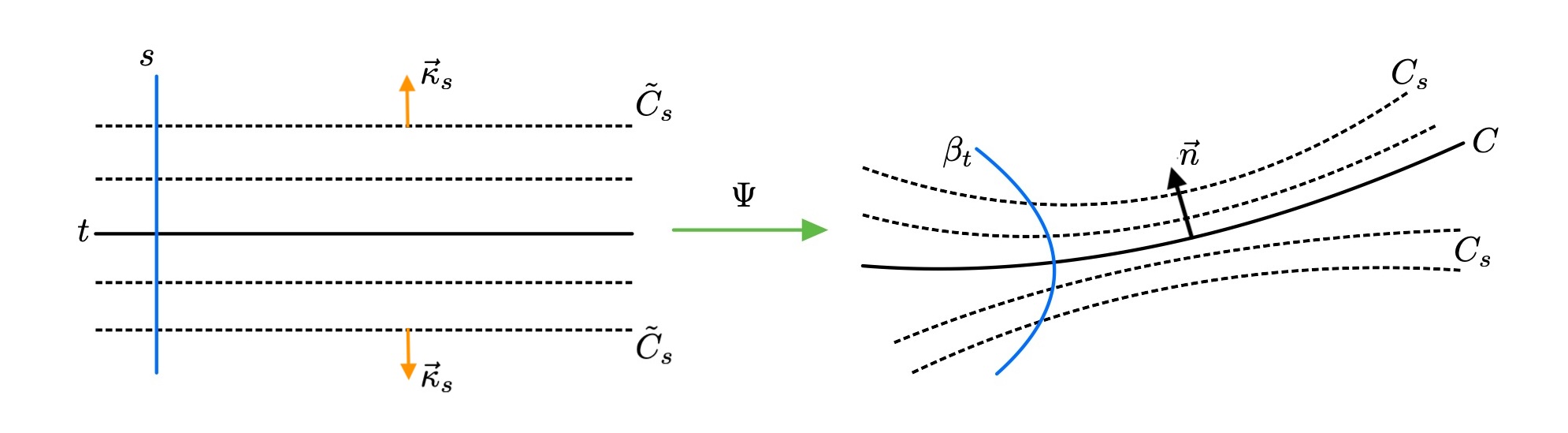}
\caption{The immersion $\Psi$}
\label{FigGeod1}
\end{center}
\end{figure} 

Since $\gamma$ is embedded, the tangent lines to $\gamma$ at $p_n$ converge to the tangent line to $C$ at $q$ as $n \to \infty$. The neighborhood 
$$W(q) := \set{\beta _t (s) \, : \, \, t \in [\bar t - \delta , \bar t + \delta] , \, s \in [0, \epsilon]},$$ lifts isometrically to $ [\bar t - \delta , \bar t + \delta] \times [0, \epsilon]$ by $\Psi ^{-1}$. Without lost of generality, we can assume $p _n \in W (q)$ and $p _n = \beta _{t_n} (s_n)$ for some $t_n \in  [\bar t - \delta , \bar t + \delta]$ and $s_n \in (0, \epsilon)$ such that $t_n \to \bar t$ and $s_n \to 0$ as $n \to \infty$.

Fix $n$ large enough and, for notational convenience, reparametrise $C$ such that $t_n =0 $ and $s_n = s(0) \in (0, \epsilon)$. Let $\tilde \gamma $ be the component of $\Psi ^{-1} (\gamma)$ containing $ \Psi ^{-1} (p_n) = (0, s(0))$. $\tilde \gamma$ is a geodesic  that intersects the vertical geodesic $\tilde \beta _{0} := \set{0} \times [0,\epsilon]$ at the point $( 0 ,s(0))$. Observe that $\tilde \gamma $ cannot be locally above the equidistant $\tilde C _{s(0)}$ at $(0 ,s(0))$ since the geodesic curvature of the equidistant $\tilde C _{s(0)} $ is positive, the curvature vector is pointing up.

Thus, we can choose one of the components of $\tilde \gamma  \setminus \set{(0 , s(0))}$ so that the tangent vector to the component at $(0, s(0))$ is going down. Orient this component, $\tilde \gamma  ^+(t) $, in the direction it is going down at $(0 ,s(0))$. Without loss of generality, we can assume $\tilde \gamma  ^+(t)$ goes down for $t \in (0 , \omega)$, $\omega >0$ small.

Then, $\tilde \gamma ^+(t)$ is a geodesic arc making an (exterior) angle $\theta  \in [\pi /2 , \pi )$ with the vertical geodesic $\tilde \beta _{0} := \set{0} \times [0,\epsilon]$, observe that $\theta < \pi$ since $\gamma$ is embedded. Using that the geodesic curvature of the equidistants $\tilde C _{s}$ is positive, the curvature vector is pointing up, it is easy to observe, if we write $\tilde \gamma  ^+(t) = (t, s(t))$, that $s(t)$ is monotonically decreasing in $[0 , \delta]$. In fact, the above argument remains true for all $0< t$ and hence, we consider the half-geodesic $\tilde \gamma  ^+ : [0 , + \infty) \to \r \times [0 , \epsilon] $. 

$\tilde \gamma ^+$ strictly descends for all time (since it cannot become tangent from above to an equidistant), so it is defined for all time and is asymptotic to some $\r \times \set{\bar s}$, $\bar s \in [ 0, \epsilon )$, as $t$ diverges. Thus, the exterior angle $\alpha (t)$ that $\tilde \gamma $ makes with the vertical can be made as close to $\pi /2 $ as desired, for $t$ large. 

Fix $L \geq \delta  $ and let $D (L)$ be the disk bounded by $\left(\set{0} \times [0,s(0)]\right) \cup \tilde \gamma  ^+ \left( [0, L]\right) \cup \left( \set{ L} \times [0,s (L)] \right) \cup \left([0, L] \times \set{0} \right)$, then by the Gauss-Bonnet Theorem, we get 
$$ \int _{D(L)} \tilde K + \theta   + \alpha  (L)= \pi ,$$where $\theta  $ and $\alpha  (L)$ are the exterior angles at $(0 ,s(0))$ and $( L , s (L))$, respectively, and $\tilde K$ the Gauss curvature in $\left( \r \times [-\epsilon , \epsilon] , \tilde g\right)$  (cf. Figure \ref{Geod2}). Therefore, using that $\tilde K \geq A > 0$, we obtain
$$ 0 <  \int _{D (\delta)} \tilde K  \leq \pi - \theta  - \alpha  (L)  \leq \pi /2 - \alpha  (L) \to 0,$$as $L \to + \infty$; which is a contradiction.

\begin{figure}[htbp]
\begin{center}
\includegraphics[scale=0.4]{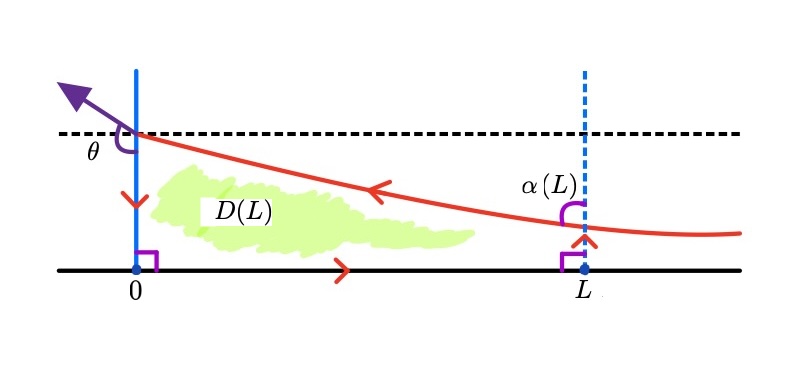}
\caption{Disk bounded by $\left(\set{0} \times [0,s(0)]\right) \cup \tilde \gamma  ^+ \left( [0, L]\right) \cup \left( \set{ L} \times [0,s (L)] \right) \cup \left([0, L] \times \set{0} \right)$}
\label{Geod2}
\end{center}
\end{figure} 
\end{proof}

We can extend this theorem to complete, non-compact, orientable surfaces $\amb$ of positive curvature $\Kb >0$. Suppose $\gamma$ is a complete, non-compact, embedded geodesic of $\amb$. It is not hard to show that there exists an $\epsilon >0$ such that the $\epsilon-$tube ${\rm Tub}_\epsilon (\gamma) $ is embedded in $\amb$. Our proof of Theorem A shows that if a geodesic $\tilde \gamma$ of $\amb$ intersects ${\rm Tub}_\epsilon (\gamma) $, then $\tilde \gamma$ intersects $\gamma$. The proof also shows that a geodesic ray on a strictly convex sphere is not embedded.

In \cite{RLanHRos96} the authors claim that Theorem A should extend to dimensions $n>2$: {\it When $\amb ^n$ is closed and with positive sectional curvatures, a complete embedded geodesic $\gamma \subset \amb$ must intersect every closed totally geodesic hypersurface of $\amb$.} However, the proof is not complete. We do not know if this holds for $n=3$.  



\section*{Acknowledgments}

The authors would like to thank Brian White and Joaqu\'{i}n P\'{e}rez for their numerous thoughtful suggestions and improvements to the paper.

The first author, Jos\'e M. Espinar, is partially supported by the Maria de Maeztu Excellence Unit IMAG, reference CEX2020-001105-M, funded by MCINN/AEI/10.13039/501100011033/CEX2020-001105-M; Spanish MIC Grant PID2020-118137GB-I00 and MIC-NextGenerationEU Grant 30.RP.23.00.04 CONSOLIDACION2022.

\end{document}